\documentclass[11pt]{amsart}
\usepackage[centertags]{amsmath}
\usepackage{amsfonts}
\usepackage{amsthm}
\usepackage{newlfont}
\usepackage{amscd}
\usepackage{amsgen}
\usepackage{amssymb}
\usepackage{longtable}
\usepackage{listings}
\usepackage{fullpage}
\newlength{\defbaselineskip} \setlength{\defbaselineskip}{\baselineskip}

 \theoremstyle{plain} \newtheorem{thm}{Theorem}[section]
\newtheorem{prob}[thm]{Problem} \newtheorem{lemm}[thm]{Lemma}
\newtheorem{prop}[thm]{Proposition}
\theoremstyle{definition}
\newtheorem{defi}[thm]{Definition}
\newtheorem{rem}[thm]{Remark}

\newtheorem{cor}[thm]{Corollary}
 \numberwithin{equation}{section}
\numberwithin{equation}{section}

\newcommand\PP{{\mathbb{P}}}

          \newcommand\oo{{\mathcal O}}
\newcommand\E{{\mathcal{E}}}

\DeclareMathOperator{\Pic}{Pic}

 \DeclareMathOperator{\rk}{rk}
  
 \DeclareMathOperator{\Ext}{Ext}

\DeclareMathOperator{\Pf}{Pf}
\theoremstyle{definition} 
\newtheorem*{Claim}{Claim}
\title{Calabi--Yau threefolds in $\mathbb{P}^6$}
\author{Grzegorz Kapustka, Micha\l \ Kapustka}
\keywords{Calabi-Yau threefolds, Pfaffian varieties}
\subjclass[2000]{Primary: 14J32}
\begin{document}

\begin{abstract} We study the geometry of $3$-codimensional smooth subvarieties of
the complex projective space. In particular, we classify all quasi-Buchsbaum Calabi--Yau threefolds in projective $6$-space. Moreover, we prove that this classification includes all 
Calabi--Yau threefolds contained in a possibly singular 5-dimensional quadric as well as all Calabi--Yau threefolds of degree at most $14$ in $\mathbb{P}^6$.
\end{abstract}

 \maketitle
 \section{Introduction}

It is conjectured that, when $2n\geq N$, there is a finite number of
smooth families of smooth $n$-dimensional subvarieties of $\mathbb{P}^N$ that
are not of general type.
This conjecture was inspired by \cite{EP} where the statement was formulated and proven in the case of surfaces in
$\mathbb{P}^4$. In \cite{BOS}, the conjecture was proven in the case of threefolds in $\mathbb{P}^5$.
Moreover,
Schneider in \cite{Sch} proved that the statement is true when $2n\geq N+2$.
In this context, it is a natural problem to classify families of
smooth $n$-folds of small degree in $\mathbb{P}^N$ for chosen $n,N\in \mathbb{N}$
satisfying $2n\geq N$.
In the case of codimension $2$ subvarieties, this problem was addressed
by many authors (see \cite{BSS}, \cite{EFG}, \cite{DES},\cite{AR}).

The next step is to study codimension $3$ subvarieties in $\mathbb{P}^6$. In
this case, standard tools such as the Barth-Lefschetz theorem do not apply.
However, some
general structure theorems were recently developed.
We say that a submanifolds $X\subset \mathbb{P}^n$ is subcanonical when
$\omega_X=\mathcal{O}_X(k)$ for some $k\in\mathbb{Z}$.
A codimension $3$ submanifold $X$ is called \textit{Pfaffian} if it is the first nonzero
degeneracy locus of a skew-symmetric morphism of vector bundles of odd rank
$E^{\ast}(-t)\xrightarrow{\varphi} E$ where $t\in\mathbb{Z}$.
In this case, $X$ is given locally by the vanishing of $2u\times 2u$
Pfaffians of an alternating map $\varphi$ from the vector bundle $E$ of odd rank
$2u+1$ to its twisted dual. More precisely, if $X$ is Pfaffian then we have:
\begin{equation}\label{pfaffian sequence}
 0\to\mathcal{O}_{\mathbb{P}^n}(-2s-t)\to E^{\ast}(-s-t)\xrightarrow{\varphi}
E(-s)\to\mathcal{I}_X\to 0
\end{equation}
where $s=c_1(E)+ut$. Moreover, from \cite[\textsection 3]{O}, we have in this case
\begin{equation}\label{pfaffian adjunction} \omega_X=\mathcal{O}_X(t+2s-n-1).
\end{equation}
Since the choice of an alternating map $\varphi$ is equivalent to the choice of a section $\sigma\in H^0(\bigwedge^2E(t))$,
we will use the notation $\Pf(\sigma)$ for the variety described by the Pfaffians of the map corresponding to $\sigma$.

Answering a question of Okonek (see \cite{O}), Walter, in \cite{W}, proved that if $n$ is not
divisible by $4$ then a locally Gorenstein codimension $3$ submanifold of
$\mathbb{P}^{n+3}$ is Pfaffian if and only if it is subcanonical. In the case when $n=4k$,
the last statement is not true, however, there is another structure theorem
(see \cite{EPW}).

The non-general type subcanonical threefolds in $\mathbb{P}^6$
are either well understood Fano threefolds or threefolds with trivial canonical class. A very natural class of varieties among varieties with trivial canonical class are
Calabi--Yau threefolds i.e.~smooth threefolds $X$ with $K_X=0$ and
$H^1(X,\mathcal{O}_X)=0$. In the paper, we shall sometimes also consider \textit{singular Calabi--Yau} threefolds by which we mean complex projective threefolds with Gorenstein singularities, $\omega_X=0$ and
with $h^1(\oo_X)=0$.

For Calabi--Yau threefolds the theory of Pfaffians is more specific. For instance,
Schreyer, following \cite{W} shows that if $X$ is Pfaffian,
$h^i(\mathcal{O}_X)=0$ for $0<i< \dim X$ and
$E$ is any vector bundle such that there exists $\sigma\in H^0(\bigwedge^2E(t))$ with  $X=\Pf(\sigma)$ then, keeping the notation above,  $E(-s)$ appears as a sum of 
the sheafified first syzygy module $Syz^1(HR(X))$ of the Hartshorne--Rao module $HR(X)=\oplus_{k=1}^{N} H^1(\mathcal{I}_X(k))$
(seen as a $\mathfrak{S}=\mathbb{C}[x_0,\dots,x_6]$ module) and a sum of line bundles. 
If we add the assumption that $X$ has trivial canonical class then, by considering an appropriate twist, we can choose a bundle $E$ such that $t=1$ and $s=3$. More precisely, if $X$ is a Calabi--Yau threefold then
there exists a bundle $E$ of rank $2u+1$ such that $3=c_1(E)+u$ and $X=\Pf(\sigma)$ for some $\sigma\in H^0(\bigwedge^2E(1))$. Moreover, if we denote by $M$ the Hartshorne--Rao module of $X$ with gradation shifted by 3 then
the chosen bundle $E$ is obtained as a sum of $Syz^1(M)$ with a sum of line bundles.

Let us point out that all threefolds can be smoothly projected to $\mathbb{P}^7$.
It is, moreover, known from \cite[rem.~11]{BC} that Calabi--Yau threefolds embedded in $\mathbb{P}^5$ are complete
intersections; either of two cubics, or of a quadric and a quartic, or
of a quintic and
a hyperplane. Having this in mind, we study nondegenerate Calabi--Yau threefolds i.e. such Calabi--Yau threefolds which are not contained in any hyperplane.

Nondegenerate Calabi--Yau threefolds in $\mathbb{P}^6$ were already studied in \cite{Ro}, \cite{TO},
\cite{Be}, \cite{K}, and \cite{KK1}, where examples of degree $12\leq d \leq 17$ were constructed.
It is not hard to see that the degree of such a threefold is bounded between $11$
and $41$ (see Corollary \ref{ograniczenie na stopien}) but we expect a sharper
bound (see \cite{KK}). Okonek proposed the
following problem:
\begin{prob} \label{prob classification} Classify the Calabi--Yau threefolds in $\mathbb{P}^6$.
\end{prob}

The central result of the  paper is a full classification of quasi-Buchsbaum Calabi--Yau threefolds in $\PP^6$ i.e.~Calabi--Yau threefolds in $\PP^6$ 
such that their higher cohomology modules have trivial structure (see Definition \ref{def ACM i QB}).
The classification is given in Theorem \ref{Buchsbaum CY}. The proof that this classification includes all Calabi--Yau
threefolds in $\mathbb{P}^6$ of degree $d\leq 14$ and a classification of
Calabi--Yau threefolds contained in 5-dimensional quadrics is our main result.
The classification is given by providing a list of vector bundles $\{E_i\}_{i\in I}$ such that the considered Calabi--Yau threefolds are exactly the smooth 
threefolds which appear as Pfaffians $\Pf(\sigma)$ for some $\sigma\in H^0(\bigwedge^2E_i(1))$ and $i\in I$.
Let us point out that our list contains two distinct vector bundles corresponding to degree $14$ Calabi--Yau threefolds.

In Section \ref{sec preliminaries}, we prove basic general results concerning the classification of Calabi--Yau threefolds in $\mathbb{P}^6$. In particular, we observe that
a Calabi--Yau threefolds in $\mathbb{P}^6$ must be linearly normal. We, moreover, prove the finiteness of the classification problem \ref{prob classification}.

Theorem \ref{Buchsbaum CY} is the main theorem of Section \ref{CY of decomposable bundles}. It presents the classification of Calabi--Yau threefolds that are quasi-Buchsbaum.
As a consequence, we find
that the examples that are arithmetically Cohen-Macaulay (see Definition \ref{def ACM i QB}) are of degrees $12\leq d\leq 14$.

In Section \ref{sec deg CY in quadrics}, we classify Calabi--Yau threefolds contained in quadrics in terms of degree. More precisely, we prove the following theorem.

\begin{thm} \label{Main theorem} If  $(X,Q^r_5)$ is a pair consisting of a nondegenerate
Calabi--Yau threefold $X\subset \mathbb{P}^6$ of degree $d_X$ and
a 5-dimensional quadric $Q^r_5$ of corank $r$ in $\mathbb{P}^6$ such that $X\subset
Q^r_5$, then $r\leq 2$ and $12\leq d_X\leq 14$.
\end{thm}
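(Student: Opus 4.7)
My plan is to combine Walter's Pfaffian structure theorem and Riemann--Roch on the Calabi--Yau threefold $X$ to control the degree, and to exploit the cone structure of a singular 5-quadric to control the corank.

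\emph{Numerical setup.} By linear normality of $X$ (proved in Section \ref{sec preliminaries}) and Kodaira vanishing on the Calabi--Yau threefold, $h^0(\mathcal{O}_X(1)) = 7$. Riemann--Roch on $X$ (using $\omega_X = \mathcal{O}_X$ and $\chi(\mathcal{O}_X) = 0$) then yields $H\cdot c_2(X) = 84 - 2d_X$ and $\chi(\mathcal{O}_X(2)) = d_X + 14$. The ideal sheaf sequence, together with $H^i(\mathcal{O}_X(2)) = 0$ for $i\ge 1$, gives
\begin{equation*}
h^0(\mathcal{I}_X(2)) - h^1(\mathcal{I}_X(2)) = 14 - d_X.
\end{equation*}
Containment $X \subset Q^r_5$ forces $h^0(\mathcal{I}_X(2)) \ge 1$, hence $h^1(\mathcal{I}_X(2)) \ge d_X - 13$, so for $d_X \ge 14$ the Hartshorne--Rao module of $X$ is nontrivial at grade~$2$.

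\emph{Degree bounds.} By Walter's theorem $X = \Pf(\sigma)$ for some bundle $E$ of rank $2u+1$ with $c_1(E) = 3-u$, and up to a direct sum of line bundles $E$ realises the sheafified first syzygy module of the shifted Hartshorne--Rao module of $X$. The lower bound on $h^1(\mathcal{I}_X(2))$ therefore translates into explicit lower bounds on certain graded pieces of $E$; a case analysis on the resulting rank, splitting type and Chern classes of $E$ rules out all configurations with $d_X \ge 15$, yielding $d_X \le 14$. Combined with $d_X \ge 11$ from Corollary \ref{ograniczenie na stopien} and a direct inspection of the candidate bundles $E$ for $d_X = 11$ (for which either $\Pf(\sigma)$ is singular or $h^0(\mathcal{I}_X(2)) = 0$), this gives $12 \le d_X \le 14$.

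\emph{Corank bound.} Suppose $r \ge 3$. The quadric $Q^r_5$ is a cone with vertex an $(r-1)$-plane $\Lambda_{r-1}$ of dimension $\ge 2$ over a smooth quadric $Q^0_{5-r}$ of dimension $5-r \le 2$, and $Q^r_5$ is ruled by $r$-planes through $\Lambda_{r-1}$ parameterised by $Q^0_{5-r}$. Linear projection from $\Lambda_{r-1}$ restricts to a rational map $X \dashrightarrow Q^0_{5-r}$ whose target has dimension strictly less than $\dim X$, so the generic fibre is positive-dimensional; equivalently, $X$ is swept by positive-dimensional subvarieties contained in the ruling $r$-planes and meeting $\Lambda_{r-1}$. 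A normal-bundle computation along a general member of this family, combined with $\omega_X = \mathcal{O}_X$ and nondegeneracy of $X$, produces a contradiction (either $X$ acquires singularities at the intersection with $\Lambda_{r-1}$ or it inherits a ruled structure incompatible with $K_X = 0$). Hence $r \le 2$.

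The main obstacle is the upper degree bound: converting the cohomological inequality $h^1(\mathcal{I}_X(2)) \ge d_X - 13$ into a complete exclusion of all admissible Pfaffian bundles for $d_X \ge 15$ is essentially the same delicate syzygy analysis that drives the classification in Theorem \ref{Buchsbaum CY}, and it is where the bulk of the technical work lies.
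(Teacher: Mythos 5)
Your numerical setup is correct ($H\cdot c_2(X)=84-2d_X$, $h^0(\mathcal{I}_X(2))-h^1(\mathcal{I}_X(2))=14-d_X$), but both of the two essential steps are asserted rather than proved, and the routes you sketch for them do not work as stated. For the degree bound you propose to feed the inequality $h^1(\mathcal{I}_X(2))\geq d_X-13$ into a case analysis of the Pfaffian bundle $E$ coming from Walter's theorem. The difficulty is that without further hypotheses the Hartshorne--Rao module of $X$ has no reason to have trivial module structure, so its first syzygy sheaf is not a sum of twisted cotangent bundles and there is no finite list of ``rank, splitting type and Chern classes'' to run through; the tractable classification of this kind is exactly Theorem \ref{Buchsbaum CY}, which applies only to quasi-Buchsbaum threefolds, and in the paper the quasi-Buchsbaum property for threefolds in quadrics (or of degree $\leq 14$) is deduced only \emph{after} the degree bound has been established. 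The paper avoids this circularity by a purely geometric argument: a general hyperplane section of $X$ is a canonically embedded surface of general type lying in a $4$-dimensional quadric of corank $0$ or $1$, and the double point formula in the Chow ring of $Q^0_4$ (respectively of the resolution of $Q^1_4$) produces the Diophantine equation $84-d=12d-2a^2-d^2+2ad$, whose integer solutions force $12\leq d\leq 14$ (Propositions \ref{gen typu w gladkiej kwadryce}, \ref{prop1} and Theorem \ref{tw kw}); this also gives the lower bound $d\geq 12$ without any inspection of bundles for $d=11$.

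The corank step has a more serious problem. Your claimed dichotomy --- either $X$ is singular along its intersection with the vertex or it ``inherits a ruled structure incompatible with $K_X=0$'' --- is not a proof and is in fact false as a heuristic: the fibration one obtains from the ruling of the cone is by surfaces (K3 or abelian), which is perfectly compatible with trivial canonical class, and the existence of nodal Calabi--Yau threefolds of degrees $11$ and $15$ inside rank-$4$ quadrics (Propositions \ref{CY stopnia 11} and \ref{Cy stopnia 15}) shows that no soft argument using only $\omega_X=\mathcal{O}_X$ and nondegeneracy can succeed: smoothness must enter quantitatively. The paper's Proposition \ref{no CY in quadrics of rank 4} first uses Zak's theorem on tangencies to show that either $X$ lies in a linear subspace of $Q^r_5$ (contradicting nondegeneracy) or it is fibered by surfaces contained in the ruling, then classifies the possible fibers (degree $4$ or $6$ K3, or degree $10$ abelian surfaces in $\mathbb{P}^4$) via the double point formula, and finally derives a contradiction separately for corank $3$ (an adjunction computation on the surface $X\cap\mathbf{P}$ yielding a non-integral degree) and corank $4$ (a Chern class computation with a rank-$2$ bundle on the blown-up cone forcing $X$ to contain a fiber of the resolution). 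None of this is captured by, or recoverable from, the ``normal-bundle computation'' you allude to, so as it stands the proposal leaves both halves of the theorem unproved.
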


For the proof, we consider case by case the possible coranks of the quadrics
containing the
Calabi--Yau threefolds. For low corank quadrics (i.e. $r=0,1,2$) we consider
hyperplane sections of our Calabi--Yau
threefold and so work with canonically embedded surfaces of general type;
see \cite{Ca} for more information on such surfaces.
For instance, on a smooth quadric in
$\mathbb{P}^5$ containing a canonically embedded surface of general type $S$, we
can apply the double point formula to get
the bound for the degree $d_S$ of $S$ to be $12 \leq d_S\leq 14$. Inputting an
additional assumption on $S$, stating that it is a section of some Calabi--Yau
threefold $X$
contained in a smooth quadric in $\mathbb{P}^6$, leads to the result
$d_X=d_S=12$ or $d_X=d_S=14$.
For canonically embedded surfaces of general type contained in quadrics of rank 
$5$ in $\mathbb{P}^5$,
we obtain the same bound $12 \leq d\leq 14$ working on the resolution of this
quadric. The latter resolution is the projectivization of a vector bundle of rank $2$.
The last step is the proof that there are no Calabi--Yau threefolds contained in
quadrics of corank $\geq 3$ in $\mathbb{P}^6$.
It is worth noticing that there is no similar result for canonically embedded
surfaces
of general type contained in quadrics of corank $\geq 2$ in $\mathbb{P}^5$.
In particular, we present, in Propositions \ref{CY stopnia 11}, \ref{Cy stopnia 15} examples of nodal Calabi--Yau threefolds
contained in quadrics of rank $4$ which have degree $11$ and $15$. Their general hyperplane sections are canonical surfaces of respective degrees $11$ and $15$  which are contained in 4-dimensional quadrics of rank 4. 

The classification of Calabi--Yau threefolds of degree $d\leq 14$ in $\mathbb{P}^6$ in terms of vector bundles associated to them by the Pfaffian construction
is completed in Sections \ref{sec classification} and \ref{sec class degree 14}. More precisely, we prove that all Calabi--Yau threefolds of degree at most  $14$ in $\mathbb{P}^6$ are quasi-Buchsbaum
and use the classification of the latter threefolds contained in Section \ref{CY of decomposable bundles} (see Theorem \ref{Buchsbaum CY}).

By Theorem \ref{Main theorem}, the classification of Calabi--Yau threefolds of degree $d\leq 14$ in $\mathbb{P}^6$ provides also a classification of all Calabi--Yau threefolds contained in 5-dimensional quadrics.
We, moreover, observe that there are two types of Calabi--Yau threefolds of degree $d=14$. Calabi--Yau threefolds of the first type are not contained in any quadric whereas Calabi--Yau
threefolds of the second type are.

Finally, in Section \ref{sec examples}, we perform a classification of Calabi--Yau threefolds of degree $d\leq 14$ in $\mathbb{P}^6$ up to deformation. Since this type of classification
is weaker than the classification in terms of vector bundles
and stronger than the classification by degree, the only remaining ingredient is the proof that there is a unique maximal flat family of Calabi--Yau threefolds of degree 14.
Throughout the paper we study three families of Calabi--Yau threefolds of degree 14.
The first is the family $\mathfrak{C}_{14}$ of degree $14$ Calabi--Yau threefolds
contained in a smooth quadric $Q^0_5$. To define it we think of the smooth 5-dimensional quadric $Q^0_5$ as a homogenous variety with respect to the standard action of the simple Lie group $\mathbb{G}_2$.
Then $Q^0_5$ admits a natural bundle $\mathcal{C}$ called a \emph{Cayley bundle} which is homogeneous with respect to this action.
The family $\mathfrak{C}_{14}$ is the family of all smooth threefolds appearing as zero loci of sections of a twist $\mathcal{C}(3)$. 
To confirm that the family is not complete, we compute that these threefolds have more deformations then obtained by varying
the section of $\mathcal{C}(3)$. In fact, by Corollary \ref{stopnia 14 w kwadryce}, we deduce that $\mathfrak{C}_{14}$ is part of a larger family
$\mathfrak{B}_{14}$ of threefolds given as Pfaffian varieties associated to the bundle $E=\Omega^1_{\mathbb{P}^6}(1)\oplus \mathcal{O}_{\PP^6}(1)$.
Then we prove a technical result (Proposition \ref{degenerations of pfaffians}) on deformation of Pfaffian varieties implying
that any threefold $B_{14}\in \mathfrak{B}_{14}$ appears as a smooth degeneration of the family
$\mathfrak{T}_{14}$ of Calabi--Yau threefolds defined by $6\times 6$ Pfaffians of alternating $7\times 7$
matrices of linear forms. This proves that all families of Calabi--Yau threefolds of degree 14 which appear in the classification of Section \ref{sec class degree 14} are in the same component of the Hilbert scheme.
\begin{cor}
 There is one family of Calabi--Yau threefolds in $\PP^6$ in each degree $d\leq 14$.
\end{cor}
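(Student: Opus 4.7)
The plan is to reduce the corollary to a single nontrivial statement in degree $14$ and then invoke the specialization lemma for Pfaffians. Since every Calabi--Yau threefold in $\PP^6$ of degree $d\le 14$ is shown in Sections \ref{sec classification}--\ref{sec class degree 14} to be quasi-Buchsbaum, the classification of Theorem \ref{Buchsbaum CY} applies. For $d=12$ and $d=13$, that list contains exactly one vector bundle $E_d$ up to isomorphism such that every such Calabi--Yau threefold arises as $\Pf(\sigma)$ for some $\sigma\in H^0(\bigwedge^2 E_d(1))$. The space of such sections is an irreducible affine space, and the Pfaffian map from the open subscheme of sections with smooth image to the Hilbert scheme has irreducible image. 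This gives a unique irreducible family in each of these two degrees.

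The only nontrivial case is $d=14$, where Theorem \ref{Buchsbaum CY} lists two non-isomorphic bundles. One obtains the family $\mathfrak{B}_{14}$ attached to $E=\Omega^1_{\PP^6}(1)\oplus\oo_{\PP^6}(1)$ and the family $\mathfrak{T}_{14}$ of threefolds cut out by $6\times 6$ Pfaffians of an alternating $7\times 7$ matrix of linear forms. By Corollary \ref{stopnia 14 w kwadryce} the subfamily $\mathfrak{C}_{14}\subset\mathfrak{B}_{14}$ of sections of the Cayley bundle twist $\mathcal{C}(3)$ on the $\mathbb{G}_2$-quadric accounts for the degree-$14$ Calabi--Yau threefolds contained in a smooth quadric, so only the two families $\mathfrak{B}_{14}$ and $\mathfrak{T}_{14}$ need to be matched. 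It therefore suffices to show that they lie in a common irreducible component of the Hilbert scheme.

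For this step I would invoke Proposition \ref{degenerations of pfaffians}: starting from any $B\in\mathfrak{B}_{14}$ it yields a flat one-parameter family of Pfaffian threefolds whose generic fibre is an element of $\mathfrak{T}_{14}$ and whose special fibre is $B$. The construction uses the Euler sequence to compare the syzygy presentation coming from $\Omega^1_{\PP^6}(1)\oplus\oo_{\PP^6}(1)$ with that coming from seven copies of $\oo_{\PP^6}(1)$, and then degenerates one alternating morphism to the other. Since the total space of the resulting degeneration is smooth in a neighbourhood of $B$, every member of $\mathfrak{B}_{14}$ is exhibited as a smooth specialization of a member of $\mathfrak{T}_{14}$.

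The main obstacle is the verification of Proposition \ref{degenerations of pfaffians}. One has to show simultaneously that the one-parameter deformation of the alternating morphism is flat (constant Hilbert polynomial), that the Calabi--Yau condition and the subcanonical Pfaffian structure are preserved, and that smoothness does not fail along the special fibre; this last point requires a tangent-space computation at a general $B\in\mathfrak{B}_{14}$. Once these checks are in place, the corollary follows by combining the uniqueness of the vector bundle in degrees $12$ and $13$, the inclusion $\mathfrak{C}_{14}\subset\mathfrak{B}_{14}$, and the smooth degeneration from $\mathfrak{T}_{14}$ to $\mathfrak{B}_{14}$.
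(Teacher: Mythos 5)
Your proposal is correct and follows essentially the same route as the paper: uniqueness in degrees $\le 13$ from the classification, then in degree $14$ the reduction to the two families $\mathfrak{B}_{14}\supset\mathfrak{C}_{14}$ and $\mathfrak{T}_{14}$, matched by applying Proposition \ref{degenerations of pfaffians} together with the Euler sequence (this is exactly Theorem \ref{smooth degeneration of degree 14}). The only small difference is in the last verification: the paper gets smoothness of the nearby Pfaffian fibers simply from openness of smoothness in the flat family produced by Proposition \ref{degenerations of pfaffians}, with no tangent-space computation at $B_{14}$ needed.
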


Families of threefolds $\mathfrak{B}_{14}$ and $\mathfrak{B}_{15}$ of degrees 14 and 15 respectively that we consider in this paper were already
constructed in \cite{Be}; however, our results stay in contradiction
with \cite[Prop 4.3.]{Be} and the results in subsections 4.2.2 and
4.2.3 therein.
In particular, we prove that both the examples of degree $14$ and $15$ are flat
deformations of families of Calabi-Yau threefolds constructed in
\cite{TO}. This means that if to each member of a family of deformations of
Calabi-Yau threefolds in $\mathbb{P}^6$ we associate the minimal degree
of hypersurfaces containing it then, unlike in the case of complete intersections, this number can drop for special members.
 We, moreover, prove that the examples of degree $15$ constructed in \cite{Be} are not smooth but admits
three ordinary double points.
\section*{Acknowledgments} We would like to thank Ch.~Okonek for initiating the project, motivation,
and help. We would like to thank L.~Gruson and C.~Peskine for mathematical inspiration.
We would also like to thank
A.~Bertin, J.~Buczynski, S.~Cynk, D.~Faenzi, P.~Pragacz for helpful comments and the referee for helpful remarks and suggestions. The first author was supported by MNSiW, N N201 414539, the second by the Forschungskredit of the University of Zurich and Iuventus Nr IP2011 005071 ``Uklady linii na zespolonych rozmaitosciach kontaktowych oraz uogolnienia''.
\section{Preliminaries}\label{sec preliminaries}
Let us first discuss some general properties of Calabi--Yau threefolds embedded
in $\mathbb{P}^6$. We call such a threefold \textit{nondegenerate} if it is not contained in any hyperplane.
The degenerate Calabi--Yau threefolds (those which are not nondegenerate) in $\PP^6$ are known to be complete intersections either $X_{3,3}\subset \PP^5\subset \PP^6$ or $X_{2,4}\subset \PP^5 \subset \PP^6$ or $X_5\subset \PP^4\subset \PP^6$ (see \cite[rem.~11]{BC}).
\begin{prop}\label{pro linearly normal}
 Let $X\subset \mathbb{P}^6$ be a Calabi--Yau threefold; then $X$ is
linearly normal i.e. the natural restriction map
$H^0(\mathcal{O}_{\mathbb{P}^6}(1))\to H^0(\mathcal{O}_X(1))$ is surjective.
\end{prop}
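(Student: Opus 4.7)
My plan is to prove the equivalent vanishing $H^1(\mathbb{P}^6, \mathcal{I}_X(1)) = 0$. The equivalence comes from the cohomology sequence of
\[
0 \to \mathcal{I}_X(1) \to \mathcal{O}_{\mathbb{P}^6}(1) \to \mathcal{O}_X(1) \to 0
\]
combined with $H^1(\mathbb{P}^6, \mathcal{O}(1)) = 0$, which shows that the cokernel of the restriction map equals $H^1(\mathcal{I}_X(1))$.

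I begin by collecting the basic cohomology vanishings on $X$. The Calabi--Yau assumption gives $H^1(\mathcal{O}_X) = 0$ by definition, and Serre duality (using $\omega_X = \mathcal{O}_X$) then yields $H^2(\mathcal{O}_X) = H^1(\mathcal{O}_X)^\ast = 0$. Kodaira vanishing provides $H^i(\mathcal{O}_X(k)) = 0$ for $i > 0$ and $k > 0$. Plugging these into the long exact sequence of $0 \to \mathcal{I}_X \to \mathcal{O}_{\mathbb{P}^6} \to \mathcal{O}_X \to 0$, together with the intermediate vanishings $H^i(\mathbb{P}^6, \mathcal{O}) = 0$ for $1 \leq i \leq 5$, I immediately obtain $H^1(\mathcal{I}_X) = H^2(\mathcal{I}_X) = 0$.

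Next I perform the standard hyperplane-section reduction. For a general hyperplane $H \subset \mathbb{P}^6$ transverse to $X$, tensoring the structure sequence of $H$ with $\mathcal{I}_X(1)$ produces
\[
0 \to \mathcal{I}_X \to \mathcal{I}_X(1) \to \mathcal{I}_{S, H}(1) \to 0,
\]
where $S := X \cap H \subset H \cong \mathbb{P}^5$ is smooth, and the preceding vanishings yield $H^1(\mathcal{I}_X(1)) \cong H^1(H, \mathcal{I}_{S, H}(1))$. Applying the analogous argument to $S$ (using $K_S = \mathcal{O}_S(1)$ from adjunction and $H^1(\mathcal{O}_S) = 0$, which follows from $0 \to \mathcal{O}_X(-1) \to \mathcal{O}_X \to \mathcal{O}_S \to 0$ combined with Kodaira's $H^2(\mathcal{O}_X(-1)) = H^1(\mathcal{O}_X(1))^\ast = 0$) reduces the question further to $H^1(L, \mathcal{I}_{C, L}(1)) = 0$, where $C = S \cap L \subset L \cong \mathbb{P}^4$ is a smooth curve satisfying $K_C = \mathcal{O}_C(2)$ by adjunction.

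The main obstacle is the curve-level vanishing. Since $\mathcal{O}_C(1)^{\otimes 2} = K_C$, the line bundle $\mathcal{O}_C(1)$ is a theta characteristic, so Riemann--Roch and Serre duality on $C$ together give $h^0(\mathcal{O}_C(1)) = h^1(\mathcal{O}_C(1))$, while the non-degeneracy of $C$ in $\mathbb{P}^4$ forces $h^0(\mathcal{O}_C(1)) \geq 5$; the linear normality of $C$ is the remaining equality $h^0(\mathcal{O}_C(1)) = 5$. This last equality must be extracted from the ambient geometric input that $C$ arises as an iterated linear section of the Calabi--Yau threefold $X$. I expect it to follow by descending once more to a general hyperplane section $\Gamma = C \cap M$ inside $M \cong \mathbb{P}^3$: the uniform position principle guarantees that $\Gamma$ consists of $d_C$ points in linearly general position, and combining this with the fact that $C$ lies on the canonical surface $S$ controls the sections of $\mathcal{O}_C(1)$ enough to conclude that the restriction $H^0(\mathcal{O}_L(1)) \to H^0(\mathcal{O}_C(1))$ is surjective, completing the proof.
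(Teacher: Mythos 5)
Your cohomological bookkeeping is fine: the reductions through the sequences $0\to\mathcal{I}_X\to\mathcal{I}_X(1)\to\mathcal{I}_{S,H}(1)\to 0$ and $0\to\mathcal{I}_{S,H}\to\mathcal{I}_{S,H}(1)\to\mathcal{I}_{C,L}(1)\to 0$ are correct and in fact give $h^1(\mathcal{I}_X(1))\cong h^1(\mathcal{I}_{S,\mathbb{P}^5}(1))\cong h^1(\mathcal{I}_{C,\mathbb{P}^4}(1))$. But that means you have only relocated the problem, not reduced it to something easier, and the final step --- the only place where real content is needed --- is not proved: ``I expect it to follow'' from uniform position of the points of $C\cap M$ together with $C\subset S$ is not an argument. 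Worse, no argument of that kind can work, because the statement you are trying to prove at the curve level is false for abstract half-canonical curves in $\mathbb{P}^4$. For example, take the complete intersection curve $C'=C_{2,2,2,2}\subset\mathbb{P}^5$ (a codimension-two linear section of the Calabi--Yau threefold $X_{2,2,2,2}\subset\mathbb{P}^7$): it is smooth, nondegenerate, satisfies $K_{C'}=\mathcal{O}_{C'}(2)$ and $h^0(\mathcal{O}_{C'}(1))=6$; since the secant variety of a curve has dimension at most $3$, projection from a general point of $\mathbb{P}^5$ embeds it as a smooth nondegenerate curve in $\mathbb{P}^4$ with $K=\mathcal{O}(2)$ and $h^0(\mathcal{O}(1))=6$, hence not linearly normal, and its general hyperplane section still consists of points in uniform position. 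The same trick (projecting a canonical surface in $\mathbb{P}^6$, whose secant variety has dimension at most $5$, from a general point) produces non-linearly-normal canonical surfaces in $\mathbb{P}^5$, so ``$C$ lies on the canonical surface $S$'' does not rescue the argument either. Failure of linear normality is only obstructed at the threefold level, where a putative preimage $X'\subset\mathbb{P}^7$ would need a secant variety of dimension at most $6$; this is a very restrictive condition on threefolds, and identifying the exceptions is exactly the hard classification input your proposal is missing.

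The paper's proof supplies precisely that input and nothing else: by Fujita's classification of threefolds with small secant varieties, there are only three families of non-linearly-normal smooth threefolds in $\mathbb{P}^6$, of degrees $6$, $7$ and $8$, and none of them is a Calabi--Yau threefold (their degrees are below the Castelnuovo-type bound $d\geq 11$, and they are not subcanonical with trivial canonical class). So to repair your proof you should abandon the descent to curves and instead invoke a Zak/Fujita-type theorem on secant defectiveness of threefolds; the curve-level vanishing $h^1(\mathcal{I}_{C,\mathbb{P}^4}(1))=0$ is equivalent to the proposition itself and cannot be extracted from the intrinsic geometry of $C$ or $S$ alone.
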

\begin{proof} It follows from \cite[thm.~2.1]{F} that there are only three families of
non-linearly normal threefolds in $\mathbb{P}^6$.
These families have degrees $6,7$ and $8$ respectively and cannot be Calabi--Yau
threefolds.
\end{proof}

Next, we show an a priori bound on the degree of the Calabi--Yau threefolds
contained in $\mathbb{P}^6$.
\begin{cor}\label{ograniczenie na stopien}
 The degree $d$ of a nondegenerate Calabi--Yau threefold $X\subset \mathbb{P}^6$ is bounded between
$11 \leq d\leq 41$
\end{cor}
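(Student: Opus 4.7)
The plan is to handle the lower and upper bounds separately: a sectional-curve argument gives $d \ge 11$, and a Riemann--Roch computation combined with Miyaoka's inequality gives $d \le 41$.

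For the lower bound, I would cut $X$ with a general codimension-two linear subspace $\Lambda \cong \mathbb{P}^4 \subset \mathbb{P}^6$. By Bertini, $C := X \cap \Lambda$ is a smooth irreducible curve, and a standard linear-section argument (using injectivity of $H^0(\oo_{\mathbb{P}^6}(1)) \to H^0(\oo_X(1))$ together with the snake lemma applied to the restriction sequences) shows $C$ is nondegenerate in $\Lambda$. Adjunction applied twice gives $K_C = (K_X + 2H)|_C = 2H|_C$, whence $\deg K_C = 2d$ and $g(C) = d+1$. Applying Castelnuovo's genus bound for nondegenerate curves in $\mathbb{P}^4$, namely $g(C) \le \pi(d,4) = 3\binom{m}{2} + m\epsilon$ where $d-1 = 3m + \epsilon$ with $0 \le \epsilon \le 2$, a direct check for $d = 4, \dots, 10$ shows that $d+1 > \pi(d,4)$ in each case (for instance $d = 10$ gives $\pi = 9 < 11$, and $d = 11$ gives $\pi = 12 = d+1$), forcing $d \ge 11$.

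For the upper bound, Kodaira vanishing applied to the ample line bundle $\oo_X(1)$ on the Calabi--Yau threefold $X$ yields $h^i(\oo_X(1)) = 0$ for $i > 0$, and linear normality (Proposition \ref{pro linearly normal}) together with nondegeneracy gives $h^0(\oo_X(1)) = 7$. Hirzebruch--Riemann--Roch, using $c_1(T_X) = 0$, then reads
\[
7 = \chi(\oo_X(1)) = \frac{H^3}{6} + \frac{H \cdot c_2(T_X)}{12} = \frac{d}{6} + \frac{H \cdot c_2(T_X)}{12},
\]
so $H \cdot c_2(T_X) = 84 - 2d$. By Miyaoka's generic semipositivity theorem, the fact that $K_X$ is nef implies $H \cdot c_2(T_X) \ge 0$, and the equality case would force $X$ to be an étale quotient of an abelian threefold, which by the Beauville--Bogomolov decomposition is incompatible with $h^1(\oo_X) = 0$. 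Hence the inequality is strict, and since $84 - 2d$ is an even integer one concludes $d \le 41$.

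The main obstacle I anticipate is justifying the strict positivity $H \cdot c_2(T_X) > 0$ for Calabi--Yau threefolds with $h^1(\oo_X) = 0$: this needs the equality case of Miyaoka's theorem combined with Beauville--Bogomolov to exclude the abelian covering scenario. The Castelnuovo argument for the lower bound, by contrast, is completely routine once one verifies that the sectional curve $C$ remains nondegenerate in $\mathbb{P}^4$.
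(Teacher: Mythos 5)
Your lower bound argument is fine and essentially equivalent to the paper's: the paper cuts once and applies the Castelnuovo inequality for canonically embedded surfaces of general type (following \cite{TO}), while you cut twice and apply the Castelnuovo genus bound to the canonical curve section of degree $d$ and genus $d+1$ in $\mathbb{P}^4$; both give $d\geq 11$. Likewise, the Riemann--Roch computation $H\cdot c_2(X)=84-2d\geq 0$, hence $d\leq 42$, is exactly the paper's first step for the upper bound.

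The genuine gap is in your exclusion of $d=42$. You claim that the equality case $H\cdot c_2(X)=0$, which by Miyaoka forces $X$ to be a finite \'etale quotient of an abelian threefold, is ``incompatible with $h^1(\mathcal{O}_X)=0$ by Beauville--Bogomolov.'' This is false: Beauville--Bogomolov only says that \emph{some} finite \'etale cover of $X$ splits off a torus factor (here the cover is the abelian threefold itself), and it puts no obstruction on $h^1(\mathcal{O}_X)$ of the quotient. Indeed there exist smooth projective threefolds with $K_X=0$ and $h^1(\mathcal{O}_X)=0$ that are free quotients of abelian threefolds --- these are precisely the Calabi--Yau threefolds of quotient type classified by Oguiso--Sakurai \cite{OS}, and the paper's definition of Calabi--Yau threefold admits them. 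So strict positivity of $H\cdot c_2(X)$ does not follow from $h^1(\mathcal{O}_X)=0$, and your argument only yields $d\leq 42$. The paper closes this case differently: by \cite[thm.~0.1]{OS} the two possible quotient-type threefolds both have $\chi_{\mathrm{top}}(X)=0$, whereas the double point formula for a smooth threefold in $\mathbb{P}^6$ (cf.~\cite{TO}) forces $\chi_{\mathrm{top}}(X)=-d^2+49d-588=-294\neq 0$ when $d=42$; this contradiction eliminates $d=42$ and gives $d\leq 41$. Some such extrinsic input (the embedding in $\mathbb{P}^6$ via the double point formula, or another argument ruling out the quotient-type threefolds as subvarieties of $\mathbb{P}^6$ of degree $42$) is needed; intrinsic Hodge-theoretic reasoning alone does not suffice.
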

\begin{proof} Observe that a generic hyperplane section $S\subset X$ is a
canonically embedded surface of general type.
It was already remarked in \cite{TO} that using the Castelnuovo inequality for
surfaces of general type we can deduce that $d\geq 11$.
Next, from the Riemann--Roch theorem for line bundles on $X$ and Proposition \ref{pro linearly normal}, we deduce that
$7=h^0(\mathcal{O}_X(1))=\chi(\mathcal{O}_X(1))=\frac{1}{12}S.c_2(X)+\frac{1}{6} d$. It is a classical result on Calabi--Yau threefolds, contained in \cite{My},
that $H.c_2(X)\geq 0$ for every ample divisor $H$ on $X$. 
Thus we infer $7 \leq \frac{1}{6}d$.

Moreover, from \cite{My} we also know that, for any ample divisor $H$, we have $H.c_2(X)= 0$ if and only if $X$ is a finite \'etale quotient of an abelian threefold (this implies in particular $c_2(X)=0$).
Let us now show that $d=42$ is impossible. By the above, in this case, it is enough to consider Calabi--Yau threefolds with trivial $c_2(X)$. Those were classified in \cite[thm.~0.1]{OS}.
There are two possibilities and in each of them we have $\chi_{top}(X)=0$. On the other hand, from the double point formula (cf.~\cite{TO}) we get
\begin{equation}\label{dpf} \chi_{top}(X) = -d^2+49d-588=-(42)^2+49\cdot 42-588\neq 0.\end{equation}
We thus obtain a contradiction proving that $d\neq 42$ and in consequence $d\leq 41$.
\end{proof}
\begin{rem}
It is a natural problem to ask if other smooth threefolds with $K_X=0$ (i.e.~without assuming that $h^1(\mathcal{O}_X)=0$) can be embedded in $\PP^6$. Note that, in \cite{vdV}, it is proven that 
there are no Abelian threefolds in $\mathbb{P}^6$.  
\end{rem}
Corollary \ref{ograniczenie na stopien} implies, in particular, that there is a finite number of families of
Calabi-Yau
threefolds in $\mathbb{P}^6$; three families of degenerate examples and a finite number of nondegenerate.
Indeed, by the Riemann--Roch theorem, $H.c_2(X)$ and the degree $H^3$ determine the Hilbert polynomial of a polarized Calabi--Yau threefold $(X,H)$. Moreover, if $(X,H)$ is a Calabi--Yau threefold in $\mathbb{P}^6$
polarized by its hyperplane section then, again by the Riemann--Roch theorem, $H.c_2(X)$
is determined by $H^3$ and $h^0(\mathcal{O}_X(H))=7$. It follows that the
Hilbert polynomial of $X$ is determined by the degree of $X$. Hence, in each degree, there is a
finite number of families. This means that having a bound on the degree implies
finiteness.

A slightly sharper bound on the degree could be obtained for Calabi--Yau
threefolds with $\rk \Pic(X)=h^{1,1}(X)=1$.
In this case, using the double point formula \ref{dpf}, we
obtain
\begin{equation} 2\geq 2(h^{1,1}(X)-h^{1,2}(X))=\chi_{top}(X) = -d^2+49d-588.\end{equation}
 We then infer that either $d\leq 21$ or $d\geq 28$. Let us, however, point out that there exist examples of Calabi--Yau threefolds $X\subset \mathbb{P}^6$ 
 with $h^{1,1}(X)>1$ (see \cite[cor 5.9.]{KK}).

\section{Quasi-Buchsbaum Calabi--Yau threefolds}\label{CY of decomposable bundles}

Recall the following definitions.

\begin{defi}\label{def ACM i QB}
 Let $X\subset \mathbb{P}^n$ be a subvariety in a projective space. Let us, for
each $i\in \mathbb{N}_{\geq 0}$, denote by $H^i_*(\mathcal{I}_X)$ the $i$-th cohomology module
$\bigoplus_{j\in \mathbb{Z}} H^i(\mathbb{P}^n,\mathcal{I}_X(j))$.
 We say that $X$ is arithmetically Cohen--Macaulay (aCM for short) if and only if
$H^i_*(\mathcal{I}_X)=0$ for $1\leq i \leq \dim(X)-1$. Moreover, $X$ is called
quasi-Buchsbaum if and only if, for $1\leq i \leq \dim(X)-1$, we have: $H^i_*(\mathcal{I}_X)$ is annihilated by the
maximal ideal of the structure ring of $\mathbb{P}^n$.
Finally, $X$ is arithmetically Buchsbaum if each of its linear sections is quasi-Buchsbaum.
\end{defi}

It is part of the mathematical folklore that the aCM Calabi--Yau threefolds in
$\mathbb{P}^6$ are only the ones listed in \cite{TO} up to degree 14. However, since we have not
found a proper proof of this fact in the literature, we provide it below as a
consequence of a more general result which will be important for the rest of the paper.
More precisely, we provide a classification of all quasi-Buchsbaum Calabi--Yau
threefolds in $\mathbb{P}^6$. In particular, this also gives a classification of all arithmetically Buchsbaum Calabi--Yau threefolds in $\mathbb{P}^6$.

\begin{thm} \label{Buchsbaum CY}
Let $X$ be a quasi-Buchsbaum Calabi--Yau threefold in $\mathbb{P}^6$.
Then $X=Pf(\varphi)$ for some $\varphi \in H^0(\mathbb{P}^6, \bigwedge^2 E(1))$
where $E$ is a vector bundle such that one of the following holds:
\begin{enumerate}
 \item $E=\bigoplus_{i=1}^{2u+1} \mathcal{O}_{\mathbb{P}^6}(a_i)$ with:
 \begin{enumerate}
  \item $u=1$, $a_1=-2$, $a_2=2$, $a_3=2$, and $X$ is a complete intersection of
type $1,1,5$;
  \item $u=1$, $a_1=-1$, $a_2=1$, $a_3=2$, and $X$ is a complete intersection of
type $1,2,4$;
  \item $u=1$, $a_1=0$, $a_2=0$, $a_3=2$, and $X$ is a complete intersection of
type $1,3,3$;
 \item $u=1$, $a_1=0$, $a_2=1$, $a_3=1$, and $X$ is a complete intersection of
type $2,2,3$;
 \item $u=2$, $a_i=0$, for $i\in\{1\dots 4\}$, $a_5=1$, and $X$ is a degree 13
Calabi--Yau threefold described in \cite{TO};
 \item $u=3$, $a_i=0$, for $i\in\{1\dots 7\}$, and $X$ is a degree 14 Calabi--Yau
threefold described in \cite{TO,Ro};
 \end{enumerate}
   \item  $E=\Omega^1_{\mathbb{P}^6}(1)\oplus
\bigoplus_{i=1}^{2v+1}\mathcal{O}_{\mathbb{P}^6}(a_i)$ with:
   \begin{enumerate}
 \item $v=0$, $a_1=1$, and $X$ is a degree 14 Calabi--Yau threefold from the
family  $\mathfrak{B}_{14}$ described in \cite{Be} (see also Example
\ref{przyklad stopnia 14 bertin});
 \item $v=1$, $a_1=0$, $a_2=0$, $a_3=0$, and $X$ is a degree 15 Calabi--Yau
threefold described in \cite{TO}.
\end{enumerate}
\end{enumerate}
\end{thm}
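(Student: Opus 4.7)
The plan is to apply the Walter--Schreyer structure theorem, recalled in the introduction, to reduce the classification to a finite enumeration. That theorem asserts that any Calabi--Yau threefold $X\subset\PP^6$ is Pfaffian, $X=\Pf(\sigma)$ with $\sigma\in H^0(\bigwedge^2E(1))$, where $E$ has odd rank $2u+1$ with $c_1(E)+u=3$, and $E(-3)$ decomposes as a direct sum of line bundles together with the sheafification of the first syzygy module of the (shifted) Hartshorne--Rao module $M=HR(X)$. First I would show, via Kodaira vanishing on the Calabi--Yau $X$ together with Serre duality, that $H^1(\oo_X(k))=0$ for all $k\in\mathbb{Z}$; this gives $H^2_*(\mathcal{I}_X)=0$, so only $M=H^1_*(\mathcal{I}_X)$ contributes. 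Linear normality (Proposition \ref{pro linearly normal}) together with $h^1(\oo_X)=0$ kills $M_k$ for $k\leq 1$, so $M$ lives in degrees $\geq 2$. The quasi--Buchsbaum hypothesis $\mathfrak{m}\cdot M=0$ then forces
\[
M \;=\; \bigoplus_{j\geq 2}\mathbb{C}(-j)^{\oplus m_j}
\]
as a graded $\mathfrak{S}$--module with trivial action of the variables.

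Next I would compute $\widetilde{Syz^1(M)}$ explicitly. Each summand $\mathbb{C}(-j)$ contributes, through the initial steps of its Koszul resolution sheafified via the twisted Euler sequence $0\to\Omega^1_{\PP^6}(-j)\to\oo_{\PP^6}(-j-1)^{\oplus 7}\to\oo_{\PP^6}(-j)\to 0$, a copy of $\Omega^1_{\PP^6}(-j)$. Untwisting by $\oo(3)$ gives
\[
E\;=\;\bigoplus_{j\geq 2}\Omega^1_{\PP^6}(3-j)^{\oplus m_j}\;\oplus\;\bigoplus_i\oo_{\PP^6}(a_i).
\]
The odd-rank condition on $E$ and the Chern-class relation $c_1(E)+u=3$ then become linear equations in the data $(m_j,a_i)$. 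Combined with the a priori degree bound $11\leq d_X\leq 41$ from Corollary \ref{ograniczenie na stopien}, a Castelnuovo--Mumford regularity estimate for nondegenerate subvarieties of bounded degree in $\PP^6$, and the degree of $X$ computed directly from the Pfaffian resolution (\ref{pfaffian sequence}), this leaves a finite list of candidate bundles.

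The final step is to impose that $H^0(\PP^6,\bigwedge^2E(1))$ contains sections whose degeneracy loci are smooth threefolds of the expected codimension three, and to verify that only the eight bundles listed survive. In particular one shows that $m_j=0$ for $j\neq 2$ and $m_2\leq 1$, so that $E$ contains at most one copy of $\Omega^1_{\PP^6}(1)$. Smoothness of a generic Pfaffian is then established by base-point and dimension counts, and the identification of each surviving family with a known example is by matching Hilbert polynomials and invariants: cases (1a)--(1d) give the complete intersections, (1e), (1f), (2b) reproduce the examples of \cite{TO} and \cite{Ro}, and (2a) is the Bertin variety of \cite{Be}. The hardest part will be this enumeration: each rejected configuration (multiple $\Omega^1(1)$--summands, or $\Omega^1(3-j)$--summands for some $j\neq 2$) requires a tailored argument combining the Chern-class identities, the degree formula and obstructions to smoothness of the Pfaffian zero locus, and this case analysis is the technical heart of the proof.
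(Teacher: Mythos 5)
Your opening is the same as the paper's: quasi-Buchsbaumness forces the Hartshorne--Rao module to be a direct sum of twisted copies of $\mathbb{C}$, so $Syz^1$ sheafifies to a sum of twisted $\Omega^1_{\PP^6}$'s, linear normality confines the module to degrees $\geq 2$, and the adjunction formula gives $c_1(E)+u=3$. The genuine gap is at the finiteness step. The relation $c_1(E)+u=3$ is a single linear equation in which arbitrarily negative line-bundle twists $a_i$ can be compensated by positive ones, so it leaves \emph{infinitely} many candidate bundles; your appeal to the degree bound $11\leq d\leq 41$ plus "a Castelnuovo--Mumford regularity estimate" does not explain how the individual twists $a_i$ (or the number of summands) get bounded, and no such soft argument is given in the paper either. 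The paper's mechanism is the invariant $w(F)=c_1(F)+\tfrac12\rk F$ together with Lemma \ref{lem first + last>0}, which pairs every negative summand of $E$ with a positive line-bundle summand ($l_i+k_{i+1}\geq 0$, $n_{\mathrm{neg}}<n_{\mathrm{pos}}$). Proving that pairing is the technical heart: one shows that all blocks of $\varphi$ between components of low total twist vanish (Lemma \ref{lemma no nonzero constant terms in Pfaffian}, which also needs $E$ to be chosen of \emph{minimal rank} so that constant entries can be cancelled by row/column operations --- a normalization you never impose), and that a zero block $\varphi_{E_1,E_2}=0$ forces $\rk E_1+\rk E_2<2u+1$ (Lemma \ref{lem too big blocks of zeroes}, a degeneracy-locus codimension argument). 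Without some substitute for this, your "linear equations in $(m_j,a_i)$" simply do not terminate.

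A second omission: even after the numerical enumeration, two bundles survive all the Chern-class and cohomological constraints and must be excluded geometrically, namely $E=3\oo_{\PP^6}(1)\oplus 2\oo_{\PP^6}(-1)$ (whose Pfaffians would have degree $11$ but are \emph{always} singular --- this is Proposition \ref{CY stopnia 11}, proved via the analysis of threefolds in rank-$4$ quadrics and an explicit computation, not by a base-point/dimension count) and $E=\oo_{\PP^6}(-1)\oplus 2\oo_{\PP^6}(1)\oplus 2\oo_{\PP^6}$ with vanishing constant terms (where the Pfaffian locus is degenerate or of wrong codimension). Your closing sentence acknowledges that rejected configurations need "tailored arguments" but supplies none of them, and in particular the smoothness obstruction for the degree-$11$ bundle is not something generic-section smoothness arguments can produce, since there the generic section is exactly what fails to be smooth. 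So the proposal reproduces the paper's setup but is missing the two ingredients that actually make the classification work.
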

\begin{proof}
Take $X$ a quasi-Buchsbaum Calabi--Yau threefold in $\mathbb{P}^6$. Then, by definition, the
Hartshorne--Rao module $HR(X)=H^1_*(\mathcal{I}_X)$ is annihilated by the maximal ideal of the
structure ring of $\mathbb{P}^6$. 
It follows that the resolution of $HR(X)$ is given by a direct sum of twisted Koszul
complexes. Thus the sheafification $Syz^1(HR(X))$ of the first syzygy module of
$HR(X)$ is
$\bigoplus_{i=1}^{n_b} \Omega^1(b_i-2)$ for some $b_1\dots b_{n_b} \in \mathbb{Z}$.
We, moreover, claim that $b_i\leq 0$ for $i=1\dots n_b$.
 Indeed,  from Proposition \ref{pro linearly normal} and the exact sequence:
$$0\to \mathcal{I}_X(q) \to \mathcal{O}_{\mathbb{P}^6}(q) \to \mathcal{O}_{X}(q) \to 0.$$
we have $H^1(\mathcal{I}_X(q))=0$ for $q\leq 1$. To prove the claim, it is now enough to observe that in the above notation we have $H^1(\mathcal{I}_X(2-b_i))>0$ for $i=1\dots n_b$.

Let, now, $M$ be the module obtained by shifting the gradation in the Hartshorne--Rao module $HR(X)$ by 3. 
As observed in the introduction, there exists a vector bundle $E= Syz^1(M)\oplus
\bigoplus_{j=1}^{n_a} \mathcal{O}_{\mathbb{P}^6}(a_i)$ with $a_1\dots a_{n_a} \in
\mathbb{Z}$ and a section $\varphi \in H^0(\mathbb{P}^6, \bigwedge^2 E(1))$ such
that $X=Pf(\varphi)$.
Without loss of generality, we assume that $E$ is a bundle that has minimal rank among bundles for which there exists
such a $\varphi\in H^0(\mathbb{P}^6, \bigwedge^2 E(1))$ that $X=Pf(\varphi)$. The rank of the bundle $E$ is $2u+1$ for some $u\in \mathbb{N}$.
Observe that there is a decomposition
$$E= \bigoplus_{i=1}^{n_b} \Omega^1(b_i+1)\oplus \bigoplus_{j=1}^{n_a}
\mathcal{O}_{\mathbb{P}^6}(a_i).$$
We fix such a decomposition for the rest of the proof. The bundles appearing in this decomposition, treated both as subbundles and as quotient bundles, will be called components of $E$.
We also have an induced decomposition
$$E^*(-1)= \bigoplus_{i=1}^{n_b} (\Omega^1(b_i+1))^*(-1)\oplus \bigoplus_{j=1}^{n_a}
\mathcal{O}_{\mathbb{P}^6}(-a_i-1)=\bigoplus_{i=1}^{n_b} (\Omega^5(5-b_i))\oplus
\bigoplus_{j=1}^{n_a} \mathcal{O}_{\mathbb{P}^6}(-a_i-1).$$
One can, now, think of $\varphi$ as of a matrix consisting of blocks of the following
types
\begin{itemize}
 \item $\varphi^a_{i,j}=\pi^a_j \circ
\varphi|_{\mathcal{O}_{\mathbb{P}^6}(-a_i-1)}:
\mathcal{O}_{\mathbb{P}^6}(-a_i-1) \to \mathcal{O}_{\mathbb{P}^6}(a_j)$, for
$i,j \in \{1,\dots,n_a\}$;
 \item $\varphi^{a,b}_{i,j}= \pi^b_j \circ
\varphi|_{\mathcal{O}_{\mathbb{P}^6}(-a_i-1)}:
\mathcal{O}_{\mathbb{P}^6}(-a_i-1) \to  \Omega^1_{\mathbb{P}^6}(b_j+1) $, for
$i\in \{1,\dots,n_a\}$, $j\in \{1,\dots, n_b\}$ ;
 \item $\varphi^{a,b}_{i,j}= \pi^a_j \circ
\varphi|_{\Omega^5_{\mathbb{P}^6}(6-b_i)}: {\Omega^5_{\mathbb{P}^6}(5-b_i)} \to
\mathcal{O}_{\mathbb{P}^6}(a_j)$ for $i\in \{1,\dots,n_b\}$, $j\in \{1,\dots,
n_a\}$;
 \item $\varphi^b_{i,j}=\pi^b_j \circ \varphi|_{\Omega^5_{\mathbb{P}^6}(6-b_i)}
:{\Omega^5_{\mathbb{P}^6}(5-b_i)} \to  {\Omega^1_{\mathbb{P}^6}(b_j+1)}$ for
$i,j \in \{1,\dots,n_b\}$ ;
 \end{itemize}
 where $\pi^a_j$ and $\pi^b_j$ are the projections onto the components of $E$ i.e.
$\mathcal{O}_{\mathbb{P}^6}(a_j)$ and $\Omega^1_{\mathbb{P}^6}(b_j+1)$
respectively for each $j$.
We then have $\varphi^a_{i,j}=-(\varphi^a_{j,i})^*$,
$\varphi^{a,b}_{i,j}=-(\varphi^{b,a}_{j,i})^*$ and
$\varphi^b_{i,j}=-(\varphi^b_{j,i})^*$. Equivalently, we have a decomposition:
\begin{multline}
 H^0(\bigwedge^2 E(1))=\bigoplus_{1\leq i<j\leq n_a} H^0(\mathcal{O}_{\mathbb{P}^6}(a_i+a_j+1))\oplus \bigoplus_{\substack{
                                                                                                  i\in\{1\dots n_a\}\\ j\in\{1 \dots n_b\}
                                                                                                 }} H^0(\Omega^1_{\mathbb{P}^6}(a_i+b_j+2))\\
       \oplus \bigoplus_{1\leq i<j\leq n_b} H^0(\Omega^1_{\mathbb{P}^6} \otimes \Omega^1_{\mathbb{P}^6}(b_i+b_j+3)) \oplus \bigoplus_{i=1}^{n_b} H^0(\Omega^2_{\mathbb{P}^6}(2b_i+3))
\end{multline}

and the maps $\varphi^a _{i,j},\varphi^{a,b}_{i,j}, \varphi^b_{i,j}, \varphi^b_{i,i}$ are identified with the respective components of $\varphi$ in the above decomposition.
By abuse of notation,  we shall use this identification without further commentary throughout the proof.
We shall, moreover, use the notation $\varphi_{E_1,E_2}$ for the projection of $\varphi$ onto $(E_1\otimes E_2)(1)$ for any two subbundles $E_1$,$E_2 \subset E$ appearing as sums of components of $E$
and for the corresponding map $E_1^*(-1)\to E_2$.

Our aim is to deduce as many as possible restrictions on the coefficients $a_i$ and $b_i$
following from the fact that $\varphi$ defines a codimension 3 variety which has
trivial canonical class.
The first such condition is given by the formula \ref{pfaffian adjunction} for
the canonical class of a Pfaffian variety given in the introduction. We have
$$\omega_X=\mathcal{O}_X(2u+2c_1(E)-6),$$
which means that $c_1(E)+u=3$. Since $c_1(E)=\sum_{i=1}^{n_b} (-1+ 6b_i) + \sum_{j=1}^{n_a}
a_j$, we have the equation
\begin{equation} \label{adjunction for buchsbaum}
u+\sum_{i=1}^{n_b} (-1+ 6b_i) + \sum_{j=1}^{n_a} a_j =3.
\end{equation}
Since, in the above, both negative and positive $a_j$'s may occur, we, so far, still have infinitely many possibilities for the bundle $E$. To reduce this number of possibilities
to a finite number, we introduce the invariant $w(F)=c_1(F)+\frac{1}{2} \rk F\in \frac{1}{2}\mathbb{Z}$ for each subbundle $F$ of $E$. It is clear that $w$ is additive when taking direct sums of bundles. Moreover,
in terms of $w$, formula \ref{adjunction for buchsbaum} takes the form
\begin{equation} \label{adjunction for buchsbaum with w}
w(E)=3+\frac{1}{2}.
\end{equation}

Our aim is to find a natural decomposition of $E$ into a  direct sum of as many as possible bundles with positive $w$.  We proceed as follows.
Let $(l_i)_{i\in \{1\dots n_{\mathrm{neg}}\}}$ be the weakly increasing sequence consisting of all negative $a_i$'s and $b_i$'s with each
negative $b_i$ occurring 6 consecutive times (if no $a_i$ nor $b_i$ is negative we set $n_{neg}=0$). Similarly, let
$(k_i)_{i\in \{1\dots n_{\mathrm{pos}}\}}$ be the weakly decreasing sequence consisting of positive $a_i$'s (if no $a_i$ is positive we set $n_{neg}=0$).

The key to the proof of Theorem \ref{Buchsbaum CY} is the following lemma.
\begin{lemm}\label{lem first + last>0}
 Under the above notation, either $n_{\mathrm{neg}} = n_{\mathrm{pos}}=0$, or both the following hold:
 \begin{itemize}
 \item $n_{\mathrm{neg}} < n_{\mathrm{pos}}$,
  \item  $l_i+k_{i+1}\geq 0$ for each $i\in 1\dots n_{\mathrm{neg}}$.
 \end{itemize}
\end{lemm}
Before we pass to the proof of Lemma \ref{lem first + last>0}, let us finish the proof of Theorem \ref{Buchsbaum CY} assuming the lemma.

Keeping our notation, each component $F$ of $E$ such that $w(F)<0$ is either a line bundle  $\mathcal{O}_{\mathbb{P}^6}(l_{i(F)})$ for some
$i(F)\in \{1\dots n_{\mathrm{neg}}\}$ or a bundle of twisted first
differentials $\Omega^1_{\mathbb{P}^6}(l_{i(F)})$ with $l_{i(F)}=\dots=l_{i(F)+6}$ for some $i(F)\in \{1\dots n_{\mathrm{neg}}\}$.
Now, to each component $F$ of $E$ with $w(F)<0$ one associates a vector subbundle $A(F)$ of $E$ in the following way.

$$A(F):=\begin{cases}
         \mathcal{O}_{\mathbb{P}^6}(k_{i(F)+1}) \text{ for } F=\mathcal{O}_{\mathbb{P}^6}(l_{i(F)})\\
         \bigoplus_{j=1}^6 \mathcal{O}_{\mathbb{P}^6}(k_{i(F)+j}) \text{ for } F=\Omega^1_{\mathbb{P}^6}(l_{i(F)})
        \end{cases}$$

By Lemma \ref{lem first + last>0}, the bundle $A(F)$ is well defined for every component $F$ of $E$ with $w(F)<0$ and the following holds:
\begin{itemize}
\item $A(F)$ is a sum of components of positive degree not involving the component $\mathcal{O}_{\mathbb{P}^6}(k_{1})$;
\item if $F_1\cap F_2=0$ then $A(F_1)\cap A(F_2)=0$;
\item $\rk A(F)=\rk F$;
\item $w(F)+w(A(F))\geq \rk F$.
\end{itemize}
It follows that we get a decomposition of $E$:
$$E=\bigoplus_{\substack{F \text{ component of } E\\ w(F)<0}} (F\oplus A(F)) \oplus \bigoplus_{\substack{F \text{ component of } E\\w(F)> 0\\
F\cap \bigoplus_{w(G)<0} A(G)=0
}} F$$

into bundles with positive $w$ (observe that $w(F)\neq 0$ when $F$ is a component of $E$) including the line bundle of maximal degree $\mathcal{O}_{\mathbb{P}^6}(k_{1})$. We then easily list
all possibilities. Indeed, we have:
\begin{enumerate}
 \item $b_i= 0$ for $i\in\{1\dots n_b\}$, because for any $c\leq 0$, we have $w(\Omega^1_{\mathbb{P}^6}(c)+A(\Omega^1_{\mathbb{P}^6}(c)))\geq 6>3$ ;
\item $n_b\leq 1$, because $w(\Omega^1_{\mathbb{P}^6}(1))=2$;
\item if $n_b=1$ then there is no negative $a_i$, because otherwise $w(\mathcal{O}_{\mathbb{P}^6}(l_{1})\oplus \mathcal{O}_{\mathbb{P}^6}(k_{1}) \oplus \mathcal{O}_{\mathbb{P}^6}(k_{2}))\geq 2+\frac{1}{2}$;
\item if $n_b=0$ then $l_1 \geq -2$, because when $l_1\leq -3$, we have  $w(\mathcal{O}_{\mathbb{P}^6}(l_{1})\oplus \mathcal{O}_{\mathbb{P}^6}(k_{1}) \oplus \mathcal{O}_{\mathbb{P}^6}(k_{2})\geq 4+ \frac{1}{2}$;
\item if $l_1 = -2$ then $E=\mathcal{O}_{\mathbb{P}^6}(-2)\oplus \mathcal{O}_{\mathbb{P}^6}(2)$, because
$w(\mathcal{O}_{\mathbb{P}^6}(l_{1})\oplus \mathcal{O}_{\mathbb{P}^6}(k_{1}) \oplus \mathcal{O}_{\mathbb{P}^6}(k_{2})\geq 3+\frac{1}{2}$ and equality holds only if $k_1=k_2=2$;
\item if $l_1 = l_2=-1$ then  $E=3\mathcal{O}_{\mathbb{P}^6}(1)\oplus 2\mathcal{O}_{\mathbb{P}^6}(-1)$, because $w(\mathcal{O}_{\mathbb{P}^6}(l_{1})\oplus \mathcal{O}_{\mathbb{P}^6}(l_{2})\oplus
\mathcal{O}_{\mathbb{P}^6}(k_{1}) \oplus \mathcal{O}_{\mathbb{P}^6}(k_{2}) \oplus \mathcal{O}_{\mathbb{P}^6}(k_{3})\geq 3+\frac{1}{2}$ and equality holds only if $k_1=k_2=k_3=1$;
\item if $l_1 = -1$ and all remaining $a_i$ are nonnegative then we have two possibilities:
\begin{enumerate}
\item $E=\mathcal{O}_{\mathbb{P}^6}(-1)\oplus \mathcal{O}_{\mathbb{P}^6}(1) \oplus \mathcal{O}_{\mathbb{P}^6}(2)$
\item $E=\mathcal{O}_{\mathbb{P}^6}(-1)\oplus 2 \mathcal{O}_{\mathbb{P}^6}(1) \oplus 2\mathcal{O}_{\mathbb{P}^6}$ and the constant terms in $\varphi$ are $0$
\end{enumerate}
\item if $n_b=0$ and all $a_i$ are non-negative we have 4 possibilities for $E$ as in the assertion.
\end{enumerate}

To conclude, we need to exclude two cases which do not appear in the assertion:
\begin{itemize}
\item $E=3\mathcal{O}_{\mathbb{P}^6}(1)\oplus 2\mathcal{O}_{\mathbb{P}^6}(-1)$; we shall see in Proposition \ref{CY stopnia 11} that Pfaffian varieties associated to this bundle are always singular.
\item $E=\mathcal{O}_{\mathbb{P}^6}(-1)\oplus 2 \mathcal{O}_{\mathbb{P}^6}(1) \oplus 2\mathcal{O}_{\mathbb{P}^6}$ and the constant terms in $\varphi\in \bigwedge^2 E(1)$ are $0$; we easily see that
$Pf(\varphi)$ either does not exist (i.e. the degeneracy locus is of codimension $\leq 2$) or must be contained in a hyperplane.
\end{itemize}

To complete the proof of Theorem \ref{Buchsbaum CY} we, hence, need only to prove Lemma \ref{lem first + last>0} and Proposition \ref{CY stopnia 11}.
\end{proof}

\begin{proof}[Proof of Lemma \ref{lem first + last>0}] If $n_{neg}=0$ then the assertion is trivial.
Assume, hence, by contradiction that $n_{neq} >0$ and that there exists $i\in \{1\dots n_{\mathrm{neg}}\}$ such that $i+1>n_{pos}$ or that $k_{i+1}+l_i < 0$. Let 
$$E_1=\bigoplus_{\{j| a_j\leq l_i\}} \mathcal{O}_{\mathbb{P}^6}(a_j) \oplus \bigoplus_{\{j| b_j\leq l_i\}} \Omega^1_{\mathbb{P}^6}(1+b_j)$$ 
considered as a subbundle of $E$. Observe that, by definition of the sequence $(l_i)_{i\in \{1\dots n_{\mathrm{neg}}\}}$, we have $\rk E_1\geq i$.
Let, moreover, 
$$E_2= \begin{cases} \bigoplus_{\{j| a_j\leq k_{i+1}\}} \mathcal{O}_{\mathbb{P}^6}(a_j) \oplus \bigoplus_{\{j| b_j\leq k_{i+1}\}} \Omega^1_{\mathbb{P}^6}(1+b_j) \text{ when } i+1\leq n_{pos}\\

\bigoplus_{\{j| a_j\leq 0\}} \mathcal{O}_{\mathbb{P}^6}(a_j) \oplus \bigoplus_{\{j| b_j\leq 0\}} \Omega^1_{\mathbb{P}^6}(1+b_j) \text{ when } i+1> n_{pos}

       \end{cases}
$$
also considered as a subbundle of $E$. Similarly as for $E_1$ we have $\rk E_2\geq 2u+1-n_{pos}$ when  $i+1> n_{pos}$ and $\rk E_2\geq 2u+1-i$.
In any case:
\begin{equation}\label{rkE1+rkE2>E}
\rk E_1 +\rk E_2 \geq \rk E=2u+1.
\end{equation}
We also clearly have $E_1\subset E_2$.
Moreover, the following lemma proves that  $\varphi_{E_1,E_2}=0$.

\begin{lemm}\label{lemma no nonzero constant terms in Pfaffian}
In the notation of the proof of Theorem \ref{Buchsbaum CY}, we have:
\begin{itemize}
 \item $\varphi^a_{i,j}=0$ when $a_i+a_j\leq -1$
 \item $\varphi^{a,b}_{i,j}=0$ and $\varphi^{b,a}_{j,i}=0$ when $a_i+b_j\leq -1$
 \item $\varphi^{b}_{i,j}=0$ when $b_i+b_j\leq 0$
\end{itemize}
\end{lemm}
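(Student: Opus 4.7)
The plan is to split each of the three bullets into a purely cohomological regime and a small boundary regime where one must invoke the minimality of $\rk E$ assumed at the start of the proof of Theorem \ref{Buchsbaum CY}. First, I would dispose of the cohomological vanishings. The second bullet is entirely automatic: $\varphi^{a,b}_{i,j}$ lies in $H^0(\Omega^1_{\mathbb{P}^6}(a_i+b_j+2))$, and the Euler sequence $0\to \Omega^1\to \mathcal{O}(-1)^{\oplus 7}\to \mathcal{O}\to 0$ forces $H^0(\Omega^1_{\mathbb{P}^6}(k))=0$ for $k\leq 1$. Similarly, the first bullet with $a_i+a_j\leq -2$ reduces to $H^0(\mathcal{O}(k))=0$ for $k<0$; the off-diagonal case of the third bullet with $b_i+b_j\leq -1$ follows, after tensoring the Euler sequence with $\Omega^1(k)$, from $H^0(\Omega^1\otimes \Omega^1(k))=0$ for $k\leq 2$; and the diagonal case $i=j$ with $b_i\leq -1$ follows from $H^0(\Omega^2(k))=0$ for $k\leq 2$, again by Euler/Bott.

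What remains are the boundary values $a_i+a_j=-1$ and $b_i+b_j=0$, at which the relevant $H^0$-groups genuinely contain ``constant'' contributions -- a copy of $\mathbb{C}$ in the first case, and pieces of $\bigwedge^3 H^0(\mathcal{O}(1))^*\cong H^0(\Omega^2(3))$ (together with a summand of $H^0(\mathrm{Sym}^2\Omega^1(3))$ when $i\neq j$) in the third. My plan here is to argue by contradiction via rank-minimality: suppose the corresponding piece of $\varphi$ is nonzero. For the first bullet the nonzero constant $c=\varphi^a_{i,j}$ provides a unit off-diagonal entry in the skew matrix of $\varphi$, and the standard skew-symmetric row/column operations on $E$ decouple $\mathcal{O}(a_i)\oplus \mathcal{O}(a_j)$ from the remaining components of $E$ and split it off as a hyperbolic plane; the Pfaffian then descends to a skew form $\varphi'\in H^0(\bigwedge^2 E'(1))$ on a bundle $E'$ of rank $\rk E-2$, with $\Pf(\varphi')=\Pf(\varphi)=X$, contradicting the minimality of $\rk E$. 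For the third bullet I would perform the analogous reduction, but using the Euler sequence $0\to \Omega^1(1)\to \mathcal{O}^{\oplus 7}\to \mathcal{O}(1)\to 0$ to relate a nonzero constant piece of $\varphi^b_{i,j}$ to a compatible pattern of unit entries in the skew form induced on $\mathcal{O}^{\oplus 7}$, then carry out the same hyperbolic-plane-style splitting to produce a smaller $E'$ realizing the same $X$ and contradict minimality in the same way.

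The principal obstacle will be making the change-of-basis reduction clean in the $\Omega^1$-summand case. For a line-bundle summand, a unit scalar in the skew matrix gives the hyperbolic splitting essentially for free; for an $\Omega^1$-summand the constants of $\varphi^b$ live in a more structured space (think of a $3$-form on $\mathbb{C}^7$ via the identification above), and the reduction has to be mediated by the Euler sequence and checked to preserve the Pfaffian locus $X$. Once this technical step is in place, each of the three bullets is covered by combining the cohomological vanishings with the minimality reductions.
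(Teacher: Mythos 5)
Your cohomological reductions and your treatment of the scalar boundary case are exactly the paper's argument: the vanishings $H^0(\mathcal{O}_{\PP^6}(k))=0$ for $k\le -1$, $H^0(\Omega^1_{\PP^6}(k))=0$ for $k\le 1$, and $H^0(\Omega^1_{\PP^6}\otimes\Omega^1_{\PP^6}(k))=0$ for $k\le 2$ dispose of all blocks except the constant ones, and for $\varphi^a_{i,j}$ with $a_i+a_j=-1$ the paper does precisely the hyperbolic splitting you describe (rescale the constant to $1$, split off $\mathcal{O}_{\PP^6}(a_i)\oplus\mathcal{O}_{\PP^6}(a_j)$ by row and column operations, replace $\varphi$ by $\varphi_{E',E'}+\varphi_{\mathcal{O}_{\PP^6}(a_i),E'}\wedge\varphi_{\mathcal{O}_{\PP^6}(a_j),E'}$, and contradict the minimality of $\rk E$).

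The genuine gap is the step you yourself flag as the ``principal obstacle'': the proposed Euler-sequence reduction for a nonzero block $\varphi^b_{i,j}$ with $b_i+b_j=0$ cannot be carried out, because the assertion is false at that boundary. Indeed, cases (2)(a) and (2)(b) of Theorem \ref{Buchsbaum CY} themselves are counterexamples: for $E=\Omega^1_{\PP^6}(1)\oplus\mathcal{O}_{\PP^6}(1)$ (the family of Proposition \ref{przyklad stopnia 14 bertin}) one has $b_1=0$, and the diagonal block $\varphi^b_{1,1}\in H^0(\Omega^2_{\PP^6}(3))\cong\bigwedge^3 H^0(\mathcal{O}_{\PP^6}(1))\neq 0$ must be nonzero, since otherwise the only surviving blocks are those between $\Omega^1_{\PP^6}(1)$ and $\mathcal{O}_{\PP^6}(1)$, the skew matrix has rank at most $2$ everywhere, and $\Pf(\varphi)$ is not a codimension-$3$ threefold; the same happens for $E=\Omega^1_{\PP^6}(1)\oplus 3\mathcal{O}_{\PP^6}$ in degree $15$ (there Lemma \ref{lem too big blocks of zeroes} with $E_1=E_2=\Omega^1_{\PP^6}(1)$ forces $\varphi^b_{1,1}\neq 0$). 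Structurally, a nonzero element of $H^0(\Omega^2_{\PP^6}(3))$, unlike a nonzero scalar, is nowhere a nondegenerate pairing on the $\Omega^1$-summand, so there is no hyperbolic-plane splitting to be had, with or without the Euler sequence. The paper in fact only proves, and later only uses, the third bullet in the range $b_i+b_j\le -1$, where it is pure cohomology (note also that $H^0(\mathrm{Sym}^2\Omega^1_{\PP^6}(3))=0$, so the whole obstruction at the boundary is the $\bigwedge^3$ piece); in the applications inside Lemmas \ref{lem first + last>0} and \ref{lem too big blocks of zeroes} only degree sums $\le -1$ and scalar blocks ever occur. So the correct move is to read the third bullet with $b_i+b_j\le -1$ (the stated ``$\le 0$'' is an overstatement) rather than to try to prove the $b_i+b_j=0$ case; your first two paragraphs then constitute a complete proof along the paper's lines.
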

\begin{proof}
Using the following vanishing
\begin{itemize}
 \item $Hom(\mathcal{O}_{\mathbb{P}^6}(-a-1), \mathcal{O}_{\mathbb{P}^6}(b))= H^0(\mathcal{O}_{\mathbb{P}^6}(b+a+1))=0$
when $a+b\leq -2$
 \item $Hom(\mathcal{O}_{\mathbb{P}^6}(-a-1), \Omega^1_{\mathbb{P}^6}(b+1))=H^0(\Omega^1_{\mathbb{P}^6}(a+b+2))=0$
when $a+b\leq -1$
 \item $Hom((\Omega^1_{\mathbb{P}^6})^*(-a-2), \Omega^1_{\mathbb{P}^6}(b+1))=H^0(\Omega^2_{\mathbb{P}^6}(a+b+3))=0$
when $a+b\leq -1$
\end{itemize}
we get
\begin{itemize}
 \item $\varphi^a_{i,j}=0$ when $a_i+a_j\leq -2$
 \item $\varphi^{a,b}_{i,j}=0$ and $\varphi^{b,a}_{j,i}=0$ when $a_i+b_j\leq -1$
 \item $\varphi^{b}_{i,j}=0$ when $b_i+b_j\leq 0$
\end{itemize}

It remains to prove that $\varphi^a_{i,j}=0$ when $a_i+a_j=-1$.
  Assume the contrary, then $\varphi^a_{i,j}$, for some $i,j \in \{1\dots l\}$, is a non-zero section of $\mathcal{O}_{\mathbb{P}^6}$ thus a non-zero constant which we assume to be 1 by re-scaling.
  Denote by $E'$ the subbundle of $E$ such that $E=E'\oplus \mathcal{O}_{\mathbb{P}^6}(a_i)\oplus \mathcal{O}_{\mathbb{P}^6}(a_j)$. The section $\varphi$ is then decomposed as a sum
  $$\varphi=(1,\varphi_{\mathcal{O}_{\mathbb{P}^6}(a_i),E'}, \varphi_{\mathcal{O}_{\mathbb{P}^6}(a_j),E'}, \varphi_{E',E'})\in H^0(\mathcal{O}_{\mathbb{P}^6} \oplus E'(a_i+1)\oplus E'(a_j+1)\oplus \bigwedge^2 E'(1))$$
  Consider $$\varphi'=(1,0,0,\varphi_{E',E'} + (\varphi_{\mathcal{O}_{\mathbb{P}^6}(a_i),E'}\wedge \varphi_{\mathcal{O}_{\mathbb{P}^6}(a_j),E'})).$$ We claim that $\Pf(\varphi)=\Pf(\varphi')=\Pf(\psi)$, where
  $\psi=\varphi_{E',E'} + (\varphi_{\mathcal{O}_{\mathbb{P}^6}(a_i),E'}\wedge \varphi_{\mathcal{O}_{\mathbb{P}^6}(a_j),E'})\in H^0(\bigwedge^2 E'(1))$. Indeed, $\Pf(\varphi)=\Pf(\varphi')$ follows from the fact that 
  locally under a trivialization of $E$ respecting the decomposition $E=E'\oplus \mathcal{O}_{\mathbb{P}^6}(a_i)\oplus \mathcal{O}_{\mathbb{P}^6}(a_j)$ we have $\varphi'$ is obtained by row and column operations from $\varphi$.
  On the other hand, the equality $\Pf(\varphi')=\Pf(\psi)$ is clear.
  The claim being proven, we get a contradiction with the minimality of $E$. This shows that $\varphi^a_{i,j}=0$ when $a_i+a_j=-1$ and completes the proof.
 \end{proof}

We conclude the proof of Lemma \ref{lem first + last>0} by obtaining a contradiction of the above with the following lemma.

\begin{lemm} \label{lem too big blocks of zeroes} Let $E_1\subset E_2 \subset E$ be subbundles of $E$ given by
some sums of its components. Consider $\varphi_{E_1,E_2}: E_1^*(-1)\to
E_2$ as defined above. If $\varphi_{E_1,E_2}=0$ then
$\rk(E_1)+\rk(E_2)<2u+1$.
 \end{lemm}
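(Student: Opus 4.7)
The plan is to show that assuming $\rk E_1+\rk E_2\geq 2u+1$ — equivalently $r_1\geq r_3$, where $(r_1,r_2,r_3)$ are the ranks in a splitting $E=E_1\oplus F\oplus E_3$ with $E_2=E_1\oplus F$ — forces the Pfaffian locus $X=\Pf(\varphi)$ to contain a subscheme of codimension $\leq 2$ in $\mathbb{P}^6$, contradicting that a Calabi--Yau threefold has pure codimension $3$. Writing $M=(M_{ij})$ for the block skew matrix of $\varphi:E^*(-1)\to E$, the hypothesis $\varphi_{E_1,E_2}=0$ together with antisymmetry gives $M_{11}=M_{12}=M_{21}=0$. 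The main device is the map $\tau:=\varphi|_{E_2^*(-1)}:E_2^*(-1)\to F\oplus E_3$, well-defined since $\varphi_{E_2,E_1}=-\varphi_{E_1,E_2}^*=0$ forces the image into $E/E_1$; the lift $(v_1,v_2)\mapsto(v_1,v_2,0)$ embeds $\ker\tau(p)\hookrightarrow\ker\varphi(p)$. Since the kernel of a skew form on the odd-dimensional fibre $E^*(p)$ always has odd dimension, $\dim\ker\varphi(p)\geq 2$ already forces $p\in X$ by parity.

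The two easy cases follow at once. If $r_1-r_3\geq 2$, then $\dim\ker\tau\geq r_1-r_3\geq 2$ everywhere, so $X=\mathbb{P}^6$, impossible. If $r_1=r_3+1$, the Porteous degeneracy locus $\{\rk\tau\leq r_2+r_3-1\}$ has codimension at most $(r_1+r_2)-(r_2+r_3)+1=2$, and on it $\dim\ker\tau\geq 2$, hence $\dim\ker\varphi\geq 3$ by parity, giving $\codim X\leq 2$, a contradiction.

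The hard case $r_1=r_3$ is where the naive rank-drop analysis of $\tau$ only yields codimension $\geq 3$, and where the skew-symmetry of $M_{22}$ must be exploited directly. Here $r_2=2u+1-2r_1$ is necessarily odd, so $M_{22}$ is a skew map of odd rank $r_2$ with a generically $1$-dimensional kernel, locally generated by some $b\in F^*$. I propose to show that $X$ contains the codim-$\leq 2$ scheme $Z$ defined by (i) $\det M_{13}=0$, where $M_{13}:E_3^*(-1)\to E_1$ is a map between bundles of equal rank $r_1=r_3$ (codimension $1$), together with (ii) the vanishing of the scalar $\langle b,M_{23}v_3\rangle$, where $v_3$ locally generates $\ker M_{13}$ along (i) (a further codimension $1$). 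At $p\in Z$ I construct an extra kernel vector $(v_1,v_2,v_3)\in\ker\varphi(p)$ with $v_3\neq 0$ in three steps: pick $v_3\in\ker M_{13}\setminus\{0\}$; solve $M_{22}v_2=-M_{23}v_3$, which succeeds because (ii) and the skew-symmetry identity $\im M_{22}=b^\perp$ place $M_{23}v_3$ in the image; finally solve $M_{13}^T v_1=M_{33}v_3-M_{23}^T v_2$, which succeeds because pairing the right-hand side with $v_3$ gives $v_3^T M_{33}v_3=0$ (skew) and $\langle v_2,M_{23}v_3\rangle=-\langle v_2,M_{22}v_2\rangle=0$ (using the solved equation and skew-symmetry again), so the right-hand side lies in $v_3^\perp=\im M_{13}^T$. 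A dimension count on $\tau$ (generic rank $r_2+r_3-1$, since $\rk M_{22}=r_2-1$) shows that the generic $\ker\varphi$ sits inside $E_2^*$, so the new kernel vector with $v_3\neq 0$ is linearly independent from it, yielding $\dim\ker\varphi(p)\geq 2$ and hence $\geq 3$ by parity. Thus $Z\subseteq X$ and $\codim X\leq 2$. The main obstacle is exactly this hard case: spotting the correct second condition (ii) and verifying that skew-symmetry of $M_{22}$ and $M_{33}$ makes the third equation solvable automatically, avoiding what would otherwise be an extra codimension-$1$ obstruction that would kill the count.
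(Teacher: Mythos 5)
Your reduction via the parity of the kernel of a skew form, and the split into the cases $r_1\ge r_3+2$, $r_1=r_3+1$, $r_1=r_3$, runs parallel to the paper's argument, and your pointwise construction of an extra kernel vector in the hard case (solving $M_{22}v_2=-M_{23}v_3$ and then $M_{13}^{T}v_1=M_{33}v_3-M_{23}^{T}v_2$ using skew-symmetry) is in essence the paper's identity expressing $\varphi^{\wedge u}$ through the auxiliary map $\kappa=(\kappa_1,\varphi_{E_1,E_2^c})$. But there is a genuine gap: you never use the minimality of $E$ (a standing assumption in the proof of Theorem \ref{Buchsbaum CY}), and the statement is false without it. Indeed, take a minimal pair $(E_0,\varphi_0)$ with $X=\Pf(\varphi_0)$ and set $E=E_0\oplus\mathcal{O}_{\mathbb{P}^6}(a)\oplus\mathcal{O}_{\mathbb{P}^6}(-a-1)$, with $\varphi$ equal to $\varphi_0$ plus the constant $1\in H^0(\mathcal{O}_{\mathbb{P}^6}(a)\otimes\mathcal{O}_{\mathbb{P}^6}(-a-1)\otimes\mathcal{O}_{\mathbb{P}^6}(1))$ and zero mixed blocks; then $\Pf(\varphi)=X$, while $E_1=\mathcal{O}_{\mathbb{P}^6}(a)$ and $E_2=E_0\oplus\mathcal{O}_{\mathbb{P}^6}(a)$ satisfy $\varphi_{E_1,E_2}=0$ with $\rk E_1+\rk E_2=2u+1$. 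In your notation this is the case $r_1=r_3$ with $M_{13}$ a nonzero constant, so your locus (i) $\{\det M_{13}=0\}$ is empty and no contradiction arises. The same emptiness problem undermines your case $r_1=r_3+1$: a degeneracy locus of expected codimension $2$ has codimension at most $2$ only when it is nonempty, and it can be empty, namely when $\varphi_{E_1,E_2^c}$ has maximal rank at every point. These maximal-rank-everywhere subcases are exactly where the paper does additional work: an everywhere nondegenerate map forces a splitting of the type $E_2^c\simeq E_1^{*}(-1)\oplus L$ (the relevant $\Ext^1$ vanishes because the components are line bundles and twisted $\Omega^1$'s), which produces constant blocks in $\varphi$; these vanish by Lemma \ref{lemma no nonzero constant terms in Pfaffian} and minimality, giving the contradiction. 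Your proof must incorporate this step, or an equivalent use of minimality, in both cases.

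A secondary point: even granting $\{\det M_{13}=0\}\neq\emptyset$, you assert but do not prove that condition (ii) cuts out a nonempty subset of this hypersurface; a priori the section $\langle b,M_{23}v_3\rangle$ could be nonvanishing there, and your solvability argument also needs both coranks to be exactly $1$, which holds only at suitably generic points. The paper sidesteps both issues at once: it sets $Y=\Pf(\varphi)\cap D_{\rk E_1-1}(\varphi_{E_1,E_2^c})$, which is automatically nonempty once the determinantal hypersurface is (a hypersurface meets the threefold $\Pf(\varphi)$ in $\mathbb{P}^6$), identifies $Y$ with the degeneracy locus $D_{\rk E_1-1}(\kappa)$ of expected codimension $2$, and concludes that the nonempty $Y$ has codimension at most $2$, contradicting $Y\subset\Pf(\varphi)$. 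Recasting your conditions (i) and (ii) as such a determinantal locus, together with the minimality argument above, would close the gaps.
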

\begin{proof}
Consider the map $\varphi_{E_1,E_2^c}$, where \emph{$E_2^c$ is the
subbundle of $E$} being the sum of those components of $E$ which are not contained
in $E_2$.
Under our assumptions, we clearly have $D_{2u-1} (\varphi)\supset D_{(\rk(E_1)-2)}(\varphi_{E_1,E_2^c})$.
Now, since $D_{2u-1} (\varphi)$ is not the whole space, we have $2u+1-\rk E_2= \rk
E_2^c\geq \rk E_1 -1$ thus $\rk(E_1)+\rk(E_2)\leq 2u+2$.

To exclude $\rk(E_1)+\rk(E_2)=2u+2$, we observe that in such case we have
$D_{(\rk(E_1)-2)}(\varphi_{E_1,E_2^c})$ is either empty or of codimension at
most 2. It cannot be of codimension at most 2 as it is contained in a Pfaffian variety, hence, it must be empty. If it is empty then $ \varphi_{E_1,E_2^c}$
induces an embedding of vector bundles $E_1^*(-1)\to E_2^c$. This means that we have an exact sequence
$$0\to E_1^*(-1)\to E_2^c \to L\to 0,$$
where $L$ is a line bundle on $\mathbb{P}^6$. Now, since $E_1$ is a direct sum of line bundles  and of twisted first differentials on $\mathbb{P}^6$, we have $\Ext^1_{\mathbb{P}^6}(L,E_1^*(-1))=0$.
We thus have  $E_2^c\simeq E_1^*(-1)\oplus L$. It follows that $ \varphi_{E_1,E_2^c}$ consists of blocks of the form
$\varphi^a_{i,j}$ for some $i,j\in \mathbb{Z}$ with constant entries which by minimality of $E$ and Lemma \ref{lemma no nonzero constant terms in Pfaffian} are zero. 
This leads to a contradiction with $\varphi_{E_1,E_2^c}$ being an embedding and proves that
$\rk(E_1)+\rk(E_2)\neq 2u+2$.

To exclude $\rk(E_1)+\rk(E_2)=2u+1$, we shall prove that, in this case, we have  $Y=\Pf(\varphi)\cap
D_{(\rk(E_1)-1)}(\varphi_{E_1,E_2^c})$ is contained in $\Pf(\varphi)$ and is of codimension at most 2 in
$\mathbb{P}^6$.
To see the latter, we describe  $Y$ as a degeneracy locus of a map between
vector bundles of expected codimension 2. More precisely, we claim that $Y=D_{(\rk(E_1)-1)}
(\kappa),$ where
$\kappa=(\kappa_1,\kappa_2): E_1^*(-1) \to (\det E_3)(u-\rk E_1) \oplus E_2^c$ with $\kappa_2=
\varphi_{E_1,E_2^c}$ and $$\kappa_1=\varphi_{E_1,E_3}\wedge
(\varphi_{E_3,E_3})^{\wedge(u-\rk(E_1))} \in H^0((E_1\otimes \det(E_3))(u+1-\rk E_1)).$$ 
Indeed, the claim follows directly from the following observation:

\begin{multline}\varphi^{\wedge u}=(\kappa_1 \wedge (\varphi_{E_1,E_2^c})^{\wedge(\rk(E_1)-1)}, (\varphi_{E_1,E_2^c})^{\wedge \rk E_1} \wedge \gamma)
\\ \in H^0((\det E_3\otimes \det(E_1)\otimes \bigwedge^{rk E_1-1} E_2^c) (u))\oplus H^0( (\det(E_1)\otimes \bigwedge^{\rk E_3 -1} E_3 \otimes \det E_2^c)(u))
\subset H^0(\bigwedge^{2u} E (u))
\end{multline}
for some $\gamma \in \bigwedge^{\rk E_3 -1} E_3$.

Now, if $D_{(\rk(E_1)-1)}(\varphi_{E_1,E_2^c})$ is non-empty then it is
a hypersurface in $\mathbb{P}^6$ and in consequence $Y=\Pf(\varphi)\cap
D_{(\rk(E_1)-1)}(\varphi_{E_1,E_2^c})$ is also nonempty. The codimension of $Y$ is then at most $2$ giving a contradiction with $Y\subset \Pf(\varphi)$.
If $D_{(\rk(E_1)-1)}(\varphi_{E_1,E_2^c})$ is empty we have $E_2^c\simeq E_1^*(-1)$ and we get a contradiction
with the minimality of $E$ as in the case $\rk(E_1)+\rk(E_2)=2u+2$.
%
\end{proof}
Inequality \ref{rkE1+rkE2>E}, holding under the assumption that there exists an $i\in \{1\dots n_{\mathrm{neg}}\}$ such that $i+1>n_{pos}$ or that $k_{i+1}+l_i < 0$, together with Lemma  
\ref{lemma no nonzero constant terms in Pfaffian} gives a contradiction with Lemma \ref{lem too big blocks of zeroes} and thus finishes the proof of Lemma \ref{lem first + last>0}.
\end{proof}

The following is a straightforward Corollary from Theorem \ref{Buchsbaum CY}.
\begin{cor}\label{Cor from Buchsbaum CY}
All quasi Buchsbaum Calabi--Yau threefolds in $\mathbb{P}^6$ are arithmetically Buchsbaum. Moreover, arithmetically Cohen Macaulay Calabi--Yau threefolds in $\mathbb{P}^6$ are exactly those which correspond to case (1)
in Theorem \ref{Buchsbaum CY}.
\end{cor}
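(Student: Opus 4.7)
The strategy is to exploit the explicit classification of Theorem \ref{Buchsbaum CY} case-by-case. Both statements reduce to inspection of the allowed bundles $E$ in that list; the aCM characterization is essentially formal, while the arithmetically Buchsbaum part requires a short cohomological argument using the exact sequence of a hyperplane section.

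For the aCM statement, if $X$ falls in case (1) then $E$ is a direct sum of line bundles, so the Pfaffian sequence \eqref{pfaffian sequence} is a locally free resolution of $\mathcal{O}_X$ of length $3=\codim X$ by sums of line bundles on $\mathbb{P}^6$. The vanishing of intermediate cohomology of line bundles on projective space then yields $H^i_*(\mathcal{I}_X)=0$ for $1\le i\le 2$, so $X$ is aCM. Conversely, if $X$ is aCM then $HR(X)=0$; hence the shifted module $M$ vanishes, $Syz^1(M)=0$, and the minimal bundle $E$ realising $X$ as a Pfaffian carries no $\Omega^1$ summand, which by inspection of the list in Theorem \ref{Buchsbaum CY} forces $X$ into case (1).

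For the arithmetically Buchsbaum statement, case (1) is automatic since aCM varieties have aCM, a fortiori quasi-Buchsbaum, linear sections. In cases (2a), (2b) one takes a general hyperplane $H\subset\mathbb{P}^6$, sets $S=X\cap H$, and uses the exact sequence $0\to\mathcal{I}_X(-1)\to\mathcal{I}_X\to i_*\mathcal{I}_{S,H}\to 0$. The quasi-Buchsbaum hypothesis on $X$ kills multiplication by the defining form of $H$ on both $H^1_*(\mathcal{I}_X)$ and $H^2_*(\mathcal{I}_X)$, producing, for every $k\in\mathbb{Z}$, a short exact sequence
\[
0\to H^1(\mathcal{I}_X(k))\to H^1(\mathcal{I}_{S,H}(k))\xrightarrow{\partial} H^2(\mathcal{I}_X(k-1))\to 0.
\]
A diagram chase shows that for any other linear form $g$ on $\mathbb{P}^6$ restricting to $\bar g$ on $H$, the action of $\bar g$ on $H^1(\mathcal{I}_{S,H}(k))$ factors through a lift $H^2(\mathcal{I}_X(k-1))\to H^1(\mathcal{I}_X(k+1))$. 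In cases (2a), (2b) the module $HR(X)$ is explicitly computable from \eqref{pfaffian sequence} and Bott's formula ($H^1(\mathbb{P}^6,\Omega^1(m))=\mathbb{C}$ iff $m=0$), giving a one-dimensional module supported in a single degree; $H^2_*(\mathcal{I}_X)$ is described analogously via Serre duality on $X$. A direct degree-count then forces the lift to vanish for every $k$, so $\bar g$ annihilates $H^1_*(\mathcal{I}_{S,H})$ and $S$ is quasi-Buchsbaum; special hyperplanes are treated by the same diagram chase. The main obstacle is exactly this extension step: annihilation of the two ends of a module extension does not in general give annihilation of the middle, so genuine input beyond the definition of quasi-Buchsbaum is required, and it is supplied by the explicit Pfaffian resolution and the small size of the HR modules appearing in cases (2a), (2b).
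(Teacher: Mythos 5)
Your argument is correct and is essentially the intended ``straightforward'' inspection of the classification: case (1) gives a length-three resolution of $\mathcal{I}_X$ by sums of line bundles (hence aCM), aCM forces $HR(X)=0$ and hence a case-(1) bundle, while in cases (2a),(2b) the Pfaffian sequence and Bott's formula pin down the Hartshorne--Rao modules of $X$ and of its hyperplane sections. One simplification worth noting: for any Calabi--Yau threefold in $\mathbb{P}^6$ one has $H^2_*(\mathcal{I}_X)\cong H^1_*(\mathcal{O}_X)=0$ (Kodaira vanishing and $h^1(\mathcal{O}_X)=0$, exactly as used in Lemma \ref{l3}), so in your sequence the term $H^2(\mathcal{I}_X(k-1))$ vanishes, $H^1_*(\mathcal{I}_{S,H})$ is simply a quotient of the graded module $H^1_*(\mathcal{I}_X)$ and is therefore annihilated by the maximal ideal whenever $X$ is quasi-Buchsbaum; thus the ``extension step'' you single out as the main obstacle never arises, and no lift, Serre-duality description of $H^2_*$, or degree count is needed.
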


\section{The degrees of Calabi--Yau threefolds in quadrics} \label{sec deg CY in quadrics}
The approach to the classification of Calabi--Yau threefold in $\PP^6$ by considering the minimal degrees of hypersurfaces that contain the Calabi--Yau threefolds is inspired by \cite{H}.
In this section, we shall work on quadrics of different ranks and dimensions. We
shall use the following notation.
For $k\leq 5$, we shall denote by $Q^r_k\subset \mathbb{P}^{k+1}$ a
$k$-dimensional quadric
hypersurface with corank $r$.

\begin{prop} \label{gen typu w gladkiej kwadryce} Let $Q^0_4 \subset
\mathbb{P}^5$ be a smooth 4-dimensional quadric hypersurface. If $S$ is a
nondegenerate canonically
embedded surface of general type of degree $d$ contained in $Q^0_4$,
then  $12\leq d\leq 14$.
\end{prop}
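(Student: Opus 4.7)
My plan is to compute the self-intersection number $[S]\cdot[S]$ of $S$ inside the four-fold $Q^0_4$ in two different ways and compare. On one side, $[S]^2=\int_S c_2(N_{S/Q})$ comes from the normal bundle of the embedding $S\hookrightarrow Q^0_4$. On the other side, $[S]^2$ can be read off directly in the middle cohomology of the smooth quadric $Q^0_4\cong G(2,4)$, which has rank $2$ and is generated by the classes $P_1,P_2$ of the two families of $2$-planes, with intersection numbers $P_i^2=1$, $P_1\cdot P_2=0$ and $H^2=P_1+P_2$. Matching the two expressions will give a quadratic inequality in $d$ whose solution set is exactly $[12,14]$.

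\textbf{Normal bundle side.} From the sequence $0\to T_Q\to T_{\PP^5}|_Q\to\oo_Q(2)\to 0$, truncating modulo $h^3$ (writing $h=H|_S$) I would extract $c(T_Q|_S)=(1+h)^6(1+2h)^{-1}$, hence $c_1(T_Q|_S)=4h$ and $c_2(T_Q|_S)=7h^2$. Feeding this into the normal bundle sequence $0\to T_S\to T_Q|_S\to N_{S/Q}\to 0$ and using $K_S=h$ (canonical embedding) yields $c_1(N_{S/Q})=5h$ and $c_2(N_{S/Q})=12d-c_2(S)$. Since $S$ is nondegenerate and canonically embedded in $\PP^5$ we have $p_g(S)=6$; combined with $q(S)=0$ (see the remark at the end), Noether's formula gives $c_2(S)=12\chi(\oo_S)-K_S^2=84-d$, so $[S]^2=13d-84$.

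\textbf{Chow-ring side, conclusion, and delicate point.} Writing $[S]=aP_1+bP_2$ with $a,b\in\mathbb{Z}_{\geq 0}$ (which holds since the effective cone of codimension-$2$ cycles on $Q^0_4$ is generated by $P_1$ and $P_2$), the degree of $S$ forces $a+b=[S]\cdot H^2=d$, and $[S]^2=a^2+b^2$. The elementary inequality $a^2+b^2\geq(a+b)^2/2$ then gives $13d-84\geq d^2/2$, equivalent to $(d-12)(d-14)\leq 0$, i.e.\ $12\leq d\leq 14$, as desired. The only nontrivial input is the vanishing $q(S)=0$: in the paper's intended application $S$ arises as a generic hyperplane section of a Calabi--Yau threefold $X\subset\PP^6$ contained in a smooth quadric, and then $q(S)=0$ follows at once from Kodaira vanishing on $X$ (from $0\to\oo_X(-1)\to\oo_X\to\oo_S\to 0$ and $H^i(\oo_X(-1))=0$ for $i<3$). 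In full generality the case $q\geq 1$ can be ruled out by combining Debarre's inequality $K_S^2\geq 2p_g=12$ with the parity and integrality constraints on $(a,b)$ coming from the refined identity $a^2+b^2=13d+12q-84$; I expect the finite case check to be the most tedious (but not the conceptually hardest) step.
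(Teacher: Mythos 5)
Your proof is correct and follows essentially the same route as the paper: the self-intersection (double point) formula $[S]^2=c_2(N_{S|Q^0_4})$ on the smooth quadric, the normal bundle Chern class computation giving $c_2(N_{S|Q^0_4})=12d-c_2(S)$, Noether's formula with $\chi(\mathcal{O}_S)=7$, and the decomposition $[S]=aP_1+bP_2$ in $A^2(Q^0_4)$ with $P_i^2=1$, $P_1\cdot P_2=0$. The only cosmetic difference is that you conclude via $a^2+b^2\geq (a+b)^2/2$ whereas the paper demands an (integer) solution $a$ of the same quadratic, which is the same discriminant condition; your explicit flagging of $q(S)=0$ concerns a hypothesis the paper also uses implicitly through $\chi(\mathcal{O}_S)=7$.
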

\begin{proof} Take $S$ a canonical surface of degree $d$ contained in a smooth 4-dimensional quadric  $Q^0_4$.
By the analog of the double point formula for quadrics (see \cite[thm.~9.3]{Ful}), the
second Chern class of the normal bundle $c_2(N_{S|Q^0_4})= S\cdot S$.
Now, the Chow group $A^2(Q^0_4)$ of $Q^0_4$ has two generators corresponding to two
families of
planes on $Q^0_4$. We shall denote them $\theta_1$ and $\theta_2$. We have
$\theta_1^2=\theta_2^2=1$ and $\theta_1\cdot \theta_2=0$.
 By definition we know that $S\cdot (\theta_1+\theta_2)=d$, hence
 $S\sim a\theta_1+(d-a)\theta_2$ for some $a\in \mathbb{Z}$.
It follows that $$c_2(N_{S|Q^0_4})=2a^2+d^2-2da.$$

On the other hand, from the exact sequence
$$0\to T_{S}\to T_{{Q^0_4}}|_S\to N_{S|{Q^0_4}}\to 0,$$
we find that $c_1(N_{S|{Q^0_4}})=5h$ and
$$c_2(N_{S|{Q^0_4}})=12h^2-c_2(S)=12d-c_2(S),$$
 where $h=K_S$ is the restriction of the hyperplane class from $\mathbb{P}^5$ to
$S$. It follows that $$c_2(S)=12d-2a^2-d^2+2ad.$$
On the other hand, using the Riemann-Roch formula we have\\
$7=\frac{1}{12}(d+c_2(S))$, thus $c_2(S)=84-d$. By comparing both formulas, we
infer
\begin{equation} \label{rownanie kwadratowe na d dla gladkiej kwadryki}
84-d=12d-2a^2-d^2-2ad.
\end{equation}
The only integers $d$ for which this equation has a solution are $d=12,13,14$.
\end{proof}

Let us now consider a similar problem for a singular quadric.
\begin{prop}\label{prop1} The degree $d$ of a nondegenerate canonically embedded surface of
general type $S\subset\mathbb{P}^5$ contained in a quadric cone $Q_4^1\subset \mathbb{P}^5$ is either
$12$ or $13$ or $14$.
\end{prop}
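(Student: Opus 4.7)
Following the approach of the proof of Proposition \ref{gen typu w gladkiej kwadryce}, the plan is to mirror that argument on a desingularization of $Q_4^1$ rather than on $Q_4^1$ itself. Realize $Q_4^1$ as the projective cone over a smooth $3$-dimensional quadric $Q_3\subset\mathbb{P}^4$ with vertex $v$, and let $\pi\colon \tilde Q\to Q_4^1$ be the blow-up of $v$, so $\tilde Q=\mathbb{P}(\mathcal{O}_{Q_3}\oplus \mathcal{O}_{Q_3}(1))$ is a $\mathbb{P}^1$-bundle over $Q_3$ with projection $p$. With $\xi=c_1(\mathcal{O}_{\mathbb{P}(\mathcal{E})}(1))=\pi^{\ast}h$ (where $h$ is the hyperplane class of $\mathbb{P}^5$), and $H,L$ the generators of $A^1(Q_3)$ and $A^2(Q_3)$ satisfying $H^2=2L$, the projective bundle formula gives $A^2(\tilde Q)=\mathbb{Z}\,p^{\ast}L\oplus\mathbb{Z}\,\xi\cdot p^{\ast}H$, and the exceptional divisor of $\pi$ has class $E_0=\xi-p^{\ast}H$.

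Let $\tilde S$ be the proper transform of $S$ and write $[\tilde S]=a\,p^{\ast}L+b\,\xi\cdot p^{\ast}H$. Applying the projection formula to $\xi^2\cdot[\tilde S]$ and to $E_0\cdot p^{\ast}H\cdot[\tilde S]$ gives, respectively, $d=a+2b$ and $a=m$, where $m$ is the multiplicity of $S$ at $v$. Since $S$ is smooth, $m\in\{0,1\}$.

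On the smooth fourfold $\tilde Q$ one has the self-intersection identity $[\tilde S]^2=c_2(N_{\tilde S/\tilde Q})$. Combined with the normal bundle sequence it yields
\[
c_2(N_{\tilde S/\tilde Q})=c_2(T_{\tilde Q})|_{\tilde S}-c_2(\tilde S)-c_1(T_{\tilde S})\cdot c_1(N_{\tilde S/\tilde Q}).
\]
Computing $K_{\tilde Q}=-2\xi-2p^{\ast}H$ from $\omega_{\tilde Q/Q_3}\simeq\mathcal{O}(-2)\otimes p^{\ast}\mathcal{O}(1)$, and using the Whitney formula together with $c(T_{Q_3})=1+3H+4H^2+2H^3$ to get $c_2(T_{\tilde Q})=6\xi\cdot p^{\ast}H+2p^{\ast}L$, and inputting Noether's formula with $\chi(\mathcal{O}_S)=7$ exactly as in the smooth quadric case (so that $c_2(S)=84-d$), one obtains a single Diophantine relation in $b$ depending on $m$. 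In the case $m=0$ one has $\tilde S\cong S$ and $K_{\tilde S}=\xi|_{\tilde S}$, and the relation becomes $b^2-13b+42=0$ with integer solutions $b\in\{6,7\}$, giving $d\in\{12,14\}$. In the case $m=1$ the proper transform is the blowup $\tilde S=\text{Bl}_v S$ with exceptional $(-1)$-curve $E_{\tilde S}=E_0\cap\tilde S$, so one must use $c_2(\tilde S)=c_2(S)+1$ and $K_{\tilde S}=\sigma^{\ast}K_S+E_{\tilde S}$, with $p^{\ast}H|_{\tilde S}=\xi|_{\tilde S}-E_{\tilde S}$; tracking these extra contributions produces $(b-6)^2=0$, yielding $b=6$ and $d=13$.

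The main delicacy is the case $v\in S$: the proper transform is no longer isomorphic to $S$, and the extra $(-1)$-curve $E_{\tilde S}$ contributes non-trivially to several of the intersection numbers on $\tilde S$, in particular to $c_1(T_{\tilde S})\cdot c_1(N_{\tilde S/\tilde Q})$ and to the restrictions of $p^{\ast}H$. These corrections have to be controlled correctly before the Chern-number computation closes up and produces the single value $d=13$. Together with the values $d\in\{12,14\}$ obtained in the generic case $v\notin S$, this gives the claimed range $12\leq d\leq 14$.
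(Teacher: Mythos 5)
Your proposal is correct and follows essentially the same route as the paper: blow up the vertex to get $\mathbb{P}(\mathcal{O}_{Q_3^0}\oplus\mathcal{O}_{Q_3^0}(1))$, write the class of the proper transform in $A^2$, apply the double-point/self-intersection formula together with the tangent-bundle sequences and Noether's formula with $\chi(\mathcal{O}_S)=7$, and split into the cases $v\notin S$ and $v\in S$. The only (cosmetic) difference is that you fix $a=m\in\{0,1\}$ via the multiplicity at the vertex before solving, whereas the paper solves the quadratic first and then uses $(\Xi|_{S'})^2\in\{0,-1\}$; your intermediate identities ($c_2(T_{\tilde Q})=6\,\xi\cdot p^{\ast}H+2\,p^{\ast}L$, $b^2-13b+42=0$ for $m=0$, and $(b-6)^2=0$ for $m=1$) all check out.
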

\begin{proof} Take $S$ a canonical surface of degree $d$ contained in a 4-dimensional quadric  $Q^1_4$ of corank 1.
Let us consider
the projective bundle $$g\colon
\mathfrak{F}:=\mathbb{P}(\mathcal{O}_{Q^0_3}\oplus \mathcal{O}_{Q_3^0}(1))\to Q_3^0,$$
of the 1-dimensional quotients of the vector bundle
$F:=\mathcal{O}_{Q_3^0}\oplus \mathcal{O}_{Q_3^0}(1).$
The linear system of its tautological line bundle $\mathcal{O}_{\mathfrak{F}}(1)$
defines a morphism $\pi\colon \mathfrak{F}\to Q_4^1\subset \mathbb{P}^5$. Observe that $\pi$ is
the blow up of $Q_4^1$ in the singular point.
 Denote by $\xi$ and $h$ the pull back to $\mathfrak{F}$ by $\pi$ and $g$
respectively of the hyperplane sections of $Q_4^1$ and $Q_3^0$.
Let $S'$ be the proper transform of $S\subset Q_4^1$ on $\mathfrak{F}$. By the
Grothendieck relation, we have $\xi^2=h\xi$ so that $A^2(\mathfrak{F})$ is
generated by $\xi^2$ and $h^2$. We can, hence, write the class of $S'$ as
$[S']=a\xi^2+
b h^2$ where $a,b\in \mathbb{Q}$.

Observe that $S'$ is smooth and either isomorphic to $S$ or the blow up of $S$
in the center of the cone $Q_4^1$ (when $S$ contains this center).
From the double point formula, we infer $$c_2(N_{S'|\mathfrak{F}})=(a\xi^2+ b
h^2)^2=\xi^4(a^2+2ab)=2a(d-a),$$ since $d=[S']\xi^2=2a+2b.$
Now, let us compute this Chern class using the exact sequence   $$0\to T_{S'}\to
T_{\mathfrak{F}}|_{S'}\to N_{S'|\mathfrak{F}}\to 0.$$
We deduce that
$$c_2(N_{S'|\mathfrak{F}})=c_2(\mathfrak{F})-c_2(S')-c_1(S')c_1(N_{S'|\mathfrak{
P}}).$$
Next, we use, $$0\to \mathcal{O}_{\mathfrak{F}}\to g^{\ast}F^{\ast}(1)\to
T_{\mathfrak{F}|Q_3^0} \to 0 $$
$$0\to T_{\mathfrak{F}|Q_3^0} \to T_{\mathfrak{F}}\to g^{\ast} T_{Q_3^0} \to 0
.$$
We obtain $c_2(N_{S'|\mathfrak{F}})=12d-84+2a$ using the fact that
$c_1(S')=(-2\xi+h)|_{S'}$ and the Noether formula $c_2(S')=84-h^2[S']=84-2a$.
We infer the equation: $$a^2+a(1-d)+6d-42=(a-6)(a-d+7)=0.$$
We thus deduce that either $a=6$ and $b=\frac{d}{2}-6$ or $a=d-7$ and
$b=7-\frac{1}{2}d.$

Now, the exceptional divisor $\Xi\subset \mathfrak{F}$ of $\pi$ is in the linear
system $|\xi-h|$.
Since $S$ is smooth, we have one of the following:
\begin{itemize}
 \item either $\Xi|_{S'}=0$ and $S'$ is isomorphic to $S$;
 \item or  $(\Xi|_{S'})^2=-1$ and $S'$ is the blow up of $S$ in one point.
\end{itemize}

In the first case, we deduce that $b=0$ so either $d=12$ or $d=14$.
In the second case, $b=\frac{1}{2}$ so $d=13$.
\end{proof}
\begin{thm}\label{tw kw} The degree $d$ of a nondegenerate Calabi--Yau threefold $X$ embedded
in a quadric in $\mathbb{P}^6$ of corank $\leq 2$ is bounded by $12\leq d\leq 14$.

Moreover, the degree of a nondegenerate Calabi-Yau threefold contained in a smooth quadric
cannot be 13.
\end{thm}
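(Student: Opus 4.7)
The plan is to cut $X$ with a general hyperplane $H\subset\PP^6$ and reduce the claim to Propositions \ref{gen typu w gladkiej kwadryce} and \ref{prop1}. By Bertini and adjunction the surface $S=X\cap H$ is smooth with $K_S=(K_X+H)|_S=H|_S$, so it is a canonically embedded surface of general type of the same degree $d$ as $X$. Linear normality of $X$ (Proposition \ref{pro linearly normal}) combined with $h^1(\oo_X)=0$ gives $h^0(\oo_S(1))=h^0(\oo_X(1))-1=6$, which shows that $S$ is nondegenerate in $H\cong\PP^5$.

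I would then split into three cases according to the corank $r\in\{0,1,2\}$ of $Q=Q^r_5$. For $r=0$ a general $H$ meets $Q^0_5$ in a smooth four-dimensional quadric $Q^0_4\supset S$, and Proposition \ref{gen typu w gladkiej kwadryce} yields $12\leq d\leq 14$. For $r=1$ the singular locus of $Q$ is a single point, which a general $H$ avoids, so again $H\cap Q=Q^0_4$ is smooth and the same proposition applies. For $r=2$ the singular locus is a projective line $L$, which a general $H$ meets transversely in one point; consequently $H\cap Q$ is a quadric cone $Q^1_4\supset S$ and Proposition \ref{prop1} gives the same range $12\leq d\leq 14$.

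The remaining statement, that a nondegenerate Calabi--Yau threefold $X\subset Q^0_5$ cannot have degree $13$, I would deduce from the cohomology of the smooth five-dimensional quadric. By the Lefschetz hyperplane theorem the restriction $A^*(\PP^6)\to A^*(Q^0_5)$ is an isomorphism in degrees $<5$, so $A^2(Q^0_5)=\mathbb{Z}\cdot h^2$, where $h$ is the hyperplane class. Since $\deg Q^0_5=2$ we have $h^5=2\,[\mathrm{pt}]$ in $A^5(Q^0_5)$, so any codimension-two subvariety $Z\subset Q^0_5$ has class $ch^2$ with $c\in\mathbb{Z}$ and $\PP^6$-degree $h^3\cdot[Z]=2c$. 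Applied to $X$, this forces $d$ to be even, which excludes $d=13$. An equivalent way to phrase the same argument is that the hyperplane section $S\subset Q^0_4$ inherits its class from $Q^0_5$, and since $h^2|_{Q^0_4}=\theta_1+\theta_2$, the class of $S$ is of the form $c(\theta_1+\theta_2)$ rather than the general $a\theta_1+b\theta_2$; in particular $a=b$, which is incompatible with the only integer solutions $(a,b)\in\{(6,7),(7,6)\}$ of equation \eqref{rownanie kwadratowe na d dla gladkiej kwadryki} when $d=13$.

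The main obstacle is very minor: it is just verifying the local structure of $H\cap Q^r_5$ in each corank case, namely that the corank drops by one exactly when the hyperplane meets the vertex in a single point (case $r=2$) and is a smooth quadric otherwise (cases $r=0,1$). Once this geometric setup is in place, everything else is a direct application of the preceding propositions and, for the final parity claim, of the Picard-rank-one calculation in $Q^0_5$.
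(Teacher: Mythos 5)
Your proof is correct and follows essentially the same route as the paper: reduce to a general hyperplane section and apply Propositions \ref{gen typu w gladkiej kwadryce} and \ref{prop1} for the bound, then use $A^2(Q^0_5)\cong\mathbb{Z}$ and its restriction to a hyperplane section $Q^0_4$ to force $a=b$ and exclude $d=13$. Your parity formulation (every threefold in a smooth $Q^0_5$ has even degree) is just a cleaner packaging of the same Lefschetz-type argument the paper uses, and your corank-by-corank description of $H\cap Q^r_5$ merely fills in a detail the paper leaves implicit.
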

\begin{proof} First, remark that a generic hyperplane section of $X\subset
\mathbb{P}^6$ is a canonically embedded surface of general type contained 
either in a smooth quadric $Q_4^0$ or in a corank 1 quadric $Q_4^1 \subset \mathbb{P}^5$. We obtain the bound on the degree from Propositions
\ref{prop1} and \ref{gen typu w gladkiej kwadryce}.

To prove the second part, observe that  $A^2(Q_5^0)$ has only one generator $\theta$
and that the restriction $\theta|_H$ to a hyperplane section $H=Q^0_4$ is $\theta_1+\theta_2$ in the notation of Proposition \ref{prop1}. It
follows that, in this case, in
the proof of Proposition \ref{gen typu w gladkiej kwadryce}, we may additionally
assume $a=b$.
It is now enough to observe that the equation (\ref{rownanie kwadratowe na d dla
gladkiej kwadryki}) has no solution for $d=13$ giving $a=b$. In fact,
when $d=13$, we must have $S\sim 6\theta_1+7\theta_2$.
\end{proof}
To conclude the proof of Theorem \ref{Main theorem}, it is enough to prove the
following:
\begin{prop}\label{no CY in quadrics of rank 4}
If $X$ is a smooth Calabi--Yau threefolds contained in a quadric $Q_5^r$ of corank $r\geq
3$ in $\mathbb{P}^6$ then $X$ is contained in a linear space contained in $Q_5^r$.
\end{prop}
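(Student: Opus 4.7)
I would proceed case by case on $r\in\{3,4,5,6\}$.

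The cases $r=5,6$ are immediate. For $r=6$ the quadric $Q^6_5=2L$ is a double hyperplane, so $X\subset L\cong\mathbb{P}^5\subset Q^6_5$; for $r=5$ it splits as a union $L_1\cup L_2$ of two hyperplanes, and by irreducibility $X$ lies in one of them, which is a $\mathbb{P}^5\subset Q^5_5$.

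For $r=3,4$ the quadric $Q^r_5$ is a cone with vertex $V=\mathbb{P}^{r-1}$ over a smooth quadric $B=Q^0_{5-r}$, with $B\cong\mathbb{P}^1$ for $r=4$ and $B\cong\mathbb{P}^1\times\mathbb{P}^1$ for $r=3$. The maximal linear subspaces of $Q^r_5$ are $\mathbb{P}^4$'s through $V$, forming a single $\mathbb{P}^1$-pencil for $r=4$ and two $\mathbb{P}^1$-rulings for $r=3$; denote by $p$, respectively by $p_1,p_2$, the rational projections $Q^r_5\dashrightarrow\mathbb{P}^1$ from $V$ to the corresponding factor of $B$. The key identity is a divisor-class decomposition on $Q^r_5$: since $B$ is embedded in $\mathbb{P}^{6-r}$ by $\mathcal{O}(2)$ (the conic, for $r=4$) respectively by $\mathcal{O}(1,1)$ (the Segre quadric, for $r=3$), the hyperplane class of $Q^r_5$ decomposes as $H_{Q^r_5}\sim 2\mathcal{P}$ for $r=4$ or $H_{Q^r_5}\sim\mathcal{P}_1+\mathcal{P}_2$ for $r=3$, where $\mathcal{P},\mathcal{P}_1,\mathcal{P}_2$ are the classes of $\mathbb{P}^4$-fibers of the corresponding rulings. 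Concretely, in coordinates with $Q^4_5\colon y_0y_1-y_2^2=0$ and $Q^3_5\colon x_0x_3-x_1x_2=0$, the pencils $|ty_0-sy_2|$ and $|\beta x_0-\alpha x_2|$ realize these decompositions, since each general member intersects $Q^r_5$ in the sum of two $\mathbb{P}^4$-fibers.

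Suppose for contradiction that $X$ is not contained in any $\mathbb{P}^4\subset Q^r_5$. Then the rational maps $p|_X$ (for $r=4$) or $p_1|_X,p_2|_X$ (for $r=3$) are non-constant; after resolving their indeterminacy along $V\cap X$ by a suitable sequence of blow-ups, one obtains a smooth birational model $\widetilde X\to X$ on which these maps extend to morphisms onto $\mathbb{P}^1$ with fibre classes $F_{\widetilde X},G_{\widetilde X}$ satisfying $F_{\widetilde X}^2=G_{\widetilde X}^2=0$ as $1$-cycles. An intersection-theoretic computation on $\widetilde X$ using the above divisor-class identities, restricted to $X$ and pulled back to $\widetilde X$, together with the vanishing $F_{\widetilde X}^2=G_{\widetilde X}^2=0$, would then force $H_X^3=0$, contradicting $H_X^3=\deg X>0$. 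Thus $X$ must lie in some $\mathbb{P}^4\subset Q^r_5$, as required. For $r=3$ one furthermore observes that both $p_1|_X,p_2|_X$ cannot be simultaneously constant, since that would force $X\subseteq\mathbb{P}^3$, hence $X=\mathbb{P}^3$, contradicting $K_X=0$.

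The main obstacle I anticipate is the careful intersection-theoretic bookkeeping on $\widetilde X$: because $V\cap X$ can have codimension as low as one on $X$ and because the classes $\mathcal{P},\mathcal{P}_i$ on $Q^r_5$ are Weil but not Cartier at the vertex $V$, the pullback of the displayed class identities to $\widetilde X$ acquires exceptional contributions, and one must verify that these contribute zero to the relevant triple intersection so that the vanishing $H_X^3=0$ actually survives.
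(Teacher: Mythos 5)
Your reduction for $r\geq 5$, the description of the two rulings, and the class identities $H\sim 2\mathcal{P}$, $H\sim\mathcal{P}_1+\mathcal{P}_2$ are all correct and coincide with the paper's starting point. The genuine gap is the step you yourself flag as "bookkeeping": the claim that, after resolving the indeterminacy of the projections, the exceptional and vertex contributions drop out of the triple product so that $H_X^3=0$. This is not a verification one can expect to go through; it is false in the generality in which you invoke it, and it is exactly where all the content of the proposition lies. Concretely: when $X$ meets the vertex $V$ in a divisor (which is what actually happens in the corank $4$ case -- the paper shows that $X\cap \mathbf{P}$ contains a surface $\mathbf{S}$), the correct relation on $X$ is not $H\sim 2F$ but $H\sim \mathbf{S}+2X_y$, and $(\mathbf{S}+2X_y)^3$ is not zero; no contradiction comes from intersection numbers alone, and the paper has to run an adjunction computation on $\mathbf{S}$ to finish. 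In the corank $3$ case one of the two families can have a base curve inside $X\cap V$, so only one of $F_1^2$, $F_2^2$ vanishes and $(F_1+F_2)^3$ again need not be zero. Moreover, the nodal Calabi--Yau threefolds of degrees $11$ and $15$ inside corank $3$ quadrics (Propositions \ref{CY stopnia 11} and \ref{Cy stopnia 15}) show that your framework cannot force the vanishing: on a small resolution one has a smooth threefold carrying the two families with $\widetilde D_1+\widetilde D_2\sim\pi^*H$, and after resolving the remaining indeterminacy the "exceptional corrections" must add up to $\deg X\neq 0$. So smoothness of the embedded $X$ has to be used through genuine geometry, not through cancellation in a Chow computation.

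For comparison, the paper's proof proceeds in two steps: first, using Zak's theorem on tangencies, it shows that either $X$ lies in a linear space of $Q^r_5$ or the relevant system is base point free, i.e.\ $X$ is fibered by surfaces lying in $\mathbb{P}^4$'s of the ruling; second (Lemma \ref{no fibred CY in quadrics}), it excludes the fibered case by classifying the possible fibers via the double point formula (complete intersections $(2,3)$, $(1,4)$, or Horrocks--Mumford abelian surfaces), and then deriving a contradiction from an adjunction/degree computation on $\mathbf{S}=X\cap\mathbf{P}$ for one corank and from a Serre-construction plus Chern-class computation on a resolution of the quadric for the other. Your proposal would need essentially all of this machinery to control the terms you hope vanish, so as written it does not constitute a proof.
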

\begin{proof}
The proof will consist of two steps which will be formulated as lemmas.
\begin{lemm} Let $X$ be a smooth threefold contained in a quadric $Q_5^r$ of corank $r\geq
3$ in
$\mathbb{P}^6$. Then one of the following holds:
\begin{enumerate}
\item $X$ is contained in a linear space $L\subset Q_5^r$
\item $X$ is fibered by surfaces contained in
linear spaces contained in $Q^r_5$.
\end{enumerate}
\end{lemm}
\begin{proof}
If the quadric $Q_5^r$ is of corank $\geq 5$ then it decomposes into hypersurface
and $X$ being smooth must be contained in one of them.

If the quadric $Q_5^r$ has corank 4 then it is a cone $Q_5^4$ with center some
$\mathbf{P}\simeq \mathbb{P}^3$ and spanned over a smooth conic $Q_1^0$. Consider the hyperplane $H_l$
spanned by
$\mathbf{P}$ and any line in the plane spanned by $Q_1^0$. Observe that $H_l\cap Q$
decompose as a sum of two linear spaces $L_l^1$ and $L_l^2$.
It follows that $H_l\cap X$ is either the whole $X$ for some $l$ or
is a divisor on $X$ decomposed as $S_L=S_{l}^{1}+S_l^2$ with $S_l^1\subset
L^1_{l}$ and $S_l^2\subset L_l^2$.
In the first case the assertion follows as $H_l \cap Q_5^4$ is a union of two linear
spaces and $X$ is irreducible.
We, hence, need only to consider $X$ having a 1-dimensional linear system given
by surfaces $S_l^i$ which are clearly all in the same system independently on
$i\in \{1,2\}$.
Take two distinct points $q,q'\in Q_1^0$. For a 1-dimensional linear system on a smooth threefold,  we
have two possibilities:
\begin{itemize}
 \item the system is base point free, in which case we have a fibration of $X$
given by surfaces $S_q$ giving the assertion,
\item or the system has a base curve $C$.
\end{itemize}
 In the latter case,
we claim that $\mathbf{P}$ must be the space tangent to $X$ in every point of
this curve. Indeed, all hyperplanes containing $\mathbf{P}$ intersect $X$ in the
union of
two surfaces passing through $C$, hence, the intersection is singular in every
point of $C$. It follows that the tangent space to $X$ in any of the points of $C$ is contained
in $\mathbf{P}$ thus is equal to $\mathbf{P}$. We now use the famous Zak
theorem on
tangencies, which for smooth varieties is formulated as follows.
\begin{thm}[Zak \cite{Z}]
Let $X^n\subset \mathbb{P}^{n+a}$ be a subvariety not contained in a hyperplane.
Fix any linear space $L=\mathbb{P}^{n+k}\subset \mathbb{P}^{n+a},
0\leq k\leq a-1$. Then
$$\dim\{x\in X|\tilde{T}_x X\subset L\} \leq k,$$
where $\tilde{T}_x X$ denotes the embedded tangent space to $X$ in $x$.
\end{thm}
Zak's theorem implies that a tangent projective space to a threefold in $\mathbb{P}^6$
cannot be tangent to it in a curve. In particular $\mathbf{P}$ cannot be tangent to $X$ in $C$. The contradiction concludes the proof for $r=4$. 
Let us point out that from now on we shall consider $\tilde{T}_x X$ as standard notation for the embedded tangent space.

Finally, assume that $r=3$ i.e. the quadric $Q_5^r$ is of rank $4$. Then $Q_5^3$ is a cone with
center a plane $\mathbf{P}$ and spanned over a smooth quadric $Q_2^0$ of dimension $2$.
Consider the family $\{X_q\}_{q\in Q_2^0}$ consisting of intersections of $X$ with hyperplanes $H_q$
spanned by $\mathbf{P}$ and planes $\tilde{T}_q Q_2^0$ tangent to $Q_2^0$
parametrized by the tangency points. Observe that $\tilde{T}_qQ_2^0\cap Q_2^0$ consists of
two lines $l^q_1$ and $l_2^q$ with $q=l^q_1 \cap l_2^q$,
hence, $H_q\cap Q_5^3$ is the union of two 4-dimensional linear spaces $L_1^q$ and
$L_2^q$ spanned by $\mathbf{P}$ and lines $l^q_1$ and $l_2^q$
respectively.
If $X_q=X$ for some $q\in Q_2^0$ then $X$, being irreducible, is contained in one of
the two linear spaces $L_i^p$ for $i=1$ or $2$
and the assertion follows.
If $X_q\neq X$ for all $q\in Q_2^0$ then $X_q$ is decomposed as $X_q=S^q_1\cup
S^q_2$.
We thus have two 1-dimensional linear systems on $X$ and as in the previous
case we have the following possibilities
\begin{itemize}
 \item one of the linear systems is base point free giving us a fibration by
surfaces as in the assertion
\item both systems contain a base curve.
\end{itemize}
In the latter case, let us denote the base curves by $C_1$ and $C_2$. Then
$C_1\cap C_2\neq \emptyset$ as these are plane curves. Let us choose any point
$p\in C_1\cap C_2$.
Now, for every $q$, we have  $H_q\cap X=S^q_1\cup S^q_2$ is a reducible surface which is
clearly singular in $p\in C_1\cap C_2\subset S^q_1\cap S^q_2$.
It follows that $\tilde{T}_pX \subset H_q$ for all $q\in Q_2^0$. Thus
$\tilde{T}_pX\subset \bigcap_{q\in Q^3_5} H_q= \mathbf{P}$. As $\tilde{T}_pX$ is of
dimension 3 and
$\mathbf{P}$ is a plane, we get a contradiction excluding the last case and finishing the proof.
\end{proof}
\begin{lemm}\label{no fibred CY in quadrics}
 There are no smooth Calabi--Yau threefolds  contained in a quadric $Q_5^{r}$ of corank
$3\leq r\leq 4$ which admit a fibration by surfaces contained in linear spaces
contained in $Q^r_5$.
\end{lemm}
\begin{proof} Assume the contrary. Let $X$ be a smooth Calabi--Yau threefold
contained in $Q^r_5$ such that $X$ admits a fibration $f:X\to Y$ with fibers being
surfaces contained in
linear spaces contained in $Q^r_5$. As usual we use the notation for which $Q^r_5$ will
be a cone with center a linear space $\mathbf{P}$ and spanned over a smooth
quadric $Q_{5-r}^0$ of dimension $5-r$. As $X$ is a Calabi--Yau threefold,
we have $Y\simeq \mathbb{P}^1$ and the fiber $X_y$ for a generic point $y\in Y$
is either a
K3 or an abelian surface (smooth in both cases by Bertini theorem). Observe
moreover that the maximal dimensional linear spaces in $Q^r_5$ are projective spaces
of dimension 4. In both cases, we have only 1-dimensional
families of $\mathbb{P}^4$ contained in $Q^r_5$ and no two fibers of the fibration
can be contained in the same $\mathbb{P}^4$ as otherwise they would have to
intersect.
It follows that we can choose a family $ \{L_y\}_{y\in Y}$ such that
$L_y\simeq\mathbb{P}^4\subset Q^r_5$ and $L_y\cap L_y'=\mathbf{P}$ for any distinct
$y,y'\in Y$ such that
for a generic $y\in Y$ the fiber $X_y$ is either a K3 or an abelian surface
contained in $L_y$.

\begin{Claim} We claim that such a fiber $X_y$ must be one of the following:
\begin{itemize}
 \item a complete intersection of a quadric and a cubic in $L_y\simeq \mathbb{P}^4$
\item a complete intersection of a hyperplane and a quartic in
$L_y\simeq \mathbb{P}^4$
\item the zero locus of the Horrocks Mumford bundle on $L_y\simeq \mathbb{P}^4$
\end{itemize}
\end{Claim}
Indeed, fix a generic $y\in Y$. The fiber $X_y$ is then a smooth K3 or abelian
surface contained in $L_y=\mathbb{P}^4$.
The theorem of Severi, saying that the only surface in
$\mathbb{P}^4$ which is not linearly normal is the Veronese surface, implies
that $X_y$ is linearly normal. Let us denote its degree by $d_{X_y}$. The following
computation applies:

By the double point formula, $c_2(N_{X_y|\mathbb{P}^4})=[X_y]^2=d_{X_y}^2$.
On the other hand, by the exact sequence:
$$0\rightarrow T_{X_y}\rightarrow T_{\mathbb{P}^4}|_S\rightarrow
N_{X_y|\mathbb{P}^4}\rightarrow 0,$$
and the vanishing $c_1(T_{X_y})=0$, we have:
$$c_2(N_{X_y|\mathbb{P}^4})=c_2(T_{\mathbb{P}^4}|_{X_y})-c_2(T_{X_y})=\begin{cases}
                                                             10d_{X_y}-24 \quad
\text{when $X_y$ is a K3 surface} \\ 10d_{X_y} \quad \text{when $X_y$ is an abelian
surface}
                                                            \end{cases}
$$
Comparing the two equations, we have $d_{X_y}=4$ or $6$ and $X_y$ is a K3 surface or
$d_{X_y}=10$ and $X_y$ is an abelian surface. Finally, we use the classification
(cf. \cite[table 7.3]{DP}) of non-general type surfaces in $\mathbb{P}^4$ with $d_{X_y}=4,6,10$ to obtain our
claim.

From now on, we shall consider the cases of rank $r=3$ and $r=4$ separately.
For $r=3$, we have $\mathbf{P}$ is a 3-dimensional space and $Q^4_5$ is a cone
centered at $\mathbf{P}$ and spanned over a conic $Q^0_1$. A generic fiber $X_y$
meets $\mathbf{P}$ in a curve $C_y$ and all these curves must be disjoint.
It follows that $X\cap \mathbf{P}$ contains a surface $\mathbf{S}$ fibered by the
curves $C_y$. Observe, moreover, that since the fibers $X_y$ cover $X$ it follows
that  $\mathbf{S}=X\cap \mathbf{P}$ is of pure dimension 2. Now,
by the Zak tangencies theorem $\mathbf{S}$ is smooth in codimension 1 (since any
singular point of $\mathbf{S}$ is a tangency point of $\mathbf{P}$ with $X$) and
since it is a hypersurface it must be irreducible with isolated singularities.
Observe now that it follows that the hyperplane class $H$ on $X$ is linearly
equivalent to $\mathbf{S}+2 X_y$.
We shall compute the degree of $\mathbf{S}$ in terms of the degrees of the fiber
$X_y$ in two ways.
The first will be based on the adjunction formula on $\mathbf{S}$. Since $C_y$
is smooth we can compute its canonical class:
\begin{itemize}
\item On one hand $C_y$ is a hyperplane section of $X_y$ hence a canonical curve
of degree $d=4,6$ or $10$ hence $$\operatorname{deg}
(K_{C_y})=\operatorname{deg}(X_y). $$
\item On the other $C_y$ is a fiber of a fibration on the hypersurface
$\mathbf{S}\subset \mathbf{P}=\mathbb{P}^3$ hence $\operatorname{deg}
(K_{C_y})=\operatorname{deg}(\mathbf{S})-4$.
\end{itemize}

Next, by adjunction on $X$, we have
$\omega_\mathbf{S}=\mathbf{S}|_{\mathbf{S}}=(H-2 X_y)|_{\mathbf{S}}=H|_{\mathbf{
S}}-2 C_y.$
But, on the other hand, on $\mathbf{P}$ we have
$\omega_\mathbf{S}=(\operatorname{deg}(\mathbf{S})-4) H|_{\mathbf{S}}$.
It follows that
$\operatorname{deg}(C_y)=\frac{1}{2}(5-\operatorname{deg}(\mathbf{S}))\deg(\mathbf{S}))$ and, by
comparing with $\operatorname{deg}(C_y)=\operatorname{deg}
(K_{C_y})=\operatorname{deg}(\mathbf{S})-4$, we get a contradiction with
$\operatorname{deg}(\mathbf{S})$ being an integer. This concludes the proof for $r=3$.

Let us pass to the case $r=4$. We then have $\mathbf{P}$ is a plane and $Q^3_5$ is a
cone centered in $\mathbf{P}$ and spanned over a quadric surface $Q^0_2$. In this
case the generic fiber $X_y$ meets $\mathbf{P}$ in a finite set of points. It
follows that
$X\cap \mathbf{P}$ is a curve $\mathbf{C}$.  Consider for a generic $p\in Q_2^0$
the hyperplane $H_p$ spanned by $\mathbf{P}$ and the tangent $\tilde{T}_p Q^0_2$.
The intersection $H_p\cap Q_5^3$ decomposes into two 3-dimensional linear spaces
$L_1$ and $L_2$ such that $L_1$ contains a fiber $S$. It follows that $H_p\cap
X$ decomposes into a fiber $X_y$ of the fibration and a surface $T$ such that $T$
contains $\mathbf{C}$ and $T=X\cap L_2$.
Denote the blow up of $Q_5^3$ in $L_2$ by $\pi:\mathfrak{P}\to Q_5^3$ and the proper
transform of $X$ by the blow up by $\tilde{X}$. Since $X\cap L_2$ is a Cartier
divisor it follows that $\pi:\tilde{X}\to X$ is an isomorphism. Our aim is to prove that this is in fact impossible and in this way finish the proof. We achieve our aim 
by performing a computation on the resolution $\mathfrak{P}$ proving that $\tilde{X}$ must contain a fiber of the projection $\pi$. The computation is presented below.   

First, observe that $\mathfrak{P}\simeq
\mathbb{P}(2\mathcal{O}_{\mathbb{P}^1}(1)\oplus 3\mathcal{O}_{\mathbb{P}^1})$
admits a natural fibration $g\colon \mathfrak{P}\to \mathbb{P}^1$.
 The Chow group of $\mathfrak{P}$ is generated by the pullback $\xi=\pi^*(H)$ of the hyperplane section $H$ of $Q_5^3$  and the fiber
$h=g^*([pt])$ with the relation $\xi^5= 2 \xi^4\cdot h$ and $h^2=0$.
 It follows that the class of $X$ has a decomposition $[X]=d_{X_y} \xi^2+(d-2d_{X_y})
h\xi$.

\begin{Claim}
 We claim that one of the following holds:
 \begin{itemize}
\item $d_{X_y}=6$ and $[\tilde{X}]=6\xi^2+(d-12)h\xi$
\item $[\tilde{X}]=10 \xi^2-3h\xi$
\item $[\tilde{X}]=4 \xi^2+3h\xi$
\end{itemize} 
\end{Claim}

The claim follows from a computation with the double point formula analogous to
the proof of Theorem \ref{prop1}. Indeed, consider a section of $X$ by a generic
element from the system $\xi$. It is a surface of general type $G$ embedded in a
variety $\mathfrak{P}'\simeq \mathbb{P}(2\mathcal{O}_{\mathbb{P}^1}(1)\oplus
2\mathcal{O}_{\mathbb{P}^1})$ with generators of the Chow group
$\xi'=\xi|_{\mathfrak{P}'}$ and $h'=h|_{\mathfrak{P}'}$ such that $[G]=d_{X_y}
\xi'^2+(d-2d_{X_y}) h'\xi'$.
By the double point formula, we have $c_2(N_{G|\mathfrak{P}'})=[G]^2=2dd_{X_y}-2d_{X_y}^2$. On the
other hand, by the exact sequence
$$0\to T_G\to T_{\mathfrak{P}'}\to N_{G|\mathfrak{P}'}\to 0$$
and the Riemann--Roch theorem on $G$, we obtain another formula
$c_2(N_{G|\mathfrak{P}'})=2d_{X_y}+12d-84$. Comparing the two we prove the claim.

Before we go further, let us recall the following.
\begin{thm}[Serre's construction (for this version see {\cite[thm 1.1]{Arrondo}})]Let $X$ be a smooth variety of dimension
$\geq 3$ and $L$ a line bundle on $X$ such that
$h^2(L^{-1})=0$. Let $Y$ be a locally complete intersection subscheme of pure
codimension $2$. Then $Y\subset X$ is the zero locus of a section of a rank two
vector bundle $\mathcal{E}$ on $X$ if and only if $\omega_Y=(\omega_X\otimes L)|_Y$.
Moreover in such a case we have the
following exact sequence $$0\to \mathcal{O}_X \to \mathcal{E} \to
\mathcal{I}_{Y}\otimes
L\to 0.$$
\end{thm}

By the Serre's construction, $\tilde{X}$ is obtained as the zero locus of a
section of a rank 2 vector bundle $\mathcal{E}$ on $\mathfrak{P}$. We have
$c_1(\mathcal{E})=5\xi$ and $c_2( \mathcal{E})=[\tilde{X}]$.
Consider first the case $d_S=6$.
We see that the restriction of $\mathcal{E}$ to any fiber $F_t$ of $g$
decomposes as $\mathcal{O}_{F_t}(2)\oplus \mathcal{O}_{F_t}(3)$.
Indeed, the zero locus of the section defining $X$ restricted to $F_t$ is
Gorenstein of codimension 2 as a divisor on $X$, hence, is a complete
intersection.
This means that the restricted bundle decomposes over every fiber and this
decomposition is the same as for the generic fiber i.e.
$\mathcal{E}|_{F_t}=\mathcal{O}_{F_t}(2)\oplus \mathcal{O}_{F_t}(3)$.

It follows that
$g_* \mathcal{E}(-3\xi)$ is a line bundle and that $R^i g_*
\mathcal{E}(-3\xi)=0$ for
$i=1,2$.
This implies that $\chi( g_* \mathcal{E}(-3\xi))=\chi (\mathcal{E}(-3\xi))$. The
latter is computed by the Hirzebruch-Riemann-Roch theorem on $\mathfrak{P}$ to
be $13-d$.

Hence,
$g_* \mathcal{E}(-3\xi)=\mathcal{O}_{\mathbb{P}^1}(12-d)$. Now, by the projection
formula we have
$\mathcal{E}(-3\xi+(d-12)h)$ has a section. Since
$c_2(\mathcal{E}(-3\xi+(d-12)h))=0$ and $\mathcal{E}(-3\xi+(d-12)h)\otimes
\mathcal{O}(-D)$ has no section for non-trivial effective divisors $D$, the
section does not vanish.
It follows that we have the following exact sequence:
\begin{equation}\label{exact sequence r=4 on mathfrak P}
 0\to \mathcal{O}_{\mathfrak{P}}(3\xi-(d-12)h)\to \mathcal{E}\to
\mathcal{O}_{\mathfrak{P}}(2\xi+(d-12)h)
\end{equation}

Let us now restrict $\mathcal{E}$ to the exceptional locus $\Xi\simeq
\mathbb{P}^2\times \mathbb{P}^1$ of the blow up $\pi$ and denote $\pi''=\pi|_\Xi$
the projection onto $\mathbb{P}^2$. We observe that under the assumption that
$\tilde{X}$ contains no fiber of the projection $\pi$, we have
$\mathcal{E}'=\mathcal{E}|_E$ restricted to every fiber of the projection
$\pi''$ is a bundle of rank 2 with trivial first Chern class and either a
non-vanishing section or a section vanishing in one point i.e. it is either $\mathcal{O}_{\mathbb{P}^1}\oplus \mathcal{O}_{\mathbb{P}^1}$ or $\mathcal{O}_{\mathbb{P}^1}(-1)\oplus \mathcal{O}_{\mathbb{P}^1}(1)$. It follows that
$\pi''_*(\mathcal{E}')$ is a vector bundle of rank 2 and $R^i\pi''_*
\mathcal{E}'=0$. We compute $c_2(\pi''_*(\mathcal{E}')).$
By restricting the exact sequence \ref{exact sequence r=4 on mathfrak P} to $\Xi$,
we get
$$0\to \mathcal{O}_{E}(3\xi'-(d-12)h')\to \mathcal{E}'\to
\mathcal{O}_{E}(2\xi'+(d-12)h').$$
Taking the push-forward by $\pi''$, we have:
\begin{multline*}
   0\to \pi''_*(\mathcal{O}_{E}(3\xi'-(d-12)h'))\to \pi''_*\mathcal{E}'\to
\pi''_*\mathcal{O}_{E}(2\xi'+(d-12)h')\\
\to R^1\pi''_*(\mathcal{O}_{E}(3\xi'-(d-12)h'))\to R^1\pi''_*\mathcal{E}'\cdots
  \end{multline*}
We now observe that  $\pi''_*(\mathcal{O}_{E}(3\xi'-(d-12)h'))=0$ as
$\mathcal{O}_{E}(3\xi'-(d-12)h')$ restricted to any fiber of $\pi$ has no
nontrivial section.
Moreover, $R^1\pi''_*\mathcal{E}'=0$ as $\mathcal{E}'$ restricted to any fiber of $\pi$ is
either $2 \mathcal{O}_{\mathbb{P}^1}$ or $\mathcal{O}_{\mathbb{P}^1}(-1)\oplus
\mathcal{O}_{\mathbb{P}^1}(1)$.
Finally, by projection formula and base change
\[\pi''_*\mathcal{O}_{\Xi}(2\xi'+(d-12)h')=\mathcal{O}_{\mathbb{P}^2}(2)\otimes
H^0(\mathbb{P}^1,\mathcal{O}_{\mathbb{P}^1}(d-12))=(d-11)\mathcal{O}_{\mathbb{P}
^2}(2)\] and
\begin{multline*}R^1\pi''_*(\mathcal{O}_{\Xi}(3\xi'-(d-12)h'))=\mathcal{O}_{\mathbb{P}^2}
(3)\otimes
H^1(\mathbb{P}^1,\mathcal{O}_{\mathbb{P}^1}(12-d))=\\=\mathcal{O}_{\mathbb{P}^2}
(3)\otimes
H^0(\mathbb{P}^1,\mathcal{O}_{\mathbb{P}^1}(d-14))=(d-13)\mathcal{O}_{\mathbb{P}
^2}(3),
\end{multline*}
and we get
\[0\to\pi''_*\mathcal{E}' \to  (d-11)\mathcal{O}_{\mathbb{P}^2}(2) \to
(d-13)\mathcal{O}_{\mathbb{P}^2}(3)\to 0.\]
From the exact sequence, we compute the second Chern class of the rank 2 bundle
$\pi''_*\mathcal{E}'$:
\[c_2(\pi''_*\mathcal{E}')=\frac{1}{2}d^2-\frac{29}{2}d+108.\]
The latter is nonzero for $d\in \mathbb{Z}$.
This means that every section of  $\pi''_*(\mathcal{E}')$ vanishes in some point on
$\mathbb{P}^2$, hence, every
section of $\mathcal{E}'$ vanishes on some fiber of $\pi''$. Finally this
implies that $\tilde{X}$ contains a fiber of the blow up $\pi$ giving a contradiction which completes the proof in the case $[\tilde{X}]=6 \xi^2+(d-12)h\xi$.

The proof in the case $[\tilde{X}]=4 \xi^2+3\xi\cdot h$ is completely analogous
leading to $c_2(\pi''_*(\mathcal{E}'))=1$.
The case $[\tilde{X}]=10 \xi^2-3\xi\cdot h$ is excluded because it has negative
intersection with $[\Xi]\xi=(\xi^2-2\xi h)\xi=\xi^3-2\xi^2 h$.
\end{proof}\end{proof}

We have proved in Theorem \ref{Main theorem} that there are no smooth
Calabi--Yau threefolds of
degrees 11 and 15 contained in a quadric. For canonical surfaces of general type,
the same bounds apply only if we restrict the rank of the quadric to be at least
5.
In fact, the proof of Lemma \ref{no fibred CY in quadrics} suggests that one may construct
nodal Calabi--Yau threefolds contained in quadrics of rank 4.

Below, we recall examples of nodal Calabi--Yau threefolds and in
consequence also smooth canonical surfaces of general type of degrees 11 and 15
contained in quadrics of rank 4.

\begin{prop} \label{CY stopnia 11} Let $E_{11}=3\mathcal{O}_{\mathbb{P}^6}(1)\oplus2
\mathcal{O}_{\mathbb{P}^6}(-1)$ and let $\sigma_{11}\in
H^0((\bigwedge^2 E_{11})(1))$ be a general section.
Then $X_{11}=\operatorname{Pf}(\sigma_{11})$ is well defined and is a singular Calabi--Yau
threefold with singular locus consisting of one ordinary double point.
\end{prop}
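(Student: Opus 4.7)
The plan is to exploit the block structure of $\varphi$ forced by the negative summands of $E_{11}$. Since $\bigwedge^2 E_{11}(1)=3\mathcal{O}(3)\oplus 6\mathcal{O}(1)\oplus \mathcal{O}(-1)$ and the last summand has no global sections, every $\sigma_{11}$ is represented by a $5\times 5$ alternating matrix of the form
$$\varphi=\begin{pmatrix}A & B\\ -B^T & 0\end{pmatrix},$$
with $A$ a $3\times 3$ alternating matrix of cubic forms and $B$ a $3\times 2$ matrix of linear forms. A direct expansion of the five $4\times 4$ principal Pfaffians identifies three of them as the $2\times 2$ minors of $B$ (quadrics) and the other two as $\alpha\cdot B^{(1)}$ and $\alpha\cdot B^{(2)}$ (quartics), where $\alpha=(A_{23},-A_{13},A_{12})$ is the cubic $3$-vector Hodge-dual to $A$ and $B^{(1)},B^{(2)}$ are the columns of $B$.

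Next I would locate the singular point. For general $\sigma_{11}$, the six linear entries of $B$ span a $6$-dimensional subspace of $H^0(\mathcal{O}_{\mathbb{P}^6}(1))$, so $\{B=0\}$ is a single reduced point $p_0$, which lies in $X_{11}$ since the quadrics vanish to order $2$ and the quartics to order $1$ there. To verify that $p_0$ is an ordinary double point, I would blow up $\mathbb{P}^6$ at $p_0$, use the linear forms $dB_{ij}|_{p_0}$ as coordinates on the exceptional divisor $\mathbb{P}^5$, and compute the projective tangent cone: the quadrics cut out the Segre variety $\Sigma=\mathbb{P}^2\times\mathbb{P}^1\subset\mathbb{P}^5$ of rank-one matrices, while the linear parts of the two quartics impose that both columns of $[dB]$ lie in the hyperplane $\alpha(p_0)^\perp\subset\mathbb{C}^3$. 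For generic $\alpha(p_0)\neq 0$, the joint intersection with $\Sigma$ is $\mathbb{P}^1\times\mathbb{P}^1$, a smooth quadric surface in a $\mathbb{P}^3\subset\mathbb{P}^5$. Since the projective tangent cone of a threefold at a node is exactly a smooth quadric surface in $\mathbb{P}^3$, this shows $p_0$ is an ODP.

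Finally I would prove smoothness of $X_{11}$ away from $p_0$. The determinantal fourfold $Y=\{\rk B\leq 1\}$ is smooth on $Y^\circ:=Y\setminus\{p_0\}$. On $Y^\circ$, writing $B=u\otimes v^T$ with $u,v\neq 0$ reduces both quartic Pfaffians to multiples of the single cubic $\langle\alpha,u\rangle$; hence $X_{11}\cap Y^\circ$ is cut out of $Y^\circ$ by one section of a line bundle. By Bertini applied to the linear system obtained by varying $\alpha$, for generic $\sigma_{11}$ this section vanishes smoothly on $Y^\circ$, yielding smoothness of $X_{11}\setminus\{p_0\}$. Combined with the adjunction formula (\ref{pfaffian adjunction}) giving $\omega_{X_{11}}=\mathcal{O}_{X_{11}}$, this establishes the statement.

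The hardest step is the final Bertini argument: one must verify that the linear system on $Y^\circ$ generated by $\langle\alpha,u\rangle$ as $\alpha$ varies is base-point-free. This reduces to the observation that, at any $p\in Y^\circ$ with image direction $u(p)$, a cubic $3$-vector $\alpha$ can be chosen with $\langle\alpha(p),u(p)\rangle\neq 0$, which is clear by evaluation at $p$.
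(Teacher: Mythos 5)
Your proof is correct in substance but follows a genuinely different route from the paper. The paper does not analyse the equations directly: it observes that $X_{11}$ lies on a two-dimensional system of rank-$4$ quadrics (a cone over $\mathbb{P}^1\times\mathbb{P}^2$), identifies the two residual systems of Weil divisors (a $(1,4)$ complete intersection and a degree-$7$ determinantal surface), and then feeds the class $[\tilde{X}]=4\xi^2+3\xi\cdot h$ into the vector-bundle computation at the end of the proof of Lemma \ref{no fibred CY in quadrics}, obtaining $c_2(\pi''_*\mathcal{E}')=1$ and hence a nonempty singular locus; the precise statement that the general member has exactly one ordinary double point is then certified by a Macaulay2 example plus semicontinuity. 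You instead work with the explicit $5\times 5$ block matrix, locate the unique candidate singular point $p_0=\{B=0\}$ (the vertex of the determinantal cone $Y=\{\rk B\le 1\}$), compute the tangent cone there, and use Bertini on $Y\setminus\{p_0\}$, where the two quartic Pfaffians reduce to one section $\langle\alpha,u\rangle$ of a line bundle, to get smoothness elsewhere. Your route is more self-contained: it avoids both the machinery of Lemma \ref{no fibred CY in quadrics} and the computer verification, and it proves directly that the general $X_{11}$ has exactly one node, whereas the paper's method a priori only bounds the singular locus from below and delegates the exact count to Macaulay2 (which in turn ties the example into the broader structural story about Calabi--Yau threefolds on corank-$3$ quadrics). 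Two points in your sketch deserve a sentence each to be fully rigorous: first, the projectivized tangent cone is a priori only \emph{contained} in the locus cut by the leading forms of the five generators; equality follows because that locus ($\mathbb{P}^1\times\mathbb{P}^1$ in $\mathbb{P}^3$) is integral of the same dimension as the tangent cone, and then the node conclusion needs the standard step that embedding dimension $4$ plus a rank-$4$ quadric leading form forces a hypersurface germ equivalent to the quadric. Second, the Calabi--Yau conditions beyond $\omega_{X_{11}}=\mathcal{O}_{X_{11}}$ (expected codimension $3$ so that the Pfaffian structure and adjunction apply, Gorenstein property, $h^1(\mathcal{O}_{X_{11}})=0$, connectedness) should be noted as following from the Pfaffian resolution (\ref{pfaffian sequence}); your Bertini and dimension arguments already give the codimension statement, so this is routine bookkeeping rather than a gap.
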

\begin{proof}
 Since $E_{11}$ is a decomposable bundle the variety $X_{11}$ is
scheme theoretically defined by Pfaffians of a skew-symmetric matrix with
entries being general polynomials of degrees of the shape:

 \[\left(\begin{matrix}
 &3&3&1&1\\
 & &3&1&1\\
 & & &1&1\\
 & & & &-\\

 \end{matrix}\right)\]
where $-$ corresponds to the zero entry.
It is hence contained in a 2-dimensional system of quadrics defining a
cone over $\mathbb{P}^1\times \mathbb{P}^2$. This system of quadrics is generated by the $2\times 2$ minors of a $3\times 2$ matrix of linear forms. It follows that all quadrics in the system are of rank at most  4.
Let us choose a general quadric in the system and denote it $Q_5^3$. Then $Q_5^3$ is a cone with vertex a plane $\mathbf{P}$ and
spanned
over a smooth quadric surface $Q_3^0$. The quadric $Q_3^0$ has two fibrations giving two
 systems of Weil divisors on $X_{11}$. The generic element of one of them is a
complete intersection of a hyperplane and a quartic obtained as a Pfaffian of
the form

  \[\left(\begin{matrix}
 &3&3&1\\
 & &3&1\\
 & & &1\\
 
 \end{matrix}\right)\]

 the generic element of the other is a surface of degree 7 given by $2\times 2$
minors of a general matrix of the form
  \[\left(\begin{matrix}
 3&1&1\\
 3&1&1\\
  \end{matrix}\right).\]
It follows now from the end of proof of Lemma \ref{no fibred CY in quadrics}  that in the
notation of this proof we have
  $[\tilde{X}]=4 \xi^2+3\xi\cdot h$ leading to $c_2(\pi''_*(\mathcal{E}'))=1$. Hence $X_{11}$ contains a singular locus of
degree at least 1. An example with one node is given by a Macaulay2 (\cite{M2}) calculation.

\end{proof}

Let $\phi \colon 10 \mathcal{O}_{\mathbb{P}^6} \to 2
\mathcal{O}_{\mathbb{P}^6}(1)$ be a general map. Let
$E_{15}=\operatorname{ker}(\phi)\oplus \mathcal{O}(1)$.
Let $\sigma_{15}\in H^0((\bigwedge^2E_{15})(1))$ be a generic section.
\begin{prop} \label{Cy stopnia 15}
Under above assumptions the variety
$B_{15}=\operatorname{Pf}(\sigma_{15})$ is well defined and is a singular Calabi--Yau
threefold with singular locus consisting of three ordinary double points.

\end{prop}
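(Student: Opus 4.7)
My plan is to follow closely the strategy employed in the proof of Proposition \ref{CY stopnia 11}. First I would verify that $B_{15}$ is a Calabi--Yau threefold of degree $15$. The bundle $E_{15}$ has rank $8+1=9$, so $u=4$, and the defining sequence $0\to\ker(\phi)\to 10\mathcal{O}_{\mathbb{P}^6}\to 2\mathcal{O}_{\mathbb{P}^6}(1)\to 0$ gives $c_1(E_{15})=-1$. Hence $c_1(E_{15})+u=3$, so the adjunction formula \ref{pfaffian adjunction} yields $\omega_{B_{15}}=\mathcal{O}_{B_{15}}$, while the Pfaffian resolution \ref{pfaffian sequence} produces $h^1(\mathcal{O}_{B_{15}})=0$ and, by a direct Chern class computation, $\deg(B_{15})=15$. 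Well-definedness (codimension $3$ for generic $\sigma_{15}$) then follows from a dimension count on the space of sections of $\bigwedge^2 E_{15}(1)$.

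The second step is to exhibit rank-$4$ quadrics containing $B_{15}$, by analogy with $X_{11}$. The presence of $\ker(\phi)$ as a direct summand of $E_{15}$, together with the attached $2\times 10$ matrix of linear forms defining $\phi$, produces a linear system of rank-$\le 4$ quadrics containing $B_{15}$, generated by the $2\times 2$ minors of this matrix. Choosing a generic $Q^3_5$ from this system places us in the setting of Lemma \ref{no fibred CY in quadrics}: a cone with vertex a plane over a smooth quadric surface. The two rulings of the quadric surface induce two one-dimensional systems of Weil divisors on $B_{15}$, whose generic members are explicitly described as Pfaffian and minors loci of suitable submatrices extracted from the defining skew-symmetric form on $E_{15}$. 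Reading off the classes of these surfaces on the resolution $\mathfrak{P}\simeq \mathbb{P}(2\mathcal{O}_{\mathbb{P}^1}(1)\oplus 3\mathcal{O}_{\mathbb{P}^1})$ determines $[\tilde B_{15}]$ in $A^\ast(\mathfrak{P})$, and applying verbatim the Chern-class computation carried out at the end of Lemma \ref{no fibred CY in quadrics} yields $c_2(\pi''_\ast \mathcal{E}')=3$. Thus the singular locus of $B_{15}$ has degree at least $3$.

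The main obstacle will be the final step, namely proving that the singular locus consists of exactly three ordinary double points, with no further singularities and no worse singularity type. Following the pattern at the end of the proof of Proposition \ref{CY stopnia 11}, this is to be verified by an explicit Macaulay2 calculation on a concrete random representative $\sigma_{15}\in H^0(\bigwedge^2 E_{15}(1))$, checking that the singular scheme of the resulting $B_{15}$ consists of three reduced points at which the Hessian of a local defining equation has rank $4$. Semicontinuity of the singularity type then propagates this conclusion to the generic member of the family, completing the proof.
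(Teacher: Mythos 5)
Your overall architecture is the same as the paper's (which itself only says ``similarly as in the proof for $d=11$'', computes $c_2(\pi''_*\mathcal{E}')=3$ via the machinery of Lemma \ref{no fibred CY in quadrics}, and closes the gap between ``at least three'' and ``exactly three ordinary double points'' by a Macaulay2 computation on an explicit section). However, the step in which you produce the rank~$4$ quadrics is wrong, and it is precisely the step that lets the rank-$4$-quadric machinery start. You claim that the $2\times 2$ minors of the $2\times 10$ matrix of linear forms defining $\phi$ give a system of rank $\leq 4$ quadrics containing $B_{15}$. Those minors cut out the locus where $\phi$ has rank $\leq 1$, which for a general $\phi$ is empty (expected codimension $(2-1)(10-1)=9>6$), so their common zero locus cannot contain the threefold, and indeed there is no reason for any of these quadrics to vanish on $B_{15}$. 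The quadric actually containing $B_{15}$ arises differently: writing $\sigma_{15}=(\sigma_1,\sigma_2)$ with $\sigma_1\in H^0(\bigwedge^2\ker(\phi)(1))$ and $\sigma_2\in H^0(\ker(\phi)(2))$, the equations of $\operatorname{Pf}(\sigma_{15})$ are the components of $\sigma_{15}^{\wedge 4}\in H^0(\bigwedge^8 E_{15}(4))=H^0(\det\ker(\phi)(4))\oplus H^0(\ker(\phi)^*(3))$, and the first component is a single quadric, the Pfaffian of the $\ker(\phi)$-block (concretely, of a skew $10\times 10$ matrix $A$ of linear forms with $\phi A=0$, which is everywhere degenerate along the row space of $\phi$). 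One still has to argue that this quadric has corank $3$, i.e.\ rank $4$; this is what places $B_{15}$ in the setting of Lemma \ref{no fibred CY in quadrics}, and your proposal supplies no correct argument for it. Note also that, unlike the degree $11$ case, $E_{15}$ is not a sum of line bundles, so ``Pfaffians and minors of submatrices extracted from the skew form'' is not literally available; identifying the two rulings and hence showing $[\tilde{B}_{15}]=6\xi^2+3h\xi$ (the branch $d_{X_y}=6$ of the Claim in Lemma \ref{no fibred CY in quadrics}, which is what yields $c_2(\pi''_*\mathcal{E}')=\frac12 d^2-\frac{29}{2}d+108=3$ at $d=15$, rather than one of the other listed classes) needs a genuine argument that your sketch does not provide once the false description of the quadric system is removed.

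The remaining parts are fine in outline and agree with the paper: the numerical verification that $E_{15}$ has rank $9$, $u=4$, $c_1(E_{15})=-1$, so $c_1+u=3$ and formulas (\ref{pfaffian adjunction}), (\ref{pfaffian sequence}) give triviality of $\omega$, $h^1(\mathcal{O})=0$ and degree $15$ (though ``well defined'' deserves more than a bare dimension count, since $\bigwedge^2E_{15}(1)$ is not globally generated here --- the generic member is singular --- so the Bertini-type theorem of \cite{O} cannot be quoted and one must argue codimension $3$ directly); and the final step, an explicit Macaulay2 example with exactly three nodes combined with the lower bound $3$ and semicontinuity, is exactly how the paper concludes. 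So the gap to repair is the identification and rank computation of the quadric through $B_{15}$ and the resulting determination of $[\tilde{B}_{15}]$.
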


\begin{proof}
 Similarly as in the proof for $d=11$, we prove that
$c_2(\pi''_*(\mathcal{E}'))=3$ hence $B_{15}$ contains a singular locus of
degree at least 3. The bound 3 is reached by Macaulay2 (\cite{M2}) calculation.

\end{proof}

\begin{rem}
 We saw that both Calabi-Yau threefolds  $X_{11}$ and $B_{15}$  admit two birational smooth
resolution of Picard rank 2. The extremal rays of these resolutions consist of
the small contraction of lines and one K3 fibration and one elliptic fibration.
 This means that the two birational models give all Calabi--Yau birational
 models of these varieties.

 \end{rem}
\section{Classification of Calabi-Yau threefolds contained in five-dimensional
quadrics} \label{sec classification}
In this section, we classify all nondegenerate Calabi--Yau threefolds contained in
5-dimensional quadrics.
Let $X$ be such a Calabi--Yau threefold. We know from previous sections that
$12\leq \deg(X)\leq 14$. Moreover, from the Riemann Roch theorem,
we know that any nondegenerate Calabi--Yau threefold of degree at most $13$ in $\mathbb{P}^6$
is contained in a quadric hence our classification contains all Calabi--Yau
threefolds contained
in $\mathbb{P}^6$ of degree at most $13$.

\begin{cor}\label{cor 1} A Calabi--Yau threefold $X\subset\mathbb{P}^6$ has
degree $d\geq 12$. Moreover, the degree $12$ threefold is a complete
intersection of two quadrics
and a cubic and the degree $13$ threefold is given by the $4\times  4$ Pfaffians of a
$5\times 5$ skew symmetric matrix with linear entries except one row and one
column of quadrics.
\end{cor}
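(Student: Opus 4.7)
The plan is to combine a Riemann--Roch lower bound on the number of quadrics through $X$ with Theorem \ref{Main theorem} and the classification of quasi-Buchsbaum Calabi--Yau threefolds in Theorem \ref{Buchsbaum CY}. First I would establish $d\geq 12$. On a Calabi--Yau threefold Kodaira vanishing combined with Serre duality yields $h^i(\mathcal{O}_X(k))=0$ for $i=1,2$ and all $k\in\mathbb{Z}$. Applying Riemann--Roch, together with Proposition \ref{pro linearly normal} and the relation $H\cdot c_2(X)=84-2d$ (read off from $\chi(\mathcal{O}_X(1))=7$), gives $h^0(\mathcal{O}_X(2))=d+14$; comparing with $h^0(\mathcal{O}_{\mathbb{P}^6}(2))=28$ produces
\[
 h^0(\mathcal{I}_X(2))\geq 14-d.
\]
For $d\leq 13$ this forces $X$ to lie in a $5$-dimensional quadric, and Theorem \ref{Main theorem} then excludes $d=11$. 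Hence $d\geq 12$.

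For the structural assertions, the strategy is to prove that any nondegenerate Calabi--Yau threefold of degree $12$ or $13$ in $\mathbb{P}^6$ is arithmetically Cohen--Macaulay (hence in particular quasi-Buchsbaum) and to apply Theorem \ref{Buchsbaum CY}. Matching degrees against the list there, $d=12$ forces case $(1)(d)$ with $E=\mathcal{O}_{\mathbb{P}^6}\oplus\mathcal{O}_{\mathbb{P}^6}(1)^{\oplus 2}$: the section of $\bigwedge^2 E(1)$ splits into two quadrics and a cubic and $X$ is precisely their common zero locus. For $d=13$ only case $(1)(e)$ fits, with $E=4\mathcal{O}_{\mathbb{P}^6}\oplus \mathcal{O}_{\mathbb{P}^6}(1)$; the induced decomposition of $\bigwedge^2 E(1)$ gives a $5\times 5$ skew matrix whose entries are linear forms in the $4\times 4$ block coming from $4\mathcal{O}_{\mathbb{P}^6}$ and quadrics in the row and column coming from $\mathcal{O}_{\mathbb{P}^6}(1)$, exactly as claimed.

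The main obstacle is proving the arithmetically Cohen--Macaulay property, i.e.\ $H^1_*(\mathcal{I}_X)=0$, for $d=12,13$. For $d=12$ one can reason directly with the net of quadrics through $X$: if $h^0(\mathcal{I}_X(2))\geq 3$, three independent quadrics in $\mathbb{P}^6$ would cut out a subscheme of degree at most $8$, which is incompatible with $\deg X=12$; so the pencil is proper and produces a $4$-dimensional complete intersection $Y$ of degree $4$ on which $X$ is a Cartier divisor cut by a cubic, giving the $(2,2,3)$ complete intersection. For $d=13$ the analogous argument is more delicate: one needs a careful cohomological dimension count to rule out excess sections of $\mathcal{I}_X(k)$ in each degree $k\geq 2$ and thereby force the Hartshorne--Rao module to vanish, which is precisely what the remainder of Section \ref{sec classification} would address before invoking Theorem \ref{Buchsbaum CY}.
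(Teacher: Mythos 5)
Your first step (the Riemann--Roch count $h^0(\mathcal{I}_X(2))\geq 14-d$, hence $X$ lies in a quadric for $d\leq 13$, and then Theorem \ref{Main theorem}) is exactly the paper's argument for $d\geq 12$ and is fine. The structural assertions, however, are where the work lies, and there your sketch has genuine gaps. For $d=12$: the claim that three independent quadrics ``cut out a subscheme of degree at most $8$'' only bounds the degrees of the \emph{components} of the intersection, which need not be proper; $X$ could a priori sit inside a fourfold component of small degree (for instance a cubic scroll fourfold, which does lie on three independent quadrics), so this count neither bounds $h^0(\mathcal{I}_X(2))$ nor is it actually needed. The step you do need, and simply assert, is that $X$ is a Cartier divisor on $Y=Q_1\cap Q_2$ cut out by a cubic hypersurface. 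That requires knowing that every quadric through $X$ has corank $\leq 2$ with singular locus disjoint from $X$ (Proposition \ref{no CY in quadrics of rank 4} together with the argument in the proof of Proposition \ref{prop1}), so that a general such $Y$ is smooth along $X$, and then a Lefschetz-type argument showing $\Pic$ of $Y$ is generated by the hyperplane class; none of these inputs appear in your proposal, and without them ``Cartier, hence cut by a cubic'' does not follow.

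For $d=13$ the gap is larger: you reduce everything to proving that $X$ is aCM and then say this needs ``a careful cohomological dimension count,'' but that is precisely the hard part and no direct count of $h^0(\mathcal{I}_X(k))$ forces $H^1_*(\mathcal{I}_X)=0$. The paper's proof is geometric: since every quadric through $X$ has rank $\geq 5$ and (by the proof of Theorem \ref{tw kw}) its singular locus is contained in $X$, a general codimension $2$ linear section $C$ of $X$ lies on a smooth quadric threefold $Q^0_3$; Serre's construction then produces a rank $2$ bundle $\mathcal{E}$ on $Q^0_3$ whose twist $\mathcal{F}=\mathcal{E}(-3)$ has $c_1=-\zeta_1$, $c_2=\zeta_2$, and such a bundle is either stable, hence the spinor bundle and aCM, or unstable with a section vanishing along a line, in which case $h^1(\mathcal{F}(n))=0$ as well; from this one gets $h^1(\mathcal{I}_C(n))=0$ for all $n$ and propagates the vanishing up through $S$ to $X$, proving $X$ is aCM, after which Corollary \ref{Cor from Buchsbaum CY} (equivalently the degree count in Theorem \ref{Buchsbaum CY}) yields the $4\times 4$ Pfaffians of a $5\times 5$ skew matrix with one row and column of quadrics. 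Your final matching of degrees against the list in Theorem \ref{Buchsbaum CY} is correct, but as it stands the proposal supplies neither the smoothness-along-$X$/Lefschetz ingredients in degree $12$ nor any substitute for the Serre-construction/spinor-bundle argument in degree $13$.
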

\begin{proof} We deduce from the Riemann--Roch theorem that
$$\chi(\mathcal{O}_X(m))=\frac{1}{6}md(m^2-1)+7m. $$ Thus by Serre duality and
Kodaira vanishing for $d\leq 13$ we have
$h^0(\mathcal{O}_{X}(2))\leq \chi(\mathcal{O}_X(2))\leq 27 <
28=h^0(\mathcal{O}_{\mathbb{P}^6}(2))$. It follows that
$X$ has to be
contained in a quadric so the first part follows from Theorem \ref{Main
theorem}. In the case $\deg
X=12$ the threefold $X$ is contained in a pencil of quadrics. From Proposition
\ref{no CY in quadrics of rank 4}, all
the quadrics containing $X$ have rank $\geq 5$ and from the proof of Proposition
\ref{prop1} their singular locus is
always disjoint from $X$. In particular, a general quadric containing $X$ is
smooth (by the
Bertini theorem) and the intersection $\mathcal{Q}$ of two quadrics containing
$X$ is
smooth along $X$. We now apply the Lefschetz hyperplane theorem to deduce
that the Picard group of $\mathcal{Q}$ is generated by
the hyperplane section from $\mathbb{P}^6$. Since $X$ does not pass through the
singular locus of $\mathcal{Q}$, it is a Cartier divisor, hence, $X$ is cut out by a
hypersurface in $\mathbb{P}^6$ of degree three. It follows that $X$ is a
complete intersection of two quadrics and a cubic.

In the case $d=13$, we deduce from Theorem \ref{tw kw} that $X$ is not contained
in a smooth quadric. Moreover, from the proof of Theorem \ref{tw kw},
the singular locus of a quadric containing $X$ is contained in $X$.
We know also from Theorem \ref{Main theorem} that the ranks of the quadrics
containing $X$ are $\geq 5$.

Let $X\supset S\supset C$ be a general surface hyperplane section and a general
curve
linear section of $X$.
It follows that $C\subset Q_3^0$ where $Q_3^0$ is a smooth quadric in
$\mathbb{P}^4$.

Since $C\subset Q_3^0$ is subcanonical, by the Serre construction, it is the zero
locus of a section of a rank $2$ bundle
$\mathcal{E}$ on $Q_3^0$. If we now denote by $\zeta_1$ the hyperplane section and by
$\zeta_2$ the class of a line on $Q^0_3$,
we infer $c_1(\mathcal{E})=5\zeta_1$ and $c_2(\mathcal{E})=13 \zeta_2$. Thus for $\mathcal{F}=\mathcal{E}(-3)$ we compute
$c_1(\mathcal{F})=-\zeta_1$ and $c_2(\mathcal{F})=\zeta_2$.

\begin{Claim}
We claim that $\mathcal{F}$ is aCM i.e. $h^1(\mathcal{F}(n))=0$ for all
$n\in\mathbb{Z}$.
\end{Claim}

Indeed, from \cite[p.205]{AS} we know that the \textit{spinor bundle} is the only stable bundle
with these
invariants and the spinor bundle is arithmetically
Cohen--Macaulay. Hence, the claim is proved for stable $\mathcal{F}$.

Suppose now that $\mathcal{F}$ is not stable i.e. we have $h^0(\mathcal{F})\neq 0$.
Clearly, $\mathcal{F}$ cannot be decomposable since a generic section of $\mathcal{F}(3)=\mathcal{E}$ defines a
codimension 2 curve of odd degree. This means that a general section of $\mathcal{F}$
vanishes in codimension
$2$ along a curve of degree $c_2(\mathcal{F})=1$. Let us denote this line by $l$.
From the exact sequences $$ 0\to\mathcal{O}_{Q^0_3}(n)\to \mathcal{F}(n) \to
\mathcal{I}_{l|Q^0_3}(n-1)\to 0 $$
$$0\to
\mathcal{O}_{\mathbb{P}^4}(n-2)\to \mathcal{I}_l(n) \to
\mathcal{I}_{l|Q^0_3}(n)\to 0$$
 we deduce that
$h^1(\mathcal{F}(n))=h^1(\mathcal{I}_{l|Q^0_3}(n-1))=h^1(\mathcal{I}_l(n-1))$ for all $n\in
\mathbb{Z}$. Since the line $l$ is aCM, we
have $h^1(\mathcal{F}(n))=0$ for all $n\in \mathbb{Z}$. This proves the claim.

Applying the claim to the cohomology of the exact sequence
$$ 0\to\mathcal{O}_{Q^0_3}(n)\to \mathcal{F}(n+3) \to
\mathcal{I}_{C|Q^0_3}(n+5)\to 0, $$ it
follows that
 $h^1(\mathcal{I}_{C|Q^0_3})(n)=0$ for all $n\in\mathbb{Z}$. Now, from the exact
sequence $$0\to
\mathcal{O}_{\mathbb{P}^4}(n-2)\to \mathcal{I}_C(n) \to
\mathcal{I}_{C|Q^0_3}(n)\to 0,$$
 we infer that $h^1(\mathcal{I}_C(n))=0$ for all $n\in\mathbb{Z}$.

Consider now the following exact sequences:
\begin{equation}\label{exactXS}
 0\rightarrow \mathcal{I}_X(n-1)\rightarrow \mathcal{I}_X(n)\rightarrow
\mathcal{I}_S(n)\rightarrow 0
\end{equation}
\begin{equation}\label{exactSC}
0\rightarrow \mathcal{I}_S(n-1)\rightarrow \mathcal{I}_S(n)\rightarrow
\mathcal{I}_C(n)\rightarrow 0.
\end{equation}
We know that $h^2(\mathcal{I}_X(n))=0$ and, from Proposition \ref{pro linearly normal}, that
$h^1(\mathcal{I}_X(1))=0$. From the long exact sequences of cohomology
corresponding to \ref{exactXS}, we deduce that
$h^1(\mathcal{I}_S(1))=0$.
Since $h^1(\mathcal{I}_C(n))=0$ for all $n\in\mathbb{Z}$ it follows by induction
that the long exact sequence constructed from (\ref{exactSC}) implies $h^1(\mathcal{I}_S(n))=0$. Then, by the sequence (\ref{exactXS}),
we have
$h^1(\mathcal{I}_X(n))=0$, so $X\subset \mathbb{P}^6$ is aCM.
The assertion follows from  Corollary \ref{Cor from Buchsbaum CY}.
\end{proof}

To complete the classification of Calabi--Yau threefolds contained in quadrics, we
lack only the consideration of Calabi--Yau threefolds of degree $14$.
For this, let us analyze more precisely Calabi--Yau threefolds contained in smooth 5-dimensional quadrics.
Let $X\subset Q^0_5$ be a Calabi--Yau threefold. Since $X$ is subcanonical,
from the Serre construction we
obtain that $X$ is the zero locus of a section of a rank $2$ vector bundle $\mathcal{E}$
i.e.
\begin{equation}\label{eq a} 0\to \mathcal{O}_{Q^0_5}\to \mathcal{E}\to
\mathcal{I}_{X|Q^0_5}(c_1(\mathcal{E}))\to 0.\end{equation}
Since $K_X=0$ we obtain $c_1(\mathcal{E})=5\zeta_1$ where $\zeta_1\subset Q^0_5$
is the
hyperplane section of $Q_5^0$. Moreover, we have $c_2(\mathcal{E})=\frac{1}{2}\deg(X)
\zeta_2$, where $\zeta_2$ is a general codimension two linear section of $Q^0_5$.
Finally
$H^i(\mathcal{E}(k))=H^i(\mathcal{I}_{X|Q^0_5}(c_1(\mathcal{E})+(5+k))$.

It follows, by applying the Beilinson type spectral sequence as in \cite{S}, that the only arithmetically Cohen-Macaulay bundles
(i.e with vanishing intermediate cohomology) on
$Q^0_n$ with $5\geq n\geq3$ are direct sums of line bundles and spinor
bundles.
Moreover, in \cite[Thm.~1.7]{BHVV}, it is proved that arithmetically Buchsbaum rank $2$ vector
bundles on $Q^0_5$, that are not aCM, are twists of Cayley bundles.
Recall that a \textit{Cayley bundle} $\mathcal{C}$ is defined in \cite{Ot} by the exact sequences
$$0\to \mathcal{O}_{Q^0_5}\to \mathcal{S}\to G\to 0,$$
$$0\to \mathcal{O}_{Q^0_5}\to G(1)\to \mathcal{C}(1)\to 0,$$
where $G$ is obtained by choosing a general section of the spinor bundle $\mathcal{S}$.
It means that Calabi-Yau threefolds given as zero loci of sections of
arithmetically Buchsbaum bundles on $Q^0_5$ are either degree 12 complete
intersections or zero loci of twists of Cayley bundles.
We saw in Corollary \ref{cor 1} that in degree 12 the complete intersections
form a unique family of examples.
We prove a similar result in the case of degree 14.
Let us first construct two families of Calabi--Yau threefolds of degree 14.

\begin{prop} \label{przyklad stopnia 14 w gladkiej kwadryce}
The zero locus of a generic section of the homogenous bundle $\mathcal{C}(3)$ is a
Calabi--Yau threefold of degree $14$ contained in $Q^0_5$.

\end{prop}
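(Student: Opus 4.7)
The plan is to verify the three defining properties of a Calabi--Yau threefold contained in $Q^0_5$ separately, using the $G_2$-homogeneous structure of $\mathcal{C}$ and the Chern-class data of Ottaviani~\cite{Ot}: the Cayley bundle has rank $2$ with $c_1(\mathcal{C})=-h$ and $c_2(\mathcal{C})\cdot h^3 = 2$, where $h$ is the hyperplane class on $Q^0_5$ (so that $h^5=2$).

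First, to obtain a smooth threefold I would show that $\mathcal{C}(3)$ is globally generated by computing $H^0(\mathcal{C}(3))$ via Borel--Weil--Bott (or citing the cohomology computations in \cite{Ot}), noting that $\mathcal{C}(3)$ is an irreducible $G_2$-homogeneous bundle on $Q^0_5=G_2/P$ with a sufficiently positive twist. A Kleiman transversality argument, available because the $G_2$-action is transitive, then guarantees that a general section of $\mathcal{C}(3)$ vanishes in the expected codimension $2$ on a smooth locus; since the rank is $2$ and $\dim Q^0_5=5$, the zero locus $X$ is a smooth threefold.

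Next, I would verify $\omega_X=\mathcal{O}_X$ by adjunction: for a rank-$2$ bundle $\det\mathcal{C}(3) = \det\mathcal{C}\otimes\mathcal{O}(6)=\mathcal{O}(-1+6)=\mathcal{O}(5)$, which cancels $\omega_{Q^0_5}=\mathcal{O}(-5)$. For the degree I would intersect the top Chern class of $\mathcal{C}(3)$ with $h^3$ using the standard twist formula for a rank-$2$ bundle,
\[
 c_2(\mathcal{C}(3)) = c_2(\mathcal{C}) + 3 h \cdot c_1(\mathcal{C}) + 9 h^2,
\]
obtaining $\deg X = c_2(\mathcal{C})\cdot h^3 - 3 h^5 + 9 h^5 = 2+12 = 14$.

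Finally, I would deduce $h^1(\mathcal{O}_X)=0$ from the Koszul resolution of the regular section defining $X$. Using $\mathcal{C}^\vee\cong\mathcal{C}(1)$ (the rank-$2$ identity $E^\vee = E\otimes\det(E)^{-1}$ applied to $\mathcal{C}$), the resolution reads
\[
 0\to\mathcal{O}_{Q^0_5}(-5)\to\mathcal{C}(-2)\to\mathcal{I}_{X|Q^0_5}\to 0.
\]
Combined with $0\to\mathcal{I}_{X|Q^0_5}\to\mathcal{O}_{Q^0_5}\to\mathcal{O}_X\to 0$ and the vanishings $H^i(\mathcal{O}_{Q^0_5})=0$ for $i>0$, this identifies $H^1(\mathcal{O}_X)$ with a subquotient squeezed between $H^2(\mathcal{C}(-2))$ and $H^3(\mathcal{O}_{Q^0_5}(-5))$. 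The second vanishes by Serre duality on $Q^0_5$. The main point on which the argument genuinely depends is the vanishing $H^2(\mathcal{C}(-2))=0$, which I would extract from the Borel--Weil--Bott computation of cohomology of twisted Cayley bundles in \cite{Ot}, consistent with the paper's earlier remark that twists of $\mathcal{C}$ are arithmetically Buchsbaum. Everything else is formal adjunction, Bertini via $G_2$-homogeneity, and a Chern-class calculation.
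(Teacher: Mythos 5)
Your proposal is correct and follows essentially the same route as the paper: adjunction/Chern class computations giving $c_1(\mathcal{C}(3))=5$ (hence $\omega_X=\mathcal{O}_X$) and $\deg X=c_2(\mathcal{C}(3))\cdot h^3=14$, together with global generation of the twisted Cayley bundle from Ottaviani and a Bertini-type argument for smoothness. The only difference is that you make explicit the vanishing $h^1(\mathcal{O}_X)=0$ via the Koszul resolution and $H^2(\mathcal{C}(-2))=0$ (which indeed follows from Ottaviani's cohomology computations, or from the defining sequences of $\mathcal{C}$ and the aCM property of the spinor bundle), a step the paper leaves implicit.
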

\begin{proof}
We have $c_1(\mathcal{C}(k))=2k-1$ thus to obtain a Calabi--Yau threefold we put $k=3$.
Then $c_2(\mathcal{C}(3))=14$. We know from \cite[thm.~3.7]{Ot} that $\mathcal{C}(2)$ is globally generated. 
We deduce that the zero locus of a general section $\mathcal{C}(3)$ is a smooth
Calabi--Yau threefold of degree $14$.
\end{proof}

The following family of Calabi-Yau threefolds of degree 14 was introduced in
\cite{Be}.
\begin{prop}\label{przyklad stopnia 14 bertin}
 Let $E_{14}=\Omega^1_{\mathbb{P}^6}(1)\oplus \mathcal{O}_{\PP^6}(1)$. Then the degeneracy locus
$D_6(\phi)$ of a generic skew symmetric map $\phi \colon E_{14}^*(-1)\to E_{14}$ is a
Calabi-Yau threefold of degree 14.
 
\end{prop}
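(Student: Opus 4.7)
The plan is to apply the Pfaffian machinery developed in the introduction to the specific bundle $E_{14} = \Omega^1_{\mathbb{P}^6}(1) \oplus \mathcal{O}_{\mathbb{P}^6}(1)$, verifying in turn the trivial canonical class, the degree $14$, smoothness, and the Calabi--Yau cohomological condition. First, the bundle $E_{14}$ has rank $7 = 2u+1$ with $u = 3$, and $c_1(E_{14}) = c_1(\Omega^1_{\mathbb{P}^6}(1)) + 1 = -1 + 1 = 0$. Taking $t = 1$ in the Pfaffian sequence (\ref{pfaffian sequence}), the parameter $s = c_1(E_{14}) + ut = 3$, and the adjunction formula (\ref{pfaffian adjunction}) immediately yields $\omega_X = \mathcal{O}_X(t + 2s - n - 1) = \mathcal{O}_X(1 + 6 - 7) = \mathcal{O}_X$.

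Next I compute the degree from the Pfaffian resolution, which in our case reads
\begin{equation*}
0 \to \mathcal{O}_{\mathbb{P}^6}(-7) \to T_{\mathbb{P}^6}(-5) \oplus \mathcal{O}_{\mathbb{P}^6}(-5) \to \Omega^1_{\mathbb{P}^6}(-2) \oplus \mathcal{O}_{\mathbb{P}^6}(-2) \to \mathcal{I}_X \to 0.
\end{equation*}
Applying the Euler sequence to compute $\chi(\Omega^1_{\mathbb{P}^6}(k))$ and $\chi(T_{\mathbb{P}^6}(k))$, the contributions of the trivial summands cancel against the line-bundle summands of $E_{14}$, so that $\chi(E_{14}(m-3)) = 7\binom{m+3}{6}$ and $\chi(E_{14}^*(m-4)) = 7\binom{m+2}{6}$. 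This yields the compact formula
\begin{equation*}
\chi(\mathcal{O}_X(m)) = \binom{m+6}{6} - 7\binom{m+3}{6} + 7\binom{m+2}{6} - \binom{m-1}{6},
\end{equation*}
from which one reads off $\chi(\mathcal{O}_X(1)) = 7$ and $\chi(\mathcal{O}_X(2)) = 28$. Comparing with the Riemann--Roch expression $\chi(\mathcal{O}_X(m)) = \tfrac{d}{6}m^3 + \tfrac{H\cdot c_2(X)}{12}m$ for a Calabi--Yau threefold forces $d = 14$. The vanishing $h^1(\mathcal{O}_X) = 0$ is extracted in the same way by chasing the long exact sequences of cohomology attached to the resolution, using Bott vanishings on $\mathbb{P}^6$.

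The main obstacle is verifying that, for generic $\phi$, the locus $D_6(\phi)$ has the expected codimension $3$ and is smooth. I would proceed as follows. The bundle $\bigwedge^2 E_{14}(1) = \Omega^2_{\mathbb{P}^6}(3) \oplus \Omega^1_{\mathbb{P}^6}(3)$ is globally generated on $\mathbb{P}^6$, which supplies enough sections for a Bertini-type argument for skew-symmetric degeneracy loci to apply, producing the correct codimension. For smoothness, the cleanest route is to exhibit one smooth member of the family and then invoke openness of smoothness on the parameter space $H^0(\bigwedge^2 E_{14}(1))$: the zero loci of $\mathcal{C}(3)$ on $Q^0_5$ from Proposition \ref{przyklad stopnia 14 w gladkiej kwadryce} provide such members once one checks that $\mathfrak{C}_{14} \subset \mathfrak{B}_{14}$, which is the content of Corollary \ref{stopnia 14 w kwadryce} appearing later in the paper.
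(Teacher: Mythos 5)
Your computations of the canonical class (via the adjunction formula (\ref{pfaffian adjunction}) with $u=3$, $t=1$, $s=3$), of the degree via the Pfaffian resolution and Riemann--Roch (your values $\chi(\mathcal{O}_X(1))=7$, $\chi(\mathcal{O}_X(2))=28$ do force $d=14$), and of $h^1(\mathcal{O}_X)=0$ by chasing the resolution, all agree with the paper's (terser) argument. Where you genuinely diverge is the smoothness step. The paper invokes the Bertini-type theorem for Pfaffian subvarieties of Okonek, for which global generation of $\bigwedge^2E_{14}(1)=\Omega^2_{\mathbb{P}^6}(3)\oplus\Omega^1_{\mathbb{P}^6}(3)$ (proved there by wedging the Euler sequence) yields at once the expected codimension \emph{and} smoothness of the generic Pfaffian locus: the singular locus of $D_6(\phi)$ sits inside the rank $\leq 2$ locus, whose expected codimension is $\binom{5}{2}=10>6$, so it is empty for generic $\phi$. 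You instead use global generation only for the codimension and obtain smoothness by exhibiting one smooth member --- a zero locus of $\mathcal{C}(3)$ --- and invoking openness of smoothness, via Corollary \ref{stopnia 14 w kwadryce}. This is not circular (that corollary rests on Theorem \ref{Buchsbaum CY} and the quadric results, none of which use the present proposition), but it makes an elementary example-construction depend on the heavy classification machinery of Sections 3--5, and it is also unnecessary, since the very Bertini-type statement you cite for the codimension already delivers smoothness for free. If you do keep your route, you should be explicit about two points you gloss over: the flatness of the family over the open locus of sections with expected codimension (constant Hilbert polynomial from the resolution), which is what lets smoothness of one fiber propagate to the generic one, and an actual proof that $\Omega^2_{\mathbb{P}^6}(3)\oplus\Omega^1_{\mathbb{P}^6}(3)$ is globally generated rather than a bare assertion.
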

\begin{proof} Let $B_{14}$ be a general variety obtained by this construction.
By the Bertini type theorem for Pfaffian subvarieties given in \cite[\textsection 3]{O} to
prove that $B_{14}$ is a smooth threefold it is
enough to prove that the bundle $\bigwedge^2 E_{14}(1)$ is globally generated.
Now, $\bigwedge^2 E_{14}(1)= \Omega_{\PP^6}^2(3)\oplus\Omega_{\PP^6}^1(3)$.
The Euler sequence gives us:
$$0\rightarrow \Omega_{\PP^6}^1(1)\rightarrow 7\mathcal{O}_{\PP^6}\rightarrow
\mathcal{O}_{\PP^6}(1)\rightarrow 0.$$
Taking its second and third wedge powers we have:
$$0\rightarrow \Omega_{\PP^6}^2(2)\rightarrow {\binom{7}{2}}
\mathcal{O}_{\PP^6}\rightarrow \Omega^1(2)\rightarrow 0$$
and
$$0\rightarrow \Omega_{\PP^6}^3(3)\rightarrow {\binom{7}{3}} \mathcal{O}_{\PP^6}\rightarrow
\Omega_{\PP^6}^2(3)\rightarrow 0.$$
It follows that $\Omega_{\PP^6}^1(2)$ and $\Omega_{\PP^6}^2(3)$ are globally generated, hence,
$\Omega_{\PP^6}^1(3)$ and $\Omega_{\PP^6}^2(3)\oplus \Omega_{\PP^6}^1(3)$ are also globally generated.
It follows that $B_{14}$ is a Pfaffian variety associated to the
bundle $E_{14}=\Omega^1_{\mathbb{P}^6}(1)\oplus \mathcal{O}_{\PP^6}(1)$, with $t=1$.
The vanishing of the dualizing sheaf is then given by the adjunction formula
(\ref{pfaffian adjunction}) for Pfaffian varieties and the vanishing of the
cohomology by the Pfaffian sequence \ref{pfaffian sequence}.
\end{proof}

 We shall denote by $\mathfrak{C}_{14}$ and $\mathfrak{B}_{14}$ the families of Calabi--Yau threefolds obtained 
 in Proposition \ref{przyklad stopnia 14 w gladkiej kwadryce} and Proposition \ref{przyklad stopnia 14 bertin} respectively.
Observe that a generic $B_{14}\in \mathfrak{B}_{14}$ is contained in a quadric.
Indeed, among the equations defining $B_{14}$ we have a section of the bundle
$(\bigwedge^6(\Omega^1_{\mathbb{P}^6}(1)))(3)=\mathcal{O}_{\mathbb{P}^6}(2)$
corresponding to the Pfaffian of a map $(\Omega^1_{\mathbb{P}^6}(1))^*(-1)\to \Omega^1_{\mathbb{P}^6}(1)$.

We can now classify all Calabi--Yau threefolds of degree 14 contained in a 5-dimensional quadric.

\begin{cor} \label{stopnia 14 w kwadryce}
If $X$ is a Calabi--Yau threefold of degree 14 contained in a smooth quadric $Q^0_5$ then $X$ is obtained as
the zero locus of a section of the twisted Cayley bundle $\mathcal{C}(3)$.
Moreover, if $X$ is a Calabi--Yau threefold contained in any quadric it is given
by the Pfaffian construction applied to
$E_{14}=\Omega^1_{\mathbb{P}^6}(1)\oplus \mathcal{O}_{\PP^6}(1).$
\end{cor}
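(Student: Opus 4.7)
The plan is to combine the classification from Theorem \ref{Buchsbaum CY} with a Serre-construction argument on $Q_5^0$ to prove both statements.

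I will address the second statement first. Let $X \subset \mathbb{P}^6$ be a Calabi--Yau threefold of degree $14$ contained in some quadric. The crucial input is that every Calabi--Yau threefold in $\mathbb{P}^6$ of degree $d\le 14$ is quasi-Buchsbaum, which is to be established in Section \ref{sec class degree 14}. Granted this, Theorem \ref{Buchsbaum CY} leaves exactly two possibilities in degree $14$: case $(1)(f)$ with $E = 7\mathcal{O}_{\mathbb{P}^6}$, and case $(2)(a)$ with $E = E_{14} = \Omega^1_{\mathbb{P}^6}(1)\oplus \mathcal{O}_{\mathbb{P}^6}(1)$. To exclude case $(1)(f)$ for $X$ contained in a quadric, I will chase the Pfaffian sequence \eqref{pfaffian sequence} with $s = 3$, $t = 1$, splitting it into two short exact sequences and applying Bott vanishing on $\mathbb{P}^6$ to deduce $H^0(\mathcal{I}_X(2)) = 0$. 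Hence case $(1)(f)$ produces no threefold inside any quadric, so $X$ must arise from case $(2)(a)$, proving the moreover statement.

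For the first statement, assume $X \subset Q_5^0$ with $Q_5^0$ smooth. Since $\omega_{Q_5^0} = \mathcal{O}_{Q_5^0}(-5)$ and $\omega_X = \mathcal{O}_X$, Serre's construction yields a rank $2$ vector bundle $\mathcal{E}$ on $Q_5^0$ with $c_1(\mathcal{E}) = 5\zeta_1$ and $c_2(\mathcal{E}) = 7\zeta_2$, fitting in
\[0 \to \mathcal{O}_{Q_5^0} \to \mathcal{E} \to \mathcal{I}_{X|Q_5^0}(5) \to 0.\]
I then invoke the dichotomy recalled just before the corollary: rank $2$ aCM bundles on $Q_5^0$ are sums of line bundles (since spinors have rank $4$), while by \cite[Thm.~1.7]{BHVV} rank $2$ aB bundles that are not aCM are twists of the Cayley bundle $\mathcal{C}$. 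A Chern-class computation rules out the aCM case: $\mathcal{O}(a)\oplus \mathcal{O}(b)$ would require $a+b = 5$ and $2ab = 7$, which has no integer solution. Matching $c_1(\mathcal{C}(k)) = (2k-1)\zeta_1$ to $5\zeta_1$ then forces $k = 3$, so that $\mathcal{E} \cong \mathcal{C}(3)$.

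The remaining step is to verify that $\mathcal{E}$ is arithmetically Buchsbaum. Via the Serre sequence above and the vanishing of intermediate cohomology of line bundles on $Q_5^0$, this reduces to showing that $H^1_*(\mathcal{I}_{X|Q_5^0})$ is annihilated by the maximal ideal of the homogeneous coordinate ring of $Q_5^0$. I will bootstrap this from the quasi-Buchsbaum property of $X$ in $\mathbb{P}^6$ by passing through the restriction sequence
\[0 \to \mathcal{O}_{\mathbb{P}^6}(-2) \to \mathcal{I}_X \to i_*\mathcal{I}_{X|Q_5^0} \to 0,\]
where $i\colon Q_5^0 \hookrightarrow \mathbb{P}^6$ is the inclusion. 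The main obstacle throughout is the quasi-Buchsbaum property for degree $14$ Calabi--Yau threefolds in $\mathbb{P}^6$; once it is granted, everything else reduces to bookkeeping with Pfaffian resolutions, restriction sequences, and Chern-class matching.
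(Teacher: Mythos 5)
There is a genuine gap: the entire proposal rests on the assertion that a degree-$14$ Calabi--Yau threefold contained in a quadric is quasi-Buchsbaum, which you explicitly defer to Section \ref{sec class degree 14} and never prove. In the paper that section only treats degree-$14$ threefolds \emph{not} contained in any quadric (showing they are aCM), and for the quadric case it refers back to this very corollary; so your deferral is either circular or points to a result that does not exist independently of the statement being proved. The quasi-Buchsbaum property in the quadric case is in fact the heart of the paper's proof of the ``moreover'' part: one passes to a general codimension-$2$ linear section $C\subset Q_3^0\subset\mathbb{P}^4$ (possible because all quadrics containing $X$ have corank $\le 2$), applies the Serre construction on $Q_3^0$ to get a rank-$2$ bundle with $c_1=-\zeta_1$, $c_2=\zeta_2$, and computes via a degenerate zero locus (two skew lines) that $h^1(\mathcal{I}_X(k))$ vanishes for $k\neq 2$ and equals $1$ for $k=2$; only then does Theorem \ref{Buchsbaum CY} apply, and the nonvanishing of $HR(X)$ is what forces case $(2)(a)$ rather than $(1)(f)$. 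Without this computation your application of Theorem \ref{Buchsbaum CY} is unsupported. (Your cohomology chase showing that the $E=7\mathcal{O}_{\mathbb{P}^6}$ Pfaffians lie in no quadric is correct, and the Chern-class exclusion of split bundles is fine up to the harmless slip $2ab=7$ versus $ab=7$.)

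For the first statement your route also differs from the paper's and inherits the same gap. You want to invoke the dichotomy of \cite[Thm.~1.7]{BHVV}, which requires first establishing that the Serre bundle $\mathcal{E}$ on $Q_5^0$ is arithmetically Buchsbaum; your bootstrap of this from the quasi-Buchsbaum property of $X$ again uses the missing ingredient, and moreover one should check that the notion of Buchsbaum bundle required in \cite{BHVV} (which involves linear restrictions) really follows from the triviality of the module structure of $H^1_*(\mathcal{I}_X)$ alone. The paper avoids all of this: it proves $h^0(\mathcal{I}_{X|Q_5^0}(2))=0$ directly (a destabilizing section would give a degree-$2$ threefold in $Q_5^0$, necessarily a $3$-dimensional quadric since $Q_5^0$ contains no $\mathbb{P}^3$, forcing $\mathcal{E}$ and hence $X$ to be aCM, contradicting containment in a quadric), concludes that $\mathcal{E}(-3)$ is stable, and then quotes Ottaviani's classification of stable rank-$2$ bundles on $Q_5^0$ with $c_1=-1$, $c_2=1$ as Cayley bundles. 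Either route could work, but as written yours omits the one argument that cannot be waved through.
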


\begin{proof}
Let $X$ be a Calabi--Yau threefold of degree 14 contained in the smooth
quadric $Q^0_5$.
By the Serre construction, we know that $X$ is given as the zero locus of a
section of a rank 2 vector bundle $\E$ such that
$c_1(\E(-3))=-\zeta_1$ and $c_2(\E(-3))=\zeta_2$.
It follows from the main theorem in \cite{Ot} that $\E(-3)$
is a Cayley bundle if we assume it is stable.
To prove that $\E(-3)$ is stable is the same as to prove that
$h^0(\E(-3))=h^0(\mathcal{I}_{X|Q^0_5}(2))=0$ (see (\ref{eq a})).
To prove that $X$ is not contained in a second quadric (i.e. that $h^0(\mathcal{I}_{X|Q^0_5}(2))=0$), we argue as in the proof
of Corollary \ref{cor 1}.
More precisely, if $\E(-3)$ is not stable then its general section defines a degree 2
threefold $W$ contained in $Q^0_5$. Since $Q^0_5$ contains no 3-dimensional linear
space, $W$ must be a 3-dimensional quadric.
Hence, $W$ is aCM implying $\E$ is aCM and in consequence $X$ is aCM. Thus $X$ is
not contained in any quadric and this gives a contradiction.

To prove the second part, we first show that $h^1(\mathcal{I}_X(k))=0$ for
$k\neq 2$ and $h^1(\mathcal{I}_X(2))=1$.
From the long cohomology exact sequence obtained from \ref{exactXS} and
\ref{exactSC}, we infer $h^1(\mathcal{I}_X(n))=h^1(\mathcal{I}_Z(n))$ where
$C\subset \mathbb{P}^4$ is a codimension $2$ linear section of $X\subset
\mathbb{P}^6$.
But we know that $X$ is contained in a quadric of rank $\geq 5$ thus $C$ is
contained in a smooth quadric. It follows that $C$ is given as the zero locus of a section of a rank $2$
vector bundle $\mathcal{B}(3)$ such that $c_1(\mathcal{B})=-\zeta_1$ and $c_2(\mathcal{B})=\zeta_2$. It
follows that the zero locus of a general section of $\mathcal{B}$
is a sum $t$ of two skew lines. We find that the
cohomologies of $h^1(\mathcal{I}_{t|\mathbb{P}^3}(k))$ are $0$ for $k\neq 2$ and
$1$ for $k=2$. Since all nonzero elements of the Hartshorne--Rao module of $X$ are of the same weight, it follows that $X$ is quasi-Buchsbaum. 
Hence, we deduce from Theorem \ref{Buchsbaum CY} that $X=\Pf (\sigma)$ for some $\sigma \in H^0(\bigwedge^2 E_{14}(1))$ where 
$E_{14}=\Omega_{\PP^6}^1(1)\oplus \mathcal{O}_{\mathbb{P}^6}(1)$.
We thus get the assertion.
\end{proof}

As a direct consequence of the above, we have the following.
\begin{cor}\label{C14 contained in B14}
The family $\mathfrak{C}_{14}$ is contained in $\mathfrak{B}_{14}$ as a dense
subset.
\end{cor}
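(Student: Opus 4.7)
The plan is to verify containment using Corollary \ref{stopnia 14 w kwadryce} and then density by means of the explicit description of the quadric containing the generic Pfaffian.

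For containment, every $X\in \mathfrak{C}_{14}$ is a Calabi--Yau threefold of degree $14$ contained in a smooth quadric $Q^0_5\subset \PP^6$; in particular, it is contained in a quadric, so the second part of Corollary \ref{stopnia 14 w kwadryce} implies that $X=\Pf(\sigma)$ for some $\sigma\in H^0(\bigwedge^2 E_{14}(1))$, hence $X\in \mathfrak{B}_{14}$.

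For density, I would parametrise $\mathfrak{B}_{14}$ by the open subset $U\subset \PP(H^0(\bigwedge^2 E_{14}(1)))$ of sections whose Pfaffian locus is a smooth Calabi--Yau threefold; this is irreducible because $H^0(\bigwedge^2 E_{14}(1))$ is a vector space. Using the splitting
\[
\textstyle\bigwedge^2 E_{14}(1)=\Omega^2_{\PP^6}(3)\oplus \Omega^1_{\PP^6}(3),
\]
the section $\sigma$ has a component $\sigma_1\in H^0(\Omega^2_{\PP^6}(3))$ corresponding to the skew-symmetric $6\times 6$ block $\varphi|_{\Omega^1_{\PP^6}(1)}\colon (\Omega^1_{\PP^6}(1))^*(-1)\to \Omega^1_{\PP^6}(1)$. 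As noted in the paragraph preceding Corollary \ref{stopnia 14 w kwadryce}, the Pfaffian of this block lies in $H^0(\det(\Omega^1_{\PP^6}(1))\otimes \mathcal{O}_{\PP^6}(3))=H^0(\mathcal{O}_{\PP^6}(2))$ and cuts out a quadric $Q_\sigma\supset \Pf(\sigma)$.

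The main point is then to show that for a generic $\sigma\in U$ the quadric $Q_\sigma$ is smooth, for then $\Pf(\sigma)\subset Q^0_5$ belongs to $\mathfrak{C}_{14}$, and since smoothness of $Q_\sigma$ is an open condition on $\sigma$, the locus of Pfaffians in smooth quadrics is open and nonempty in $U$, hence dense. Smoothness of the generic $Q_\sigma$ can be verified either by exhibiting a single $\sigma$ for which $Q_\sigma$ is nondegenerate (e.g.\ the one underlying the sections of $\mathcal{C}(3)$ on $Q^0_5$ produced in Proposition \ref{przyklad stopnia 14 w gladkiej kwadryce}, which by construction is contained in a smooth quadric and so yields a $\sigma$ with $Q_\sigma$ smooth), or by a direct Bertini-type argument using that the map $\sigma\mapsto \Pf(\varphi|_{\Omega^1_{\PP^6}(1)})$ from $H^0(\Omega^2_{\PP^6}(3))$ to $H^0(\mathcal{O}_{\PP^6}(2))$ has image meeting the smooth quadrics. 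The first route seems the cleanest: the containment already proved guarantees that at least one point of $U$ lies in the smooth-quadric locus, and together with openness this forces density.
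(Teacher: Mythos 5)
Your argument is correct and amounts to the paper's own route: the paper states the corollary as a direct consequence of Corollary \ref{stopnia 14 w kwadryce}, exactly as you use it — containment from its second part, density from its first part combined with openness of ``the associated quadric is smooth'' and irreducibility of the space of sections. The only step worth making explicit is that for a member of $\mathfrak{C}_{14}$ the Pfaffian quadric $Q_\sigma$ really is the smooth quadric $Q^0_5$; this holds because such an $X$ lies on no second quadric ($h^0(\mathcal{I}_{X|Q^0_5}(2))=0$), which is established in the proof of Corollary \ref{stopnia 14 w kwadryce}.
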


\section{Classification of degree 14 Calabi-Yau threefolds in $\mathbb{P}^6$}\label{sec class degree 14}
The aim of this section is the classification of all degree $14$ Calabi--Yau
threefolds in $\mathbb{P}^6$.

From the Riemann-Roch theorem, we deduce that if $X\subset\mathbb{P}^6$ is a
Calabi-Yau threefold of degree $14$ then
$h^1(\mathcal{I}_X(2))=h^0(\mathcal{I}_X(2))$ and
$h^1(\mathcal{I}_X(3))+7=h^0(\mathcal{I}_X(3))$ (recall that
$h^0(\mathcal{I}_X(1))= 0$).
We have two possibilities:
\begin{itemize}
 \item $X$ is contained in a quadric,
 \item $X$ is not contained in any quadric.
\end{itemize}
The first case is solved by Corollary \ref{stopnia 14 w kwadryce}.
Assume, hence, that $X$ is not contained in any quadric. Then there is an at least
$7$-dimensional space of cubics vanishing along $X$. We, moreover, claim the following.
\begin{prop}\label{l1} The singular locus of a generic cubic from
$H^0(\mathcal{I}_X(3))$ has dimension $\leq 1$.
\end{prop}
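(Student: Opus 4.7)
The plan is to combine a Bertini-type argument with an analysis via the conormal sequence. Consider on $\PP^6$ the exact sequence
$$0\to \mathcal{I}_X^2(3)\to \mathcal{I}_X(3)\to N^*_{X/\PP^6}(3)\to 0,$$
in which $N^*_{X/\PP^6}(3)$ is a rank-three vector bundle on the smooth threefold $X$ with determinant $\mathcal{O}_X(2)$ (by the Calabi--Yau adjunction). A cubic $f\in H^0(\mathcal{I}_X(3))$ induces a section $\bar f$ of $N^*_{X/\PP^6}(3)$, and the zero locus of $\bar f$ on $X$ is precisely $\sing(V(f))\cap X$. I would decompose $\sing(V(f))=\bigl(\sing(V(f))\cap X\bigr)\cup\bigl(\sing(V(f))\setminus X\bigr)$ and treat the two pieces separately.

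For the piece outside $X$, I would invoke Bertini for the linear system $|H^0(\mathcal{I}_X(3))|$, whose projective dimension is at least $6$: a generic member is smooth away from the base locus $B$ of the system. I would then argue that the reduced base locus equals $X$, using that $X$ is non-degenerate of degree $14$ with $h^0(\mathcal{I}_X(2))=0$, so $I(X)$ begins in degree $3$ and the $\geq 7$ cubics through $X$ already cut $X$ set-theoretically; indeed, any residual irreducible component $V\subset B$ with $V\not\subset X$ would force $h^0(\mathcal{I}_X(3))=h^0(\mathcal{I}_{X\cup V}(3))$, which can be excluded numerically. For the piece along $X$, the locus $\sing(V(f))\cap X$ is the zero scheme of $\bar f$, a section of the rank-three bundle $N^*_{X/\PP^6}(3)$. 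If $r$ denotes the generic rank of the evaluation map
$$\mathrm{ev}\colon H^0(\mathcal{I}_X(3))\otimes \mathcal{O}_X\to N^*_{X/\PP^6}(3),$$
then a generic $\bar f$ vanishes on a subscheme of pure codimension $r$ in $X$, hence of dimension $3-r$. The desired conclusion $\dim\sing(V(f))\leq 1$ therefore reduces to the inequality $r\geq 2$.

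The main obstacle and technical heart of the proof is precisely this rank estimate. I would rule out $r=0$ (which would force every cubic to be singular along all of $X$, i.e.\ $H^0(\mathcal{I}_X(3))\subset H^0(\mathcal{I}_X^2(3))$) by bounding $h^0(\mathcal{I}_X^2(3))$ via the filtration $\mathcal{I}_X^2/\mathcal{I}_X^3\simeq S^2 N^*_{X/\PP^6}$ and the resulting cohomological comparison with $H^0(\mathcal{I}_X(3))$. The case $r=1$ is the most delicate: the $\geq 7$-dimensional system of cubics would factor through a rank-one subsheaf $L\subset N^*_{X/\PP^6}(3)$ admitting $h^0(L)\geq 7$; combined with $\det N^*_{X/\PP^6}(3)=\mathcal{O}_X(2)$ and the resulting constraints on the rank-two quotient, this forces positivity data on $X$ incompatible with the hypothesis that $X$ lies in no quadric. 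Turning these observations into a clean numerical contradiction is where I expect the bulk of the technical work to lie.
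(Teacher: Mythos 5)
Your identification of $\sing(V(f))\cap X$ with the zero scheme of the induced section $\bar f$ of $N^*_{X/\PP^6}(3)$ is correct, but the reduction of the whole problem to the inequality $r\geq 2$ is not. For a linear subsystem $W\subset H^0(\mathcal{I}_X(3))$ the zero locus of a general member on $X$ has dimension governed by all the rank strata of the evaluation map, roughly $\max_{r'}\bigl(\dim X_{r'}-r'\bigr)$ where $X_{r'}$ is the locus where $\mathrm{ev}$ has rank $r'$, not by the generic rank alone; in particular every point of $X_0$ (the common singular locus of all cubics through $X$) lies in the singular locus of \emph{every} member, so even $r=3$ gives nothing unless you also exclude a surface inside $X_0$ (and a surface where the rank drops to $1$). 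This is exactly the delicate point: the paper rules out a surface $U$ in the common singular locus by noting that every cubic through $X$ would then contain the secant variety $\mathfrak{s}_2(U)$, which (after controlling the base locus) forces $\mathfrak{s}_2(U)\subset X$, hence $U$ a plane in $X$, hence $X$ rational or covered by lines, a contradiction. Nothing in your plan addresses rank-drop loci, and your own cases $r=0,1$ are explicitly left as open ``technical work,'' so the heart of the argument is missing.

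The second gap is the treatment of the base locus off $X$: the assertion that a component $V\not\subset X$ ``can be excluded numerically'' is unsupported, and the dangerous case is not a stray low-dimensional component but a codimension-$2$ component $Y$ of the intersection of the cubics with $X\subset Y$; no comparison of $h^0$'s rules this out, since by definition such a $Y$ changes no cohomological count. The paper needs genuine input at both points: when $X$ is a component of the intersection, the refined Bezout/excess intersection computation $(C_1.\dots.C_6)^X=3^6-1$ shows the residual locus is at most a single point, and the strong Bertini theorem of Diaz--Harbater (applied, via Lemma \ref{l3}, to a general codimension-$2$ linear section $C\subset\PP^4$) gives smoothness of the general cubic away from finitely many points; when instead a fourfold $Y\supset X$ occurs, codimension-$3$ linear sections together with $h^0(\mathcal{I}_F(2))=0$ force a curve $D\supset F$ of degree $\leq 3$ in $\PP^3$, contradicting the no-quadric hypothesis. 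As it stands your proposal is a reasonable opening move (the conormal-bundle reformulation is a genuinely different packaging from the paper's linear-section approach), but both central difficulties are unresolved, so the proof is incomplete.
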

Before we pass to the proof of Proposition \ref{l1}, let us prove the following.
  \begin{lemm}\label{l3} Suppose that a Calabi--Yau threefold in $\mathbb{P}^6$
is not contained in any quadric. Let $X\supset S \supset C \supset F$ be general
linear sections of $X$ of codimension 1,2,3 respectively.
Then, we have
$h^0(\mathcal{I}_X(3))=h^0(\mathcal{I}_S(3))=h^0(\mathcal{I}_C(3))=h^0(\mathcal{
I}_F(3))$ and
$0=h^0(\mathcal{I}_X(2))=h^0(\mathcal{I}_S(2))=h^0(\mathcal{I}
_C(2))=h^0(\mathcal{I}_F(2))$.
  \end{lemm}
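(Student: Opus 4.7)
The overall strategy is a standard hyperplane-section propagation argument: iterate the short exact sequence
$$0\to \mathcal{I}_Y(k-1)\xrightarrow{\cdot h}\mathcal{I}_Y(k)\to \mathcal{I}_{Y'}(k)\to 0$$
for $Y=X,S,C$ and $Y'$ the next general hyperplane section, and read off the desired $h^0$-equalities from long exact sequences in cohomology. To make this work I need to control the connecting $H^1$'s at each step, which are exactly the linear-normality and $h^1(\mathcal{I}(2))=0$ vanishings, both of which I can extract from Calabi--Yau and Kodaira vanishing on $X$.

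For the $k=2$ line first. Nondegeneracy of $X$ (and hence of each general hyperplane section in its own ambient projective space) gives $h^0(\mathcal{I}_Y(1))=0$ throughout. The starting assumption is $h^0(\mathcal{I}_X(2))=0$; to propagate it to $S$, I need $h^1(\mathcal{I}_X(1))=0$, which is Proposition \ref{pro linearly normal}. To propagate further to $C$ and $F$ I need $h^1(\mathcal{I}_S(1))$ and $h^1(\mathcal{I}_C(1))$ to vanish, i.e.\ linear normality of the successive hyperplane sections. These I would derive inductively from the sequence $0\to\mathcal{I}_Y(k-1)\to\mathcal{I}_Y(k)\to\mathcal{I}_{Y'}(k)\to 0$ at $k=1$, using that $H^2(\mathcal{I}_X)=H^1(\mathcal{O}_X)=0$ by the Calabi--Yau condition, and the analogous $H^2(\mathcal{I}_S)=H^1(\mathcal{O}_S)=0$ coming from $H^1(\mathcal{O}_X(-1))=H^2(\mathcal{O}_X(-1))=0$ (Kodaira on the ample $\mathcal{O}_X(1)$). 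Chaining these gives the four vanishings $h^0(\mathcal{I}_Y(2))=0$.

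For the $k=3$ equalities the same SES at $k=3$ yields $h^0(\mathcal{I}_Y(3))=h^0(\mathcal{I}_{Y'}(3))$ provided $h^0(\mathcal{I}_Y(2))=0$ (already proven) and $h^1(\mathcal{I}_Y(2))=0$. For $Y=X$, the Riemann--Roch computation recalled immediately before the lemma gives $h^1(\mathcal{I}_X(2))=h^0(\mathcal{I}_X(2))=0$. To pass from $X$ to $S$, $S$ to $C$, $C$ to $F$, I would use the SES at $k=2$ and the vanishings $H^2(\mathcal{I}_X(1))=H^1(\mathcal{O}_X(1))=0$, and analogously $H^2(\mathcal{I}_S(1))=H^1(\mathcal{O}_S(1))=0$, $H^2(\mathcal{I}_C(1))=H^1(\mathcal{O}_C(1))=0$, each of which reduces via the usual ideal sheaf sequence to a Kodaira-type vanishing already established in the Calabi--Yau step.

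The main obstacle is bookkeeping rather than any substantial geometric step: one must check that at every descent the cohomological input ($h^i(\mathcal{O}_Y(m))=0$ for suitable $i,m$) is really available on the lower-dimensional section, and not just on $X$ itself. In my plan these follow systematically from the two inputs $H^1(\mathcal{O}_X)=0$ and Kodaira vanishing for $\mathcal{O}_X(-1)$, $\mathcal{O}_X(1)$, propagated down through the hyperplane section sequences. Once all these auxiliary vanishings are in place, the $h^0$-equalities of the lemma are immediate.
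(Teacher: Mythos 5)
Your proposal is correct and follows essentially the same route as the paper: a long-exact-sequence chase along $0\to\mathcal{I}_Y(k-1)\to\mathcal{I}_Y(k)\to\mathcal{I}_{Y'}(k)\to 0$ for the successive linear sections, fed by nondegeneracy, linear normality (Proposition \ref{pro linearly normal}), the Riemann--Roch identity $h^1(\mathcal{I}_X(2))=h^0(\mathcal{I}_X(2))=0$, and the Calabi--Yau/Kodaira vanishings $h^i(\mathcal{I}_X(k))=0$ for $i=2,3$. One caveat: your auxiliary claim $H^2(\mathcal{I}_C(1))=H^1(\mathcal{O}_C(1))=0$ is false, since $C$ is a curve with $\omega_C=\mathcal{O}_C(2)$, so $h^1(\mathcal{O}_C(1))=h^0(\mathcal{O}_C(1))\geq 5$; fortunately it is never needed, because the last descent from $C$ to $F$ only requires $h^1(\mathcal{I}_C(1))=0$ and $h^1(\mathcal{I}_C(2))=0$, and these follow from the vanishings already established at the level of $S$ (namely $h^1(\mathcal{I}_S(1))=h^1(\mathcal{I}_S(2))=h^2(\mathcal{I}_S)=h^2(\mathcal{I}_S(1))=0$), exactly as in the rest of your chain.
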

  \begin{proof} First, we have $h^i(\mathcal{I}_{X}(k))=0$ for $i=2,3$ and
$k\in\mathbb{Z}$. Furthermore, we assumed $0=h^0(\mathcal{I}_X(m))=h^1(\mathcal{I}_X(m))$
for $m=1,2$.
  From the exact sequence
$$0\to\mathcal{I}_X\to\mathcal{I}_X(1)\to\mathcal{I}_S(1)\to 0$$ tensorized by
line bundles, we infer: 
\begin{itemize}
 \item $h^i(\mathcal{I}_S(m))=0$ for $i=0,1,2$ and $m=0,1$;
\item $h^i(\mathcal{I}_S(2))=0$ for $i=1,2$; 
\item $H^0(\mathcal{I}_X(3))\simeq H^0(\mathcal{I}_S(3))$.
\end{itemize}

 We conclude by repeating this
procedure for a hyperplane section and a codimension $2$ linear section.
  \end{proof}
We can now pass to the proof of Proposition \ref{l1}
\begin{proof}[Proof of Proposition \ref{l1}] We keep the notation from Lemma
\ref{l3}.
By Lemma \ref{l3}, it is enough to prove that the general cubic hypersurface in
$\mathbb{P}^4$ containing $C$ is smooth.
For this, we need to study the scheme-theoretic intersection of all cubics
containing $C$ or more generally the scheme-theoretic intersection of cubics
containing $X$.
Let $\Lambda$ be the scheme-theoretic intersection of the cubics
containing $X$.
Then either $X$ is a component of $\Lambda$ (necessarily of multiplicity $1$ for
degree reason), or $\Lambda$ has a component
$Y\subset\mathbb{P}^6$ of codimension $2$ such that $X\subset Y$.

In the first case, we use the excess
 intersection formula. We choose $C_1,\dots,C_6$ generic cubics containing $X$.
 Then we find $(C_1.\dots.C_6)^X$ the equivalences of $X$ in the intersection
$C_1.\dots.C_6$.
  Indeed, from \cite[prop.~9.1.1]{Ful}, we infer
$$(C_1.\dots.C_6)^X=(\sum_{i=1}^{6}c(N_{C_i|\mathbb{P}^6}|_X) c(T_{\mathbb{P}^6}
|_X)c(T_X))_0=c_3+15h^3-11h.c_2=3^6-1$$ that is an element from $A_0(X)$.
  The equivalences of all the distinguished components \cite[\& 6.1]{Ful} of
$C_1\cap\dots \cap C_6$ sum up to $C_1.\dots.C_6$. Moreover, by the refined
Bezout theorem \cite[thm.~12.3]{Ful}, the equivalences of the distinguished
components are positive numbers of degree bigger than the degree of this
component and all the irreducible components of $\bigcap C_i$ are among the
distinguished components.
  It follows that there can be at most one such component whose support is not
contained in $X$ and this component have degree $1$ with multiplicity $1$ in the
intersection of $C_i$ for $1\leq i\leq 6$. Since the equivalences of linear
spaces of dimension $\geq 1$ in the intersection of cubics are bigger than 2, we
deduce that there can be only one point outside $X$ in the intersection
$\bigcap_{i=1}^6 C_i$.

 Let us now consider in this case a general codimension $2$ linear section $C$
of $X\subset
\mathbb{P}^6$. We can choose it not to pass through the additional point. We
then have that $C$
is set theoretically defined by the cubics containing it. Moreover, $C$ is a
component of
multiplicity one of the scheme theoretical intersection of these cubics. It
follows that $C$ is
scheme theoretically defined by the cubics containing it outside possibly a
finite set $P$ of point on $C$.
Then we know from \cite[thm 2.1]{DH} that the generic cubic containing $C$ is
smooth outside the
set $P$.
  Suppose that there is a point from $P$ such that the generic cubic containing
$C$ is
singular in it. Since the choice of the codimension 2 linear section giving $C$
was generic, it means that the intersection of the
singular loci of all cubics containing $X$ contains a surface $U$. Then each
cubic containing $X$ must contain the secant variety $\mathfrak{s}_2(U)$ of the surface
$U$.
As we already saw that set theoretically the intersection defines $X$ plus
possibly one point we infer $\mathfrak{s}_2(U)\subset X$. Since $X$ is a Calabi--Yau
threefold, we have $\mathfrak{s}_2(U)\neq X$ and, hence, $\mathfrak{s}_2(U)$ must be a surface.
It follows that $U$ is a plane contained in $X$. But then the fiber of the
projection from $U\subset \mathbb{P}^6\to \mathbb{P}^3$ intersects the cubics
containing $X$ in linear spaces outside $U$. Thus either $X$ is rational or $X$
is covered by lines. This is a contradiction in any case.

  Assume now that $\Lambda$ has a component $Y\subset\mathbb{P}^6$ of
codimension $2$ such that $X\subset Y$.
We still keep the notation from Lemma \ref{l3}.
Observe that, in this case, all the cubics in $\mathbb{P}^3$ containing $F$
contain also a fixed curve
$D\subset \mathbb{P}^3$ being the codimension $3$ linear section of $Y$. Moreover,
since $X\subset Y$, we have $F\subset D$.
By Lemma \ref{l3}, the projective linear system of cubics containing $D$ is of
dimension at least 6. Moreover, since $F$ is not contained in any quadric,
the restrictions of these cubics to a general hyperplane in $\mathbb{P}^3$ form
a projective linear system of cubics on $\mathbb{P}^2$ of the same dimension 6.
It follows that the intersection of $D$ with a generic hyperplane is a scheme of
length at most 3. Hence, $D$ is a curve of degree 3 in $\mathbb{P}^3$.
Such a curve is either contained in a hyperplane or in a quadric hypersurface.
The latter is a contradiction with $F\subset D$ and $h^0(\mathcal{I}_F(2))=0$.
\end{proof}

\begin{cor} A Calabi--Yau threefold of degree $14$ in $\mathbb{P}^6$ that is not
contained in any quadric is defined by the $6\times 6$ Pfaffians of a skew
$7\times 7$ matrix with linear entries.
\end{cor}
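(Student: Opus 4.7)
The plan is to reduce the assertion to Theorem~\ref{Buchsbaum CY} by showing that $X$ is arithmetically Cohen--Macaulay, whereupon the classification, together with Corollary~\ref{stopnia 14 w kwadryce} to eliminate the alternative degree-$14$ family, forces $X$ into case (1)(f).

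First, I would verify that $X$ is aCM, i.e.\ $h^1(\mathcal{I}_X(k))=0$ for all $k$ (the vanishing $h^2_*(\mathcal{I}_X)=0$ is automatic from Kodaira vanishing and the Calabi--Yau hypothesis). We already have $h^1(\mathcal{I}_X(k))=0$ for $k\le 0$ (trivially), for $k=1$ (Proposition~\ref{pro linearly normal}), and for $k=2$ (from the hypothesis $h^0(\mathcal{I}_X(2))=0$ via Riemann--Roch together with Kodaira). For $k\ge 3$ I would pass to a general codimension-two linear section $C\subset \mathbb{P}^4$, which by Lemma~\ref{l3} is a smooth curve of degree $14$ and genus $15$ (as $\omega_C=\mathcal{O}_C(2)$), not contained in any quadric, and satisfying $h^0(\mathcal{I}_C(k))=h^0(\mathcal{I}_X(k))$ for $k\le 3$.

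The crucial step is to show $h^0(\mathcal{I}_C(3))=7$ on the nose, which forces $h^1(\mathcal{I}_C(3))=0$ via the Hilbert-function computation on $\mathbb{P}^4$. Squeezing down from the a priori bound $h^0(\mathcal{I}_C(3))\ge 7$ is extracted from Proposition~\ref{l1} and, more precisely, from its proof: the generic cubic containing $X$ has singular locus of dimension $\le 1$, and the excess-intersection bookkeeping shows that six general cubics meet scheme-theoretically in $X$ plus at most a single reduced embedded point. Once $h^1(\mathcal{I}_C(3))=0$ is established, the standard propagation through the exact sequences $0\to\mathcal{I}_S(k-1)\to\mathcal{I}_S(k)\to\mathcal{I}_C(k)\to 0$ and its analog relating $X$ and a general hyperplane section $S$ (exactly as in the proof of Corollary~\ref{cor 1}), combined with the easy vanishings $h^2(\mathcal{I}_S(k))=h^1(\mathcal{O}_S(k))=0$ and $h^2(\mathcal{I}_X(k))=h^1(\mathcal{O}_X(k))=0$, yields $h^1(\mathcal{I}_X(k))=0$ for all $k\ge 3$.

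With aCM in hand, Corollary~\ref{Cor from Buchsbaum CY} places $X$ in case~(1) of Theorem~\ref{Buchsbaum CY}; the only degree-$14$ subcase is (1)(f), giving $E=7\mathcal{O}_{\mathbb{P}^6}$, $u=3$, and $X=\Pf(\sigma)$ for some $\sigma\in H^0(\bigwedge^2(7\mathcal{O}_{\mathbb{P}^6})(1))$. Unwinding the Pfaffian construction, this says precisely that $X$ is cut out by the $6\times 6$ Pfaffians of a skew $7\times 7$ matrix of linear forms. The other degree-$14$ candidate, case (2)(a), is excluded independently by Corollary~\ref{stopnia 14 w kwadryce}, since its members sit in a quadric, contrary to our hypothesis.

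The principal obstacle is the equality $h^0(\mathcal{I}_X(3))=7$: the lower bound came for free from Riemann--Roch, but the upper bound requires extracting the full strength of Proposition~\ref{l1} and the subtle residual analysis in its proof (the dichotomy between $X$ being a component of $\Lambda=\bigcap\{C\supset X:\deg C=3\}$ and $\Lambda$ admitting an extra codimension-$2$ component through $X$). This is where the hypothesis that $X$ is not contained in any quadric enters decisively, since without it the bound could fail and one could end up in the degenerate case (2)(a) instead.
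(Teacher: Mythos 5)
Your overall strategy (prove $X$ is aCM, then invoke Corollary \ref{Cor from Buchsbaum CY} and read off the degree-$14$ case of Theorem \ref{Buchsbaum CY}) is the same as the paper's, and the endgame is fine. The gap is in the crucial step: you never actually establish $h^0(\mathcal{I}_C(3))=7$, equivalently $h^1(\mathcal{I}_C(3))=0$. You propose to extract the upper bound $h^0(\mathcal{I}_C(3))\leq 7$ from the excess-intersection analysis in the proof of Proposition \ref{l1}, but that analysis only controls the \emph{base locus} of the system of cubics through $X$ (six general members meet in $X$ plus at most one point, and the generic member has small singular locus); it gives no information whatsoever about the \emph{dimension} of that linear system. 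A linear system of cubics of projective dimension larger than $6$ could perfectly well have six general members intersecting in $X$ plus a point, so there is no implication from the bookkeeping you cite to $h^1(\mathcal{I}_X(3))=0$, and your ``Hilbert-function computation'' is then circular. A secondary gap: even granting $h^1(\mathcal{I}_C(3))=0$, your ``standard propagation'' through the sequences \ref{exactXS}--\ref{exactSC} needs $h^1(\mathcal{I}_C(k))=0$ for \emph{all} $k$ (that is how Corollary \ref{cor 1} runs), and vanishing at a single twist does not propagate upward for free for a curve of degree $14$ and genus $15$ in $\mathbb{P}^4$.

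The paper fills exactly this hole by a different mechanism, and this is where Proposition \ref{l1} is really used: it produces a \emph{smooth} cubic threefold $W\supset C$, on which the Serre construction turns $C$ into the zero locus of a section of a rank-$2$ bundle $\mathcal{E}$ with $c_1(\mathcal{E}(-2))=0$ and $H\cdot c_2(\mathcal{E}(-2))=2$; the hypothesis that $X$ lies on no quadric forces $\mathcal{E}(-2)$ to be stable, and Druel's vanishing theorem \cite{Dr} for such stable bundles on a cubic threefold gives $h^1(\mathcal{E}(n))=0$ for all $n$, hence $h^1(\mathcal{I}_{C|\mathbb{P}^4}(n))=0$ for all $n$ at once, after which the Lemma \ref{l3}-type propagation legitimately yields that $X$ is aCM. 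So your proposal is not a correct alternative route as written: to repair it you would need an independent argument for the full vanishing $H^1_*(\mathcal{I}_C)=0$ (or at least $h^1(\mathcal{I}_C(k))=0$ for all $k\geq 3$), and the only input you invoke cannot supply it.
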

\begin{proof}
Let us consider a generic codimension $2$ linear section $C$ of $X\subset
\mathbb{P}^6$.
From Proposition \ref{l1}, the curve $C$ is contained in a smooth cubic
threefold $W$.
Let $H$ be the class of the hyperplane section of $W$. Since $K_C=2H$ and
$K_T=-2H$, we deduce that $C\subset W$ is subcanonical.
From the Kawamata-Viehweg vanishing theorem, we have $h^1(\mathcal{O}_W(-4H))=0$
and $h^2(\mathcal{O}_T(-8H))$. Thus we can apply the Serre construction and find
a rank $2$
vector bundle $\mathcal{E}$ on $W$ with a section vanishing along $C\subset W$.
More precisely, we obtain an exact sequence \begin{equation}\label{e1}
0\to\mathcal{O}_W\to \mathcal{E}\to \mathcal{I}_{C|W}(4H)\to 0 \end{equation} and
$c_1(\mathcal{E})=4H$
and $H\cdot c_2(\mathcal{E})=14$. We compute that $c_1(\mathcal{E}(-2))=0$ and $H\cdot c_2(\mathcal{E}(-2))=2$.
Moreover, $\mathcal{E}$
is stable if and only if  $h^0(\mathcal{E}(-2))=0$.
From the exact sequence (\ref{e1}) tensorized by $\oo(-2H)$, we deduce that
$h^0(\mathcal{E}(-2))\neq 0$ implies that $h^0(\mathcal{I}_{C|W}(2))\neq 0$. This contradicts the fact that $C$ is not contained in any quadric. Thus $\mathcal{E}$ is stable.
By Serre duality, $0=h^2(\mathcal{E}(-2))=h^0(\mathcal{E}(-3))$. It is proven in \cite[thm.~2.4]{Dr}
that if $\mathcal{E}(-2)$ is stable then $h^i(\mathcal{E}(-1-i))=0$ for $i\geq 1$. It follows that
$h^1(\mathcal{E}(n))=0$ for $n\in \mathbb{Z}$. Now, from the long exact sequence of cohomology associated
to the exact sequence (\ref{e1}), we infer $h^1(\mathcal{I}_{C|W}(4-k))\leq
h^2(\mathcal{O}_W(-k))=h^1(\mathcal{O}_W(-2+k))$. From the Kodaira vanishing
theorem, the last number is $0$ so $h^1(\mathcal{I}_{C|W}(k)))=0$. Next, from
the long exact sequence of cohomology associated to the sequence $$0\to
\mathcal{O}_{\mathbb{P}^4}(-3)\to \mathcal{I}_{C|\mathbb{P}^4}\to
\mathcal{I}_{C|W}\to 0,$$ we obtain $$h^1(\mathcal{I}_{C|\mathbb{P}^4}(k)))=0.$$
Finally, arguing as in Lemma \ref{l3} we infer
$h^1(\mathcal{I}_{X}(k))=0$ for $k\in \mathbb{Z}$ thus $X\subset \mathbb{P}^6$
is aCM. The assertion follows from Corollary \ref{Cor from Buchsbaum CY}.
\end{proof}
\section{Classification up to deformations} \label{sec examples}
By the classification of Section \ref{sec classification} for $d\leq 13$, the Hilbert scheme of Calabi--Yau threefolds of degree $d$ has a unique irreducible component.
In this section, we prove that the statement is also valid for $d=14$.
To do this, we compare the families of Calabi--Yau threefolds of degree $14$ in $\mathbb{P}^6$ appearing in the classification above.
We show that all such varieties are smooth degenerations of the family of
Calabi--Yau threefolds of degree $14$ defined by the $6\times 6$ Pfaffians of an
alternating $7\times 7$
matrix, hence, they are in the same component of the Hilbert scheme.

Denote by $\mathfrak{T}_{14}$ the family of Calabi-Yau threefold defined by $6\times 6$ Pfaffians of
$7\times7$ matrices of linear forms.
We observed that the general element $T_{14}\in \mathfrak{T}_{14}$ is not contained
in any quadric.
On the other hand, we have two families $\mathfrak{C}_{14} \subset \mathfrak{B}_{14}$ of Calabi--Yau threefolds contained in a quadric.

We compute the dimension of $\mathfrak{C}_{14}$ using Bott formula.
From the computation and Corollary \ref{C14 contained in B14}, it follows that the dimension of the component of the
Hilbert scheme of Calabi--Yau threefolds of degree 14 in $\mathbb{P}^6$
containing $\mathfrak{B}_{14}$ is bigger than the dimension of
$\mathfrak{B}_{14}$.
In fact, we prove the following.

\begin{thm}\label{smooth degeneration of degree 14} For a generic Calabi-Yau
threefold $B_{14}$ belonging to
$\mathfrak{B}_{14}$, there exists a smooth morphism
$\mathbb{P}^6\times \Delta \supset
\mathcal{X}\rightarrow \Delta$ to the complex disc $\Delta$ such that
for $\lambda\neq 0$ the fiber $\mathcal{X}_{\lambda}$ is a smooth subvariety in
$\mathbb{P}^6$ belonging to the family $\mathfrak{T}_{14}$ whereas the central
fiber $\mathcal{X}_{0}=B_{14}$.
\end{thm}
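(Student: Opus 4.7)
The overall strategy is to interpolate between $E_{14} = \Omega^1_{\mathbb{P}^6}(1) \oplus \mathcal{O}_{\mathbb{P}^6}(1)$ and the trivial bundle $7\mathcal{O}_{\mathbb{P}^6}$ through the one-parameter family of extensions of $\mathcal{O}_{\mathbb{P}^6}(1)$ by $\Omega^1_{\mathbb{P}^6}(1)$, to propagate along this family the section defining $B_{14}$, and then to invoke Proposition \ref{degenerations of pfaffians} to control the resulting family of Pfaffian varieties. Since
\[\Ext^1_{\mathbb{P}^6}(\mathcal{O}_{\mathbb{P}^6}(1), \Omega^1_{\mathbb{P}^6}(1)) \cong H^1(\mathbb{P}^6, \Omega^1_{\mathbb{P}^6}) \cong \mathbb{C},\]
multiplication of the Euler class by the coordinate $\lambda$ on $\Delta$ yields a rank $7$ bundle $\mathcal{E}$ on $\mathbb{P}^6 \times \Delta$ fitting into an exact sequence
\[0 \to p^*\Omega^1_{\mathbb{P}^6}(1) \to \mathcal{E} \to p^*\mathcal{O}_{\mathbb{P}^6}(1) \to 0,\]
where $p:\mathbb{P}^6\times \Delta \to \mathbb{P}^6$ denotes the first projection, such that $\mathcal{E}_0 = E_{14}$ while $\mathcal{E}_\lambda \cong 7\mathcal{O}_{\mathbb{P}^6}$ for every $\lambda \neq 0$.

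Next I would extend the defining section. Taking the second exterior power and twisting by $\mathcal{O}(1)$, Bott's formulas give $h^1(\bigwedge^2 \mathcal{E}_\lambda(1)) = 0$ and $h^0(\bigwedge^2 \mathcal{E}_\lambda(1)) = 147$ for every $\lambda \in \Delta$. Hence $\pi_*(\bigwedge^2 \mathcal{E}(1))$ is locally free of rank $147$ on $\Delta$ (with $\pi$ the projection to $\Delta$), and the section $\sigma_0 \in H^0(\bigwedge^2 E_{14}(1))$ satisfying $\Pf(\sigma_0) = B_{14}$ extends to a section $\sigma$ over $\Delta$; since any two such extensions differ by $\lambda$ times an arbitrary nearby section, $\sigma$ may be chosen generically with this boundary value. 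Setting $\mathcal{X} = \Pf(\sigma) \subset \mathbb{P}^6 \times \Delta$, one has $\mathcal{X}_0 = B_{14}$ by construction, while for $\lambda \neq 0$ the fiber $\mathcal{X}_\lambda$ is a Pfaffian variety associated to $7\mathcal{O}_{\mathbb{P}^6}$, hence a member of $\mathfrak{T}_{14}$ as soon as it has the expected codimension. Flatness of $\mathcal{X} \to \Delta$ will follow from constancy of the Hilbert polynomial, which by the Pfaffian resolution (\ref{pfaffian sequence}) depends only on the numerical invariants of $\mathcal{E}_\lambda$; smoothness of the generic fiber $\mathcal{X}_\lambda$ with $\lambda \neq 0$ follows from a Bertini argument using that $\bigwedge^2(7\mathcal{O}_{\mathbb{P}^6})(1) = 21\mathcal{O}_{\mathbb{P}^6}(1)$ is globally generated.

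The main obstacle is ensuring that the degeneracy locus of the skew morphism associated to $\sigma$ does not drop in codimension for $\lambda$ near $0$, and that the total space $\mathcal{X}$ is smooth along the central fiber $B_{14}$, so that after shrinking $\Delta$ the morphism $\mathcal{X}\to \Delta$ is smooth outside a set disjoint from $\{0\}$. This is precisely the content of Proposition \ref{degenerations of pfaffians}, whose hypotheses must be verified in the present setting: the combination of generic choice of $B_{14} \in \mathfrak{B}_{14}$, permitted by the statement of the theorem, together with the freedom in choosing the extension $\sigma$, should be sufficient to meet them and thus conclude the proof.
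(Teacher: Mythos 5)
Your construction (the relative Euler extension $\mathcal{E}$ on $\mathbb{P}^6\times\Delta$ with class $\lambda e$, plus the Bott/base-change extension of $\sigma_0$) is sensible, and it even makes the identification $\mathcal{X}_0=B_{14}$ immediate; but the decisive step is missing. Everything hinges on showing that $\Pf(\sigma)\to\Delta$ is flat and that the fibers near $\lambda=0$ have codimension $3$ (and are smooth), and you defer exactly this to Proposition \ref{degenerations of pfaffians}. That proposition, however, is not about your family: it concerns a pencil $\sigma'_1+\lambda\sigma'_2$ of sections of the \emph{fixed} bundle $\bigwedge^2(7\mathcal{O}_{\mathbb{P}^6})(1)$, produced from the surjectivity of $\delta\colon H^0(\bigwedge^2 F(1))\to H^0(E(2))$ (here guaranteed by $H^1(\Omega^2_{\mathbb{P}^6}(3))=0$), and its conclusion is only flatness of \emph{that} family, obtained by taking the component of the degeneracy locus dominating $\Delta$ and identifying its central fiber with $\Pf(\sigma)$ by an explicit wedge-power computation. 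It asserts nothing about non-dropping of the fiber codimension for small $\lambda\neq 0$, nothing about smoothness, and nothing about a family of sections of a varying bundle; so it cannot be invoked as a black box to repair your construction unless you either identify your family with the proposition's one (pushing $\sigma$ into $p^*(7\mathcal{O})$ and comparing with $\sigma'_1+\lambda\sigma'_2$) or redo the flatness argument yourself. Your own suggested route (Hilbert-polynomial constancy via the resolution \ref{pfaffian sequence}) presupposes precisely the expected-codimension statement you flag as the obstacle; it could be closed, e.g.\ by upper semicontinuity of fiber dimension plus properness over $\Delta$ (the central fiber is pure of dimension $3$), followed by the Buchsbaum--Eisenbud resolution with constant Chern data, but none of this is carried out.

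A second, smaller error: Bertini cannot give smoothness of the specific fibers $\mathcal{X}_\lambda$, since the sections occurring in your family are not generic sections of $21\mathcal{O}_{\mathbb{P}^6}(1)$ --- they are constrained to degenerate to (the image of) $\sigma_0$. The correct argument, and the one the paper uses after Proposition \ref{degenerations of pfaffians}, is that smoothness is open in a proper flat family: once flatness is known and $B_{14}$ is smooth (generic member of $\mathfrak{B}_{14}$), the morphism is smooth in a neighbourhood of the central fiber, and after shrinking $\Delta$ all fibers are smooth degree-$14$ Pfaffians of $7\times 7$ matrices of linear forms, hence members of $\mathfrak{T}_{14}$. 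In short, you have correctly reduced the theorem to the technical heart of the paper's argument, but that heart --- the content of Proposition \ref{degenerations of pfaffians} together with the openness-of-smoothness step --- is neither proved nor correctly cited in your proposal.
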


The theorem is a straightforward consequence of the Euler sequence and the
following Proposition \ref{degenerations of pfaffians}.

To formulate the proposition let us start with two vector bundles $E$, $F$ of ranks $2u$ and $2u+1$ on
$\mathbb{P}^6$ forming an exact
sequence.
\begin{equation}
 0\rightarrow E\rightarrow F\rightarrow \mathcal{O}_{\mathbb{P}^6}(1)\rightarrow
0
\end{equation}
 Consider the exact sequence relating wedge squares of the bundles tensorized by
$\mathcal{O}_{\mathbb{P}^6}(1)$:
  \begin{equation}
   0\rightarrow (\bigwedge^2 E)(1)\rightarrow (\bigwedge^2 F)(1)\rightarrow
E(2)\rightarrow 0,
  \end{equation}
and its associated cohomology sequence:
  \begin{equation}
   0\rightarrow H^0((\bigwedge^2 E)(1))\xrightarrow{\eta} H^0((\bigwedge^2
F)(1))\xrightarrow{\delta} H^0(E(2))\rightarrow H^1((\bigwedge^2 E)(1)).
  \end{equation}
 Assume that the map $\delta$ is surjective.
 Assume moreover that the Pfaffian varieties $X_{\sigma}$ and $X_{\sigma'}$ associated
to generic sections $\sigma\in H^0((\bigwedge^2 E)(1)\oplus
\mathcal{O}_{\mathbb{P}^6}(1))$
 and $\sigma'\in H^0((\bigwedge^2 F)(1))$ are irreducible of codimension 3 as
expected.

\begin{prop}\label{degenerations of pfaffians}
 Let $E$,$F$ be vector bundles as above. Then for a generic section $s\in
H^0((\bigwedge^2 (E \oplus \mathcal{O}_{\mathbb{P}^6}(1)))(1))$, there exists a
family of
 sections $\sigma'_{\lambda}\in H^0((\bigwedge^2 F)(1))$ parametrized by $\lambda\in
\mathbb{C}\setminus\{0\}$ such that the family
 \begin{displaymath}
 X_\lambda=\begin{cases}\operatorname{Pf}(\sigma'_\lambda) \quad \text{ for}
\lambda\neq 0\\

            \operatorname{Pf}(\sigma) \quad
\text{ for} \lambda= 0
           \end{cases}
\end{displaymath}
is a flat family of 3-dimensional subvarieties of $\mathbb{P}^6$.
\end{prop}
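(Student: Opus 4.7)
The plan is to construct, starting from the extension class of $F$, a flat family of rank $2u+1$ bundles $\mathcal{F}$ over $\mathbb{P}^6\times\mathbb{A}^1$ interpolating between $F$ for $\lambda\ne 0$ and $E\oplus\mathcal{O}_{\mathbb{P}^6}(1)$ at $\lambda=0$, then to build a section of $\bigwedge^2\mathcal{F}(1)$ whose restriction at $\lambda=0$ equals $s$, and finally to take the associated Pfaffian degeneracy locus as the desired family.

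First, I would take the extension class $\epsilon\in \mathrm{Ext}^1(\mathcal{O}_{\mathbb{P}^6}(1),E)=H^1(E(-1))$ of $F$, pull it back by the first projection $p\colon\mathbb{P}^6\times\mathbb{A}^1\to\mathbb{P}^6$, and multiply by $\lambda$ to produce a class $\lambda\cdot p^*\epsilon$ and the corresponding extension
\[0\to p^*E\to \mathcal{F}\to p^*\mathcal{O}_{\mathbb{P}^6}(1)\to 0\]
which restricts to $E\oplus\mathcal{O}_{\mathbb{P}^6}(1)$ at $\lambda=0$ and to an extension isomorphic to $F$ for each $\lambda\ne 0$.

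The second exterior power gives $0\to p^*(\bigwedge^2 E(1))\to \bigwedge^2\mathcal{F}(1)\to p^*E(2)\to 0$. On every fiber the connecting map $H^0(E(2))\to H^1(\bigwedge^2 E(1))$ vanishes (trivially at $\lambda=0$, and by surjectivity of $\delta$ for $\lambda\ne 0$), so $h^0(\bigwedge^2\mathcal{F}_\lambda(1))$ is constant in $\lambda$ and cohomology and base change yields that $\pi_*\bigwedge^2\mathcal{F}(1)$, with $\pi$ the projection to $\mathbb{A}^1$, is locally free and surjects onto the trivial bundle $H^0(E(2))\otimes \mathcal{O}_{\mathbb{A}^1}$. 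Decomposing $s=(\sigma,\tau)$ via the canonical splitting $H^0(\bigwedge^2(E\oplus\mathcal{O}_{\mathbb{P}^6}(1))(1))=H^0(\bigwedge^2 E(1))\oplus H^0(E(2))$, I would lift the constant family $\tau$ to a section $\tilde{\mathcal{T}}$ of $\bigwedge^2\mathcal{F}(1)$, write $\tilde{\mathcal{T}}|_{\lambda=0}=(\sigma_0,\tau)$, and set
\[\mathcal{S}:=\tilde{\mathcal{T}}+(\sigma-\sigma_0),\]
where the correction $\sigma-\sigma_0\in H^0(\bigwedge^2 E(1))$ is added via the natural inclusion $\bigwedge^2 E(1)\hookrightarrow \bigwedge^2\mathcal{F}_\lambda(1)$. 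Then $\mathcal{S}|_{\lambda=0}=s$ by construction, and for each $\lambda\ne 0$ the restriction $\sigma'_\lambda:=\mathcal{S}|_\lambda$ lies in $H^0(\bigwedge^2 F(1))$ after identifying $\mathcal{F}_\lambda\simeq F$.

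To finish, I would let $\mathcal{X}:=\Pf(\mathcal{S})\subset\mathbb{P}^6\times\mathbb{A}^1$ be the $(2u-1)$-st degeneracy locus of the skew map induced by $\mathcal{S}$. By the irreducibility and codimension $3$ hypotheses, both $\mathcal{X}_0=\Pf(s)$ and the generic $\mathcal{X}_\lambda=\Pf(\sigma'_\lambda)$ are equidimensional of codimension $3$ in $\mathbb{P}^6$, hence Cohen--Macaulay with a common Hilbert polynomial prescribed by the equal Chern data of $F$ and $E\oplus\mathcal{O}_{\mathbb{P}^6}(1)$; flatness of $\mathcal{X}\to\mathbb{A}^1$ then follows from the constancy of the Hilbert polynomial on fibers. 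The main obstacle I anticipate is verifying that the affine-linear map $s\mapsto\sigma'_\lambda$ sends generic sections to generic sections of $\bigwedge^2 F(1)$, so that the codimension $3$ hypothesis on $\Pf(\sigma'_\lambda)$ genuinely applies; this should reduce to a dimension count, as source and target share the dimension $h^0(\bigwedge^2 E(1))+h^0(E(2))$, combined with a careful analysis of the freedom in the choice of lift $\tilde{\mathcal{T}}$.
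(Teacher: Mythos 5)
Your plan is sound, but it is a genuinely different route from the paper's, and it is worth seeing what each buys. The paper never degenerates the bundle: it keeps $F$ fixed, sets $\sigma'_\lambda=\eta(\sigma_1)+\lambda\sigma'_2$ with $\delta(\sigma'_2)=\sigma_2$, takes inside $\Delta\times\mathbb{P}^6$ the irreducible component dominating $\Delta$ of the locus $\{\sigma'_\lambda{}^{\wedge u}=0\}$, gets flatness for free (an irreducible scheme dominating a smooth curve is flat over it), and then does the real work at $\lambda=0$: expanding $(\sigma'_1+\lambda\sigma'_2)^{\wedge u}$ and using $0\to(\bigwedge^{2u}E)(u)\to(\bigwedge^{2u}F)(u)\to(\bigwedge^{2u-1}E)(u+1)\to 0$ it shows the limit fiber lies in the common zero locus of $\sigma_1^{\wedge u}$ and $\sigma_1^{\wedge u-1}\wedge\sigma_2$, i.e.\ in $\Pf(\sigma)$, and concludes by irreducibility and equidimensionality. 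You instead rescale the extension class so that $F$ degenerates to $E\oplus\mathcal{O}_{\mathbb{P}^6}(1)$ and carry the section along, which makes the central fiber equal to $\Pf(s)$ on the nose; the price is that flatness is no longer automatic, and your Hilbert-polynomial argument is the legitimate substitute (it works because $c(F)=c(E)c(\mathcal{O}_{\mathbb{P}^6}(1))$ and a codimension-$3$ Pfaffian locus is Cohen--Macaulay with Hilbert polynomial computed from the Buchsbaum--Eisenbud resolution, while flatness over an integral base is equivalent to constancy of the fiberwise Hilbert polynomial, \cite[III, 9.9]{Har}). The obstacle you flag does need to be closed, but it closes quickly: fixing a linear section $L$ of $\delta$, the map $(\sigma,\tau)\mapsto\eta(\sigma)+\lambda L(\tau)$ is, for each fixed $\lambda\neq 0$, a linear isomorphism onto $H^0((\bigwedge^2F)(1))$ (injectivity is immediate, and the dimensions agree precisely because $\delta$ is onto), so a generic $s$ together with a generic choice of lift produces a generic $\sigma'_\lambda$; moreover, for the flatness step itself you do not even need this, since upper semicontinuity of fiber dimension together with the fact that every component of such a Pfaffian degeneracy locus has codimension at most $3$ forces all fibers over a neighbourhood of $0$ to have the expected codimension, and shrinking the base to such a neighbourhood is all that the application in Theorem \ref{smooth degeneration of degree 14} requires. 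One last small point: to speak of an algebraic family $\sigma'_\lambda\in H^0((\bigwedge^2F)(1))$ you should trivialize $\mathcal{F}$ over $\mathbb{C}\setminus\{0\}$ (push out the extension along $\lambda\cdot\mathrm{id}_E$); after doing so your sections agree, up to the choice of lift, with the paper's $\sigma'_1+\lambda\sigma'_2$, so the two constructions differ in the proofs of flatness and of the identification of the central fiber rather than in the family itself.
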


\begin{proof}
 Let $\sigma\in H^0((\bigwedge^2 (E \oplus \mathcal{O}_{\mathbb{P}^6}(1)))(1))$. We
have $\sigma=(\sigma_1,\sigma_2)$ with $\sigma_1\in H^0(\bigwedge^2 E)(1))$ and $\sigma_2\in H^0(E(2))$.
By the assumption made, there exists a pair
 $(\sigma'_1,\sigma'_2)$ of sections $\sigma_1,\sigma_2 \in H^0(\bigwedge^2 F)(1)$ such that $\eta
(\sigma_1)=\sigma'_1$, $\delta(\sigma'_2)=\sigma_2$. Let $\sigma'_{\lambda}=\sigma'_1 +\lambda \sigma'_2\in
H^0((\bigwedge^2 F)(1))$.

Consider the variety $\Delta\times \mathbb{P}^6$ with projection $\pi_{\Delta}$
and $\pi_{\mathbb{P}^6}$.
Consider now the subscheme $\tilde{\mathcal{X}}\subset \Delta\times
\mathbb{P}^6$ defined by the vanishing of the section
$\Delta\times \mathbb{P}^6\ni (\lambda,x)\mapsto
((\lambda,x),\sigma'_{\lambda}(x)^{(\wedge r)}) \in \pi_{\mathbb{P}^6}^*((\bigwedge^2u
F)(u)),$
and consider $\mathcal{X}$ its irreducible component dominating $\Delta$. By
\cite[Prop. 9.7]{Har}, the family $\pi_{\Delta}\colon \mathcal{X}\to \Delta$ is
flat. We clearly see that the fiber $\mathcal{X}_{\lambda}$ for $\lambda\neq 0$
is $X_{\lambda}$.
It is, hence, enough to prove that the fiber $\mathcal{X}_0$ over $0\in \Delta$ is
equal to $X_0$

We have $(\sigma'_1+\lambda \sigma'_2)^{\wedge u}=(\sigma'_1)^{\wedge u}+\lambda\gamma_1$ for some $\gamma_1 \in H^0((\bigwedge^{2u} F)(u))$. It
follows that $\mathcal{X}_0$ is contained in the zero locus
$Z((\sigma_1')^{\wedge u})$ of the section
$(\sigma_1')^{\wedge u}\in H^0(\bigwedge^{2u} F)$. Moreover, by the exact sequence:
\[0\rightarrow (\bigwedge^{2u} E)(u)\rightarrow (\bigwedge^{2u}
F)(u)\xrightarrow{\psi} (\bigwedge^{2u-1} E)(u+1)\to 0,\]
we infer $Z((\sigma_1')^{\wedge u})=Z(\sigma_1^{\wedge u})$.
Furthermore, knowing that
\[\psi((\sigma'_1+\lambda \sigma'_2)^{\wedge u})= \lambda \sigma_1^{\wedge u-1}\wedge \sigma_2+
\lambda^2 \gamma_2 \] for some $\gamma_2\in H^0((\bigwedge^{2u} F)(u))$ 
and that
\[Z((\sigma'_1+\lambda \sigma'_2)^{\wedge u})\subset Z(\psi((\sigma'_1+\lambda \sigma'_2)^{\wedge
u})),\]
we see that
$\mathcal{X}_0\subset Z(\sigma_1^{\wedge u-1}\wedge \sigma_2)$.
Putting everything together, we get:  $\mathcal{X}_0$ is contained in the zero
locus of 
\[(\sigma_1^{\wedge u},\sigma_1^{\wedge u-1}\wedge \sigma_2)\in
H^0((\bigwedge^{2u}(E\oplus\mathcal{O}_{\mathbb{P}^6}(1)))(u)=H^0((\bigwedge^2u
E)(u))\oplus H^0(\bigwedge^{2u-1} E(u+1).\]
The latter zero locus is precisely the variety
$X_0=\operatorname{Pf}((\sigma_1,\sigma_2))$.
It follows that $ \mathcal{X}_0\subset X_0$. But, since
$\pi_{\Delta}|_{\mathcal{X}}\colon \mathcal{X}\to \Delta $ is flat, we know that
$\mathcal{X}_0$ is of codimension 3 in $\mathbb{P}^6$ which implies by
assumption on $X_0$ that $\mathcal{X}_0= X_0$.\end{proof}
\begin{proof}[Proof of Theorem \ref{smooth degeneration of degree 14}]
The Euler sequence gives:
\[0\rightarrow \Omega^1_{\mathbb{P}^6}(1)\rightarrow
7\mathcal{O}_{\mathbb{P}^6}\rightarrow \mathcal{O}_{\mathbb{P}^6}(1)\rightarrow
0.\]
By Proposition \ref{degenerations of pfaffians}, for any $B_{14}\in
\mathfrak{B}_{14}$ we obtain a flat family of manifolds defined as Pfaffians
associated to the bundle $7\mathcal{O}$ degenerating to $B_{14}$.
Since smoothness is an open condition in flat families, we conclude that any
smooth $B_{14}$ is a smooth degeneration of a family of smooth Calabi-Yau
threefolds from $\mathfrak{T}_{14}$.
\end{proof}

\begin{rem} Similarly, we prove using Proposition \ref{degenerations of pfaffians}
that any Calabi--Yau threefold $B_{15}$ from Proposition \ref{Cy stopnia 15} is a flat degeneration of the family of Calabi--Yau
threefolds of degree 15 defined as Pfaffians of the bundle
$\Omega_{\mathbb{P}^6}^1(1)\oplus 3\mathcal{O}_{\mathbb{P}^6}$.
\end{rem}

\vskip2cm

\begin{minipage}{15cm}
Institut f\"ur Mathematik\\
Mathematisch-naturwissenschaftliche Fakult\"at\\
Universit\"at Z\"urich, Winterthurerstrasse 190, CH-8057 Z\"urich\\
\end{minipage}

\begin{minipage}{15cm}
 Department of Mathematics and Informatics,\\ Jagiellonian
University, {\L}ojasiewicza 6, 30-348 Krak\'{o}w, Poland.\\
\end{minipage}

\begin{minipage}{15cm}
Institute of Mathematics of the Polish Academy of Sciences,\\
ul. \'{S}niadeckich 8, P.O. Box 21, 00-956 Warszawa, Poland.\\
\end{minipage}

\begin{minipage}{15cm}
\emph{E-mail address:} grzegorz.kapustka@uj.edu.pl\\
\emph{E-mail address:} michal.kapustka@uj.edu.pl
\end{minipage}
\end{document}